\theoremstyle{plain} 
\newtheorem{theorem}{Theorem}[section]  
\newtheorem{lemma}[theorem]{Lemma}
\newtheorem{proposition}[theorem]{Proposition}
\newtheorem{corollary}[theorem]{Corollary}
\theoremstyle{definition}
\newtheorem{definition}[theorem]{Definition}
\newtheorem*{theoremBehrstock}{Theorem~\ref{thm:Behrstock}}
\newtheorem*{theoremboundedgeodesicimage}{Theorem~\ref{thm:boundedgeodesicimage}}
\let\ssection=\section
\renewcommand{\section}{\setcounter{equation}{0}\ssection}
\newcommand{\BZ}{\mathbb{Z}} 
\newcommand{\R}{\mathbb{R}} 
\newcommand{\mA}{\mathcal{A}}
\newcommand{\co}{\colon\thinspace}
\newcommand{\CC}{ \mathcal{C} }
\newcommand{\SC}{ \mathcal{S} }
\newcommand{\Snosep}{ \mathcal{S}_{nosep} }
\newcommand{\DS}{ \mathcal{DS} }
\newcommand{\DSnosep}{ \mathcal{DS}_{nosep} }
\newcommand{\Out}[1][F_n]{\mathop{\mathrm{Out}}(#1)}
\newcommand{\F}{F_n}
\newcommand{\cv}[1][n]{\mathop{\mathrm{cv}}_{#1}}
\newcommand{\CV}[1][n]{\mathop{\mathrm{CV}}_{#1}}
\newcommand{\vol}[1][t]{\mathrm{vol}_{#1}}
\newcommand{\dvol}[1][t]{\mathrm{vol}'_{#1}}
\newcommand{\core}{\mathcal{C}}
\DeclareMathOperator{\diam}{diam}
\newcommand{\into}{\hookrightarrow}
\begin{document}


\title[Submanifold Projection]{Submanifold Projection}


\author[L.~Sabalka]{Lucas Sabalka}
\address{
  Department of Mathematics and Computer Science\\
  Saint Louis University}
\email{\href{mailto:lsabalka@slu.edu}{lsabalka@slu.edu}}

\author[D.~Savchuk]{Dmytro Savchuk}
\address{
  Department of Mathematics and Statistics\\
  University of South Florida}
\email{\href{mailto:savchuk@usf.edu}{savchuk@usf.edu}}


\begin{abstract}
One of the most useful tools for studying the geometry of the mapping class group has been the subsurface projections of Masur and Minsky.  Here we propose an analogue for the study of the geometry of $\Out$ called submanifold projection.  We use the doubled handlebody $M_n = \#^n S^2 \times S^1$ as a geometric model of $F_n$, and consider essential embedded $2$-spheres in $M_n$, isotopy classes of which can be identified with free splittings of the free group.  We interpret submanifold projection in the context of the sphere complex (also known as the splitting complex).  We prove that submanifold projection satisfies a number of desirable properties, including a Behrstock inequality and a Bounded Geodesic Image theorem.  Our proof of the latter relies on a method of canonically visualizing one sphere `with respect to' another given sphere, which we call a sphere tree.  Sphere trees are related to Hatcher normal form for spheres, and coincide with an interpretation of certain slices of a Guirardel core.
\end{abstract}

\maketitle

\section{Introduction}

\todo{turn off todo, use fullpage package}

The study of the outer automorphism group $\Out$ of a free group $F = F_n$ of rank $n$ has been heavily motivated by the methods and tools from the study of a related family of groups, the mapping class groups of surfaces.  One of the most useful tools in understanding the geometry of the mapping class group is the work of Masur and Minsky \cite{masur_m:curve_complexII} on the hierarchy decomposition of Teichm\"uller geodesics, via the notion of subsurface projection and its relationship with the Harvey curve complex \cite{Harvey:curve_complex81}.  This complex carries a natural action of the mapping class group, is finite dimensional and has infinite diameter, but is not locally finite.  It is however Gromov hyperbolic \cite{masur_m:curve_complexI}, which is one key piece of the hierarchy machinery.

Because of the strong connection between $\Out$ and the mapping class group, analogous complexes have been sought for $\Out$.  Two candidate complexes carrying $Out(F_n)$ actions are quickly shaping up as the most likely analogues:  the \emph{splitting complex} and the \emph{factor complex}.

Algebraically, the \emph{splitting complex} $\SC$ of a free group $F = F_n$ of rank $n \geq 3$ is the complex whose $k$-simplices are conjugacy classes of $(k+1)$-edge free splittings of $\F$.  The \emph{factor complex} $\mathcal{F}$ of $F$ is the complex whose $k$-simplices are conjugacy classes of chains of length $k+1$ in the poset of free factors of $F$ ordered by inclusion.

Both of these complexes are finite dimensional with infinite diameter \cite{kapovich_l:analogues_of_curve_complex,behrstock_bc:intersection_numbers10} and are not locally finite.  Very recently, both have been shown to be hyperbolic as well:  hyperbolicity of the factor complex was first shown by Bestvina and Feighn \cite{bestvina_f:hyperbolicity_of_ff11} and more recently by Kapovich and Rafi \cite{kapovich_r:hyperbolicity_FF12}, while hyperbolicity of the splitting complex was first shown by Handel and Mosher \cite{handel_m:hyperbolicity_of_fs12} and more recently by Hilion and Horbez \cite{hilion_h:hyperbolicity_FS12}.

The definitions given above are algebraic in nature.  Complementing the algebraic approach to these objects and the study of $\Out$ is a topological approach based on using the doubled handlebody as a geometric model for $F_n$, dating back to the work of Whitehead in the 1930s.  Indeed, the first time the splitting complex was studied, by Hatcher \cite{hatcher:homological_stability95}, a topological definition was given.  The second proof of the hyperbolicity of the splitting complex by Hilion and Horbez utilizes this point of view.  This topological approach is a rich and interesting point of view, and one which lends itself well to intuition acquired from the mapping class group.

In this paper we approach the study of $\Out$ from this topological viewpoint.

To indicate precisely what we mean, we need some definitions and notation.  Free splittings of the free group $\F$ can be identified with isotopy classes of essential embedded $2$-spheres in the doubled handlebody $M_n := \#^n S^2 \times S^1$, analogous to the fact that $\BZ$-splittings of a surface group $\pi_1\Sigma_g$ can be identified with isotopy classes of essential embedded $1$-spheres (i.e. simple closed curves) in the surface $\Sigma_g = \#^g S^1 \times S^1$. Let $X$ be a subset of $M_n$. A \emph{sphere system} in $X$ is a finite union of disjointly embedded essential $2$-spheres in $X$ that are pairwise non-homotopic and not boundary parallel.  A sphere system is \emph{simple} in $X$ if each component in its complement is simply connected, and \emph{reduced} if its complement is simply connected (i.e. it is simple and there is only 1 component in the complement).  Note that a reduced sphere system is always simple and has exactly $n$ spheres, and that if $X$ is connected then every simple sphere system contains a reduced sphere system.  More generally, let $X \subset M_n$ be a connected component of the complement of a sphere system.  Then $X$ is homeomorphic to $M_{g,b}$, a compact $3$-manifold obtained from $M_{g}$ for some $g \leq n$ by deleting $b$ open $3$-balls with disjoint closures.  \emph{When we refer to a submanifold of $M_n$ we will always be referring to such a manifold $M_{g,b}$}. By a theorem of Laudenbach \cite{laudenbach:annals73,laudenbach:topologie_book74}, two spheres in $M_{g,b}$ are homotopic if and only if they are isotopic.

Define the \emph{sphere complex} $\SC(M_{g,b})$ to be the simplicial complex whose simplices are isotopy classes of disjoint essential embedded $2$-spheres (that is, sphere systems) in $M_{g,b}$.  This is analogous to the definition of the curve complex $\CC(\Sigma_{g,b})$, where simplices are isotopy classes of disjoint essential embedded copies $1$-spheres (that is, curve systems of simple closed curves) in $\Sigma_{g,b}$.  Via the correspondence between splittings and embedded spheres, $\SC(M_{n,0})$ is isometric to the splitting complex of $F_n$.


Roughly, subsurface projection can be defined as follows.  Let $\Sigma = \Sigma_{g,b}$ be a surface with boundary, and let $X \subset \Sigma$ denote a proper subsurface.  Let $v$ be a vertex of $\CC(\Sigma)$ and let $\gamma$ be a representative of $v$ which intersects $\partial X$ in a minimal number of components.  The projection of $v$ to $\CC(X)$ is defined to be the set of all components of $\gamma \cap X$ up to isotopy (and where we complete resulting arcs to curves in a pre-specified way).  When $X$ is a subsurface which exhausts $\Sigma$, $\CC(X)$ coincides with the intersection of the links of each component of $\partial X$ in $\CC(\Sigma)$, where for any topological spaces $A \subset B$ the subspace $A$ \emph{exhausts} $B$ if the closure of $A$ in $B$ is all of $B$.

Inspired by this topological definition, we define \emph{submanifold projection} $\pi_X(A)$ of a sphere $A$ to a submanifold $X$ of $M_{g,b}$ to be, roughly, the isotopy classes of all innermost components of $\hat A \cap X$ for $\hat A$ homeomorphic to $A$ and intersecting $\partial X$ minimally (see Section \ref{sec:projection} for the details).  Submanifold projection satisfies a number of desirable properties, including being coarsely well-defined, Lipschitz, coarsely surjective, and satisfying the following Behrstock inequality:

\begin{theoremBehrstock}
Let $X, X' \subset Y \subset M_n$ be submanifolds with boundary such that $X$ exhausts $Y$, and let $S$ be an essential embedded sphere in $Y$.  If
    $$d_{X'}([\partial X],[S]) > 3$$
then
    $$d_{X}([\partial X'],[S]) \leq 3,$$
where for a submanifold $Z$ of $Y$ and essential embedded spheres $A$ and $B$ in $Y$ we denote by $d_Z([A],[B])$ the distance between $\pi_Z([A])$ and $\pi_Z([B])$ in the disk and sphere complex corresponding to $Z$, which is quasi-isometric to $\SC(Z)$ (see Section~\ref{sec:complexes}).
\end{theoremBehrstock}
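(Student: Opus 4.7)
I argue by contrapositive: assuming $d_{X'}([\partial X],[S]) > 3$, I construct spheres in $\pi_X([\partial X'])$ and $\pi_X([S])$ at distance at most $3$ in $\SC(X)$. The strategy parallels Behrstock's original argument in the mapping class group setting, with sphere-disk surgery playing the role of arc surgery.

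First, put $\partial X \cup \partial X' \cup S$ in pairwise minimal position in $Y$ using the simultaneous normal-form machinery (sphere trees) developed earlier in the paper. In this position the components of $S \cap X'$, $\partial X \cap X'$, $S \cap X$, and $\partial X' \cap X$ are all disks, and the innermost ones, capped off by innermost disks of the appropriate bounding sphere, realize the four relevant projections. Fix a sphere $\hat\sigma = \sigma \cup D^* \in \pi_{X'}([S])$ with $\sigma$ an innermost disk of $S \cap X'$ and $D^* \subset \partial X'$. The key first claim is that $\sigma$ must intersect $\partial X$ essentially: if $\sigma \cap \partial X = \emptyset$, the exhausting hypothesis forces $\sigma \subset X \cap X'$ (the complement $Y \setminus X$ is a tubular neighborhood of the interior spheres of $\partial X$, which $\sigma$ misses). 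Then $\sigma$ is disjoint from every innermost disk of $\partial X \cap X'$, and a standard surgery on bounding circles in $\partial X'$ gives $d_{X'}(\hat\sigma, \hat\rho) \leq 2$ for a suitable $\hat\rho \in \pi_{X'}([\partial X])$, contradicting the hypothesis.

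Now let $\alpha$ be an innermost circle or arc of $\sigma \cap \partial X$ in $\sigma$, bounding a subdisk $\sigma_0 \subset \sigma$ whose interior misses $\partial X$. Capping $\sigma_0$ with an innermost disk $E \subset \partial X$ (bounded by $\alpha$, or by $\alpha$ together with a sub-arc $\beta \subset \partial \sigma$ when $\alpha$ is an arc) produces a sphere $\hat\sigma_0 \in \pi_X([S])$ whose cap lies on $\partial X$. On the other side, the circles of $\partial X' \cap \partial X$ cut $\partial X$ into regions; choosing one whose disk is nested inside, equal to, or disjoint from $E$, and capping off through $\partial X'$, yields $\hat\rho_0 \in \pi_X([\partial X'])$ at distance at most $3$ from $\hat\sigma_0$ in $\SC(X)$, via a short chain of intermediate disjoint-or-cobounding spheres.

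\emph{Main obstacle.} The delicate part is the final chain-construction step: coordinating the capping disks on $\partial X$ so that the intermediate spheres used in the $\SC(X)$ chain are essential and pairwise disjoint, and handling the worst cases where $\sigma_0$ sits on the ``wrong'' side of $\partial X$ (so $E$ must be pushed across) or where $\alpha$ is an arc (so $\beta \subset \partial\sigma$ must be matched against a sub-arc of $\partial X' \cap \partial X$). The constant $3$, rather than something smaller, reflects these worst cases, directly analogous to the sharpness of Behrstock's bound in the surface setting.
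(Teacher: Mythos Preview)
Your overall contrapositive strategy and the first two steps match the paper exactly: put $\partial X$, $\partial X'$, $S$ in general position with no triple points, take an innermost piece $\sigma = D_S \in \pi_{X'}(S)$, argue it must meet $\partial X$ (else $d_{X'}\le 3$), and then find an innermost disk $\sigma_0 \subset \sigma$ of $S$ cut along $\partial X$. Up to here you and the paper agree.

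The gap is in your final step, and it stems from missing a one-line observation that makes all of your ``main obstacle'' disappear. Since there are no triple points, $\partial X \cap D_S$ consists only of \emph{circles} in the interior of $D_S$ (your arc case never occurs). An innermost such circle bounds your disk $\sigma_0$ entirely inside the interior of $D_S$. But the interior of $D_S$ lies in $X'$ and is disjoint from $\partial X'$. Hence $\sigma_0$ is \emph{already disjoint from all of $\partial X'$}. That means $\sigma_0 \in \pi_X(S)$ is disjoint from every element of $\pi_X(\partial X')$, so $d_X([\sigma_0],\pi_X(\partial X')) \le 1$ in $\DS(X)$, and the diameter bounds on the two projections give $d_X([\partial X'],[S]) \le 1+1+1=3$ immediately. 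There is no need to cap $\sigma_0$ with a disk $E\subset\partial X$, no surgery chain to build, and no coordination of capping disks; the disk $\sigma_0$ itself does the job in $\DS(X)$.

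Two minor remarks. First, you invoke the sphere-tree machinery for minimal position, but in the paper that machinery is developed \emph{after} this theorem; only Hatcher normal form and a small homotopy to kill triple points are needed here. Second, your capping-and-surgery route, even if completed, would be working in $\SC(X)$ rather than $\DS(X)$; since the statement is about $d_X$ in $\DS(X)$, staying with disks is both simpler and more direct.
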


The definitions and proofs of basic facts about projection, including the Behrstock inequality are relatively straightforward, taking up only Section \ref{sec:projection}.  More complicated is the fact that this definition of projection satisfies a Bounded Geodesic Image theorem, which is the main theorem of this paper:

\begin{theoremboundedgeodesicimage}[\textbf{Bounded Geodesic Image}]  Let $S \subset Y\subset M_n$ be an essential nonseparating embedded sphere in a submanifold $Y$ of the doubled handlebody such that $Y$ exhausts $M_n$ and $\SC(Y)$ is hyperbolic.  Let $X := Y - S$.  For any geodesic segment, ray or line $\gamma$ in $\SC(Y)$ such that $\gamma$ does not contain $[S]$, the set $\pi_X(\gamma)$ has uniformly bounded diameter in $\SC(X)$.
\end{theoremboundedgeodesicimage}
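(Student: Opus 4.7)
\emph{Proof plan.} My plan is to adapt the classical Masur--Minsky proof of the Bounded Geodesic Image theorem for the curve complex to the sphere complex setting, using the coarse Lipschitz property of $\pi_X$ from Section~\ref{sec:projection}, the Behrstock inequality (Theorem~\ref{thm:Behrstock}), the $\delta$-hyperbolicity of $\SC(Y)$, and the sphere tree technology developed in this paper.

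By the Lipschitz property, consecutive vertices of $\gamma$ (which are disjoint and neither equal to $[S]$) have projections within a uniform constant $L$ in $\SC(X)$; thus $\pi_X(\gamma)$ is coarsely connected, and the theorem's content is that even long geodesics project with bounded diameter. I would argue by contradiction: suppose $\diam(\pi_X(\gamma))>M$ for a threshold $M$ to be chosen in terms of $\delta$, $L$, and the Behrstock constant. Extract vertices $a,b\in\gamma$ achieving nearly this diameter, and choose a vertex $v$ along the subsegment $[a,b]$ whose projection $\pi_X(v)$ roughly bisects $\pi_X(a),\pi_X(b)$ in $\SC(X)$.

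The crux is to use the sphere tree of $v$ relative to $S$ to produce an auxiliary sphere $S'\subset Y$ satisfying three properties simultaneously: (i) $[S']$ is adjacent to $[S]$ in $\SC(Y)$, i.e., $S'$ is disjoint from $S$; (ii) $[S']$ is adjacent to $v$ in $\SC(Y)$; and (iii) $\pi_X([S'])\approx\pi_X(v)$ in $\SC(X)$. Concretely, the sphere tree canonically identifies an innermost disk component $D$ of $v\cap X$ together with the innermost disk of $S$ that $D$ bounds; capping $D$ off with this disk in $S$ and then pushing off both $v$ and $S$ yields such an $S'$. With $S'$ in hand, apply the Behrstock inequality to $X$ and $X':=Y-S'$ at each of $a,b$: since $d_X(\pi_X(S'),\pi_X(a))$ and $d_X(\pi_X(S'),\pi_X(b))$ both exceed $3$ once $M\gtrsim 6$, we obtain $d_{X'}(\pi_{X'}(S),\pi_{X'}(a))\leq 3$ and the analogous bound for $b$. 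Thus $a$ and $b$ both project within distance $3$ of the fixed vertex $\pi_{X'}([S])=[S]\in\SC(X')$.

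Finally, $\delta$-hyperbolicity closes the argument. The thin triangle on $\{a,b,[S']\}$ in $\SC(Y)$, combined with the $X'$-projection bounds and the fact that $[S']$ is adjacent to both $[S]$ and the on-geodesic vertex $v$, forces $[S]$ to lie within a uniformly bounded distance of $\gamma$; by choosing $M$ sufficiently large, this bound can be driven below $1$, forcing $[S]\in\gamma$ and yielding the contradiction. The main obstacle is the sphere tree construction step: producing a single $S'$ with all three properties (i)--(iii) requires the canonical form of the sphere tree together with a careful analysis of which innermost components of $v\cap X$ complete to spheres in the link of $[S]$ while still recording $\pi_X(v)$ faithfully. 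A secondary difficulty is the quantitative tracking of constants in the final hyperbolicity step, since one needs to force $d_Y([S],\gamma)<1$ rather than merely an $O(\delta)$ bound; this is where the integrality of distances in $\SC(Y)$, rather than a softer quasi-geodesic argument, must be exploited.
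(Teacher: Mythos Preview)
Your approach is genuinely different from the paper's and, as written, does not close.

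\textbf{What the paper does.} The paper does not argue by contradiction and does not use the Behrstock inequality at all. Instead it reduces (via a thin triangle) to bounding $\pi_X$ along a geodesic from an endpoint to $[S]$, replaces that geodesic by a \emph{terse folding path} in outer space projecting to a uniform quasigeodesic in $\SC(Y)$, and then tracks a single gate backwards along the folding path---the gate that eventually creates the edge corresponding to $S$. From this gate it builds, at every time $t_i$, a sphere $S_i$ (two-bud sphere tree at that gate) adjacent to the $i$-th simplex, and proves via the sphere-tree evolution rules that \emph{all} the $S_i$ have the same innermost piece in $X$ up to bounded error. The bounded diameter of $\pi_X(\gamma)$ follows because every vertex along the quasigeodesic is uniformly close to a sphere with a fixed projection.

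\textbf{The main gap in your argument.} Your final step asserts that the accumulated information ``forces $[S]$ to lie within a uniformly bounded distance of $\gamma$; by choosing $M$ sufficiently large, this bound can be driven below $1$.'' But the only bound on $d_Y([S],\gamma)$ your construction produces is $d_Y([S],\gamma)\le d_Y([S],[S'])+d_Y([S'],v)\le 2$, coming from the adjacencies (i) and (ii). This bound is \emph{fixed}; it does not depend on $M$ at all, and $d_Y([S],\gamma)\le 2$ is perfectly compatible with $[S]\notin\gamma$. The Behrstock step yields $d_{X'}([S],[a])\le 3$ and $d_{X'}([S],[b])\le 3$, but this information lives in $\SC(X')$ and there is no mechanism in your outline to convert it back into a strictly smaller bound on $d_Y([S],\gamma)$. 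In short, Behrstock plus a single auxiliary sphere $S'$ does not imply Bounded Geodesic Image; one needs either a common projection point along the \emph{entire} path (as the paper arranges via folding) or an additional axiom-style input you have not supplied.

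\textbf{A secondary issue.} The construction of $S'$ satisfying (i)--(iii) simultaneously is more delicate than you indicate. If you cap an innermost disk of $v$ (innermost on $v$), the cap taken from $S$ may still meet other pieces of $v$, so (ii) can fail; if you instead cap a disk innermost on $S$, you get (i) and (ii), but the resulting $S'$ may be isotopic to $S$ (so $[S']=[S]$ and (ii) becomes $d_Y([S],v)\le 1$, which is generally false). You also need $S'$ nonseparating to apply Behrstock with $X'=Y\setminus S'$ in the direction you want, and this requires an extra argument. None of these obstacles is insurmountable individually, but even once they are handled the logical gap above remains.
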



A version of projection called \emph{subfactor projection} has been recently defined algebraically by Bestvina and Feighn \cite{bestvina_f:subfactor_projection12}.  Their definition of projection uses minimal invariant subtrees of associated actions on Bass-Serre trees.  They use subfactor projection to show that $\Out$ acts on a finite product of hyperbolic spaces so that every exponentially growing automorphism has positive translation length.  They also prove a version of the Bounded Geodesic Image theorem, but their restrictions on the geodesic are stronger than ours (they require that the geodesic avoids a 4-neighborhood of the vertex).  The relationship between our notion of projection and theirs is not clear.

The bulk of this paper is dedicated to setting up the proof of the Bounded Geodesic Image theorem.  To prove this theorem, we describe a way of topologically viewing slices of the Guirardel core \cite{Guirardel} as `viewing one sphere with respect to a fixed sphere system'.  The object of focus is a \emph{sphere tree}, defined in Section \ref{sec:spheretrees}.  Let $S$ denote an essential embedded sphere and let $\mA$ denote a sphere system.  We have that $S$ intersects $\mA$ in a minimal number of components if and only if $S$ satisfies a normal form condition defined by Hatcher \cite{hatcher:homological_stability95}.  It turns out that Hatcher normal form has a nice interpretation on the level of the Bass-Serre tree $T$ for the splitting corresponding to $\mA$.  This interpretation allows us to associate to $S$ a finite subtree $T_S$ of $T$ together with a finite set of points (called \emph{buds}) in $T_S$, called a \emph{sphere tree} for $S$.  The sphere $S$ may be reconstructed from $T_S$, and of course $T_S$ can be constructed from $S$, but sphere trees for a given sphere are not unique.  However, all sphere trees corresponding to spheres in Hatcher normal form homotopic to $S$ have a common core subtree, which turns out to coincide with a slice of the Guirardel core for the Bass-Serre tree for the splitting associated to $S$ and the tree $T$.

Sphere trees are thus compact combinatorial descriptions of a given sphere $S$ `from the point of view' of a given sphere system $\mA$.  Moreover, sphere trees behave nicely with respect to changing the tree $T$.  Given a \emph{folding path} $(T_t)$ in \emph{outer space} from $T$ (i.e. $T_0=T$), one may consider how $(T_t)_S$ evolves along this folding path (see Section \ref{sec:folding} for definitions).  We show that the evolution of $(T_t)_S$ along a folding path can be completely described by two fundamental rules, which we call the Bud Cancellation and Bud Exchange moves and which have topologically obvious explanations.

Our proof of the Bounded Geodesic Image theorem uses evolution of sphere trees along a folding path to find a point under projection that is common to every point along the given geodesic.  To summarize the proof, let $\gamma$ be a geodesic with endpoints $[A]$ and $[B]$, and consider the submanifold projection of $\gamma$ to a submanifold $X$ with boundary consisting of a single spherical component $S$.  By hyperbolicity, $\gamma$ is contained in a uniformly bounded neighborhood of two geodesics from $[S]$ to $[A]$ and $[B]$, respectively, which themselves can be approximated by images of folding paths from $[A]$ and $[B]$ to $[S]$ in $\SC$.  We prove that, along the image of a folding path terminating at $[S]$, every vertex has a sphere tree with respect to $S$ that contains the same specific subtree.  That common subtree  becomes a common point in the image of every such vertex under submanifold projection.

This paper is organized as follows.

In Section \ref{sec:complexes}, we introduce some complexes related to the sphere complex that make our subsequent definitions and proofs cleaner.  This includes defining a \emph{disk and sphere complex} and a \emph{nonseparating sphere complex}, the definitions of which are intuitively clear.  We prove that the complexes defined are all quasi-isometric in certain situations.

In Section \ref{sec:Hatchernormalform} we recall Hatcher normal form for viewing a sphere in the doubled handlebody so that the sphere intersects a given sphere system in a minimal number of components.

In Section \ref{sec:projection}, we define submanifold projection.  The definition and basic properties are intuitive and straightforward, and the reader interested in only these details can safely restrict attention to just this section of the paper and the preceding sections.

The remainder of the paper sets up the tools used to prove the Bounded Geodesic Image theorem.

In Section \ref{sec:spheretrees}, we define sphere trees.  We show how to construct spheres from sphere trees and sphere trees from spheres, establishing the relationship between them.  To construct sphere trees from spheres we use Hatcher normal form.  We describe the two fundamental moves (Bud Cancellation and Bud Exchange) on sphere trees.  We use these moves to define sphere trees in \emph{consolidated} form.  We choose the word `consolidated' purposefully, as we also show that these sphere trees precisely correspond with the consolidated trees constructed by Behrstock, Bestvina, and Clay \cite{behrstock_bc:intersection_numbers10}, which they prove coincide with slices of the Guirardel core.

In Section \ref{sec:folding}, we introduce two notions of quasigeodesics in curve complex analogues:  the folding paths used by Bestvina and Feighn \cite{bestvina_f:hyperbolicity_of_ff11} and the fold paths used by Handel and Mosher \cite{handel_m:hyperbolicity_of_fs12}.  These quasigeodesics are projections of paths from Culler and Vogtmann's outer space~\cite{culler_v:outer_space}, so we recall the notions related to outer space here.  Folding paths are useful for our purposes because sphere trees evolve nicely along them, by simple applications of the two moves on sphere trees.  However, folding paths are known to be quasigeodesics in the factor complex, not the sphere complex -- fold paths are quasigeodesics in the sphere complex.  These two families of paths are closely related, though:  there is a family of paths in $\SC$ where each member is both a fold path and a projection of a folding path (with full tension subgraph and all illegal turns folded at unit speed).  We call corresponding paths in the outer space \emph{terse paths}, and prove that their projections to the sphere complex form a coarsely transitive path family in this section.

The proof of the Bounded Geodesic Image theorem takes up Section \ref{sec:maintheorem}.

We end with some remarks about future directions and applications.

The definitions and results in this paper were inspired by a wonderfully inspiring discussion held at the American Institute of Mathematics in November of 2010.  We thank those present for that discussion, including but not limited to Mark Feighn, Michael Handel, Yair Minsky, and especially Karen Vogtmann, who proposed this topological approach to projection to us.  We also thank Lee Mosher, Mladen Bestvina, Patrick Reynolds, and Saul Schleimer for interesting discussions related to this material.  Most especially, we wish to thank Matt Clay, whose numerous conversations and suggestions on this material strongly shaped it.


\section{The Sphere Complex and Its Relatives}\label{sec:complexes}

Intuitively, submanifold projection should be a way of projecting a vertex in the sphere complex to the link of a fixed reference vertex.  The link of the reference vertex corresponds to all vertices that can be represented by spheres which are disjoint from a sphere $A$ representing the reference vertex -- that is, all vertices represented by spheres in the complementary submanifold $M_n - A$.

Let $S$ represent the vertex to be projected.  To find the projection, we use surgery to cut $S$ along $A$.  As such, it will be most convenient to work with disks as well as spheres, and often (to ensure that $M_n-A$ is connected) with nonseparating spheres.  The good news is that we do not lose any coarse geometric information with these restrictions, as the next definitions and proposition show.

Recall the \emph{sphere complex} of a submanifold $X$ of the doubled handlebody $M_n$ is the simplicial complex $\SC(X)$ whose $k$-simplices are isotopy classes of sphere systems with $k+1$ spheres, with faces determined by inclusion.  The \emph{nonseparating sphere complex} $\Snosep(X)$ of $X$ is the full simplicial subcomplex of $\SC(X)$ obtained by restricting to simplices with representative sphere systems consisting entirely of nonseparating spheres.  The sphere complex was defined by Hatcher \cite{hatcher:homological_stability95}, while the nonseparating sphere complex is closely related to Hatcher's complex $Y \subset \SC(X)$ (the two complexes have the same vertex set, but Hatcher only allows sphere systems whose complement is connected).

For convenience, we also define relative versions of these complexes.  The \emph{disk and sphere complex} of $X$ is the simplicial complex $\DS(X)$ whose $k$-simplices are systems of $k+1$ distinct isotopy classes of essential embedded $2$-spheres and disks rel boundary in $X$ which can all be realized disjointly.  The \emph{nonseparating disk and sphere complex} $\DSnosep(X)$ of $X$ is the full simplicial subcomplex of $\DS(X)$ obtained by restricting to simplices with representative disk-and-sphere systems whose complement in $X$ is connected.

\begin{proposition}\label{prop:Stonosep}
The inclusion map on vertices from $\Snosep(M_n)$ to $\SC(M_n)$ is a (1,2)-quasi-isometry.  The inclusion map on vertices from $\DSnosep(M_n)$ to $\DS(M_n)$ is a (1,2)-quasi-isometry.  For $X \subset M_n$ a submanifold, the inclusion map on vertices from $\SC(X)$ to $\DS(X)$ is a (1,2)-quasi-isometry.
\end{proposition}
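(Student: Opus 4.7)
The plan is to verify, for each of the three inclusions $\iota$, that (a) $\iota$ is $1$-Lipschitz on vertices, (b) every vertex of the ambient complex lies within distance $2$ of the image, and (c) for source vertices $v,w$, $d_{\mathrm{sub}}(v,w) \le d_{\mathrm{amb}}(v,w) + 2$. Item (a) holds automatically since each source is a full simplicial subcomplex of the target, so disjointness of representative systems is preserved in both directions.

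For coarse density I would exhibit, for each ``extra'' target vertex, an adjacent source vertex. For $\Snosep \hookrightarrow \SC$ (and similarly $\DSnosep \hookrightarrow \DS$), any essential separating sphere $S \subset M_n$ cuts $M_n$ into two positive-genus pieces, each of which contains a nonseparating sphere of $M_n$ disjoint from $S$. For $\SC(X) \hookrightarrow \DS(X)$, an essential disk $D \subset X$ has $\partial D$ on some boundary sphere $\partial_i X$, bounding two disks $D'_1, D'_2 \subset \partial_i X$; capping $D$ with either $D'_j$ and pushing slightly into the interior of $X$ produces an essential sphere (essentiality inherited from $D$ not being boundary-parallel) disjoint from $D$. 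So coarse density is at most $1$ in all three cases.

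For bounded distortion I would take a geodesic $v = S_0, S_1, \ldots, S_d = w$ in the ambient complex and replace each forbidden intermediate vertex $S_i$ by an allowed vertex $\tilde S_i$ disjoint from $S_{i-1}$ and $S_{i+1}$. The crucial input is a \emph{same-side lemma} forced by the geodesic property: for separating $S_i$ with $M_n - S_i = U \sqcup V$, if $S_{i-1} \subset U$ and $S_{i+1} \subset V$ they would lie in disjoint open subsets of $M_n$, hence be disjoint --- contradicting $d_\SC(S_{i-1}, S_{i+1}) = 2$. So the neighbors share a side, and any nonseparating sphere in the opposite side supplies $\tilde S_i$. Processing consecutive separating vertices is simultaneously compatible: if $V_i$ is the non-path-side of $S_i$, then $V_i$ is connected, disjoint from $S_{i+1}$, and its closure contains $S_i \subset U_{i+1}$, so $V_i \subset U_{i+1}$; this forces $\tilde S_i \subset V_i$ and $\tilde S_{i+1} \subset V_{i+1}$ to be disjoint, giving a path in $\Snosep$ of length exactly $d$.

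The same-side lemma adapts to the disk-and-sphere setting with $D'_1, D'_2 \subset \partial_i X$ playing the role of $U, V$, except in one degenerate configuration: both neighbors of an intermediate disk $D_i$ are themselves disks on the same $\partial_i X$ whose boundary circles lie in opposite components of $\partial_i X \setminus \partial D_i$. In that case neither choice of cap yields a sphere disjoint from both neighbors, and one must insert an extra vertex --- for instance the pair of spheres doubling $D_i$ across $D'_1$ and across $D'_2$ respectively, which are disjoint from one another and from the appropriate neighbor. Controlling the number of such forced insertions, together with analogous adjustments at the endpoints, is the main obstacle and is precisely what produces the additive constant $2$ in the final bound rather than $0$. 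Everything else reduces to the same-side lemma plus a routine disjointness check on the doubled or complement-chosen replacements.
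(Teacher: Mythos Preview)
Your treatment of the nonseparating inclusions is correct and is exactly the paper's argument: the ``same-side lemma'' (if $S_{i-1}$ and $S_{i+1}$ cannot be made disjoint they lie in the same component of $M_n - S_i$) is precisely what the paper uses, and your additional observation that $V_i \subset U_{i+1}$ makes the compatibility of consecutive replacements explicit where the paper just says ``perform this replacement repeatedly.''

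For $\SC(X) \hookrightarrow \DS(X)$, however, there is a genuine gap. Your degenerate configuration arises because you ask the replacement sphere $\tilde D_i$ to be disjoint from both \emph{original} neighbours $D_{i-1}$ and $D_{i+1}$. That is stronger than necessary and creates an obstruction which you then do not resolve: the assertion that the forced insertions total at most $2$ is never argued, and in fact nothing in your setup prevents arbitrarily many consecutive degenerate steps. The paper avoids this entirely by capping \emph{sequentially}: having already fixed $cap(D_{i-1})=D_{i-1}\cup E_{i-1}$, one simply takes $E_i$ to be the component of $\partial_j X \setminus \partial D_i$ \emph{not} containing $\partial D_{i-1}$. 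A short check in a collar of $\partial_j X$ shows that with this choice $cap(D_i)$ can always be realized disjointly from $cap(D_{i-1})$, regardless of which cap $E_{i-1}$ was; only the case where \emph{both} caps are ``outer'' (each containing the other's boundary circle) can fail, and the greedy rule never produces that case. So the capped sequence is a path in $\SC(X)$ of the \emph{same} length, and no insertions are needed at all. Your obstacle is an artefact of trying to satisfy both neighbours simultaneously rather than just the previously processed one.
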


\begin{proof}
To see that $\DS(X)$ and $\SC(X)$ are quasi-isometric, we provide a quasi-inverse to the map $\SC(X) \to \DS(X)$ induced by inclusion on the vertices.  The quasi-inverse map takes a vertex $[S]$ of $\DS(X)$ to the vertex $[cap(S)]$ of $\SC(X)$, where $cap(S)$ is defined as follows.  If $S$ is a sphere, $cap(S) := S$.  If $S$ is a disk, then $\partial S \subset \partial X$ is separating in one sphere component of $\partial X$.  Let $D$ denote either half of the separated component of $\partial X$.  Define $cap(S)$ to be the sphere $S \cup \partial S \cup D$.  As $S$ is essential, $cap(S)$ is essential.  If $S$ is embedded, then $cap(S)$ can be realized as an embedded sphere in $X$.  The two possible spheres resulting from the two possible choices for $D$ can be realized disjointly and moreover can be realized disjointly from $S$.  Thus, the two choices for $cap(S)$ represent adjacent vertices in $\SC(X)$ and represent vertices which form a simplex with $[S]$ in $\DS(X)$.  If $S_1$ and $S_2$ are two disjoint disks or spheres, then $\partial S_1$ and $\partial S_2$ can be realized disjointly, and so $cap(S_1)$ and $cap(S_2)$ can be realized disjointly.  Moreover, along a path in $\DS(X)$, choices for $D$ can be made for each disk along the path in a coherent manner, so that capping produces a path of the same length in $\SC(X)$.  It is now straightforward to see that the map from $\DS(X)$ to $\SC(X)$ induced by $[S] \mapsto [cap(S)]$ is as desired.

For nonseparating versions of these complexes on $M_n$, we again provide a quasi-inverse to the map induced by inclusion.  Suppose $S_1$, $S_2$, and $S_3$ are three essential disks or spheres in $M_n$ such that:  $S_2$ is separating, $S_1$ and $S_2$ are disjoint, and $S_2$ and $S_3$ are disjoint, but $S_1$ and $S_3$ cannot be realized disjointly.  Thus  $X - S_2$ has two components, $N_1$ and $N_2$.  Since $S_1$ and $S_3$ cannot be realized disjointly, we can assume that both are contained in $N_1$.  Since $S_2$ is essential in $M_n$, $N_2$ has nontrivial fundamental group and so $N_2$ contains a nonseparating essential sphere which is disjoint from each of $S_1,\, S_2$ and $S_3$. The result follows by performing this replacement repeatedly along any given path in $\DS(M_n)$ or $\SC(M_n)$.
\end{proof}

When referring to distances between vertices in each of these complexes, we mean their simplicial distance in the $1$-skeleton of the complex.  The distance between two sets of vertices $S_1$ and $S_2$ is the diameter of their union:   $d(S_1, S_2) := \sup_{v_1 \in S_1, v_2 \in S_2} d(v_1, v_2)$.  This is not a true distance function as the distance between a set with more than one element and itself is not 0, but it is uniformly close to a distance function for a collection of sets of uniformly bounded size, as our sets will be in all useful instances.

\section{Hatcher Normal Form}\label{sec:Hatchernormalform}

Here we recall Hatcher normal form for spheres embedded in $M = M_{g,b}$, following \cite{hatcher:homological_stability95} and \cite{hatcher_v:isoperimetric_inequalities96}.  We will use Hatcher normal form to show that submanifold projection is well-defined, and that every embedded sphere can be represented by a sphere tree.

Let $\mA$ denote a fixed sphere system in $M$.  When $\mA$ is simple, an essential embedded sphere $S \subset M$ is in \emph{Hatcher normal form} with respect to $\mA$ if $S$ meets $\mA$ transversely and every component of $S \cap \mA$ is a simple closed curve which splits $S$ into components called \emph{pieces} such that:
\begin{enumerate}
\item the boundary of each piece meets each sphere in $\mA$ in at most one component of intersection, and
\item no piece is a disk isotopic, fixing its boundary, to a subset of $\mA$.
\end{enumerate}
When $\mA$ is not simple, $S$ is in \emph{Hatcher normal form} with respect to $\mA$ if $S$ is in Hatcher normal form with respect to some simple sphere system containing $\mA$.  We extend Hatcher normal form to sphere systems by declaring a sphere system is in Hatcher normal form with respect to $\mA$ if each sphere in the system is in Hatcher normal form.

We say that $S$ intersects $\mA$ \emph{minimally} if $S$ and $\mA$ are in general position and the number of components of $S \cap \mA$ is minimal among all representatives of the isotopy class of $S$.

\begin{theorem}\cite{hatcher:homological_stability95,hatcher_v:isoperimetric_inequalities96}\label{thm:Hatchernormalform}
Every sphere system $S$ is isotopic to a sphere system in Hatcher normal form with respect to $\mA$.  The system $S$ intersects $\mA$ minimally if and only if $S$ is in Hatcher normal form with respect to $\mA$.\end{theorem}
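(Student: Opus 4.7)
The plan is to first reduce to the case that $\mA$ is simple, since Hatcher normal form with respect to a non-simple system is defined through a simple extension and minimality with respect to $\mA$ reduces to minimality with respect to such an extension. For simple $\mA$ and $S$ transverse to $\mA$, existence together with the direction \emph{minimal implies normal form} will follow from a local surgery argument, while the harder \emph{normal form implies minimal} direction requires a global isotopy analysis making essential use of Laudenbach's theorem.

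For the surgery argument, I would proceed contrapositively: assume $S$ is transverse to $\mA$ but violates (1) or (2), and exhibit a finite isotopy strictly reducing $|S\cap\mA|$. If (2) fails, a disk piece $P\subset S$ is isotopic rel boundary to a disk $D$ on some $A\in\mA$, the null-homotopic embedded sphere $P\cup D$ bounds a $3$-ball by Laudenbach, and a standard innermost-disk reduction inside this ball lets us isotope $S$ to remove at least $\partial P$ from $S\cap\mA$. If (1) fails, some piece has two boundary circles on a single sphere $A$; I would choose a circle $c$ of $S\cap A$ innermost on $A$, so that $c$ bounds a disk $D\subset A$ with $D^{\circ}\cap S=\varnothing$, and surger the piece of $S$ adjacent to $c$ across $D$ to eliminate $c$ without creating new intersections. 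Iterating produces a representative in normal form (giving existence) and shows contrapositively that any minimal representative is already in normal form.

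For the reverse direction, the plan is to fix $S$ in normal form and an isotopic system $S'$ realizing the minimum number of intersections, pick an ambient isotopy $h\co M\times I\to M$ with $h_{0}=\mathrm{id}$ and $h_{1}(S)=S'$, and analyze the preimage surface $\Sigma=\{(x,t):h_{t}(x)\in\mA\}\cap(S\times I)$ in general position inside $S\times I$. Its boundaries on $S\times\{0\}$ and $S\times\{1\}$ realize $S\cap\mA$ and $h_{1}^{-1}(S'\cap\mA)$ respectively. After minimizing the complexity of $\Sigma$ over ambient isotopies with the same endpoints --- using Laudenbach to pass freely between homotopy and isotopy for spheres in $M_{g,b}$ --- the surface $\Sigma$ will consist of annuli spanning from bottom to top together with possibly some disk components. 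Annuli preserve the intersection count, so any discrepancy $|S\cap\mA|>|S'\cap\mA|$ forces at least one disk component of $\Sigma$ with boundary on $S\times\{0\}$. I expect the main obstacle of the proof to lie precisely here: one must show that any such disk component produces a normal-form violation for $S$ itself. The plan is to project the disk into $M$, apply an innermost-disk replacement inside the relevant sphere of $\mA$, and identify the adjacent piece of $S$: either there is a second intersection circle on $A$ violating (1), or the adjacent piece is a disk cobounding a ball with a subdisk of $\mA$ violating (2). Both alternatives contradict the normal form of $S$, so no such disk component exists and $|S\cap\mA|\leq|S'\cap\mA|$, completing the proof.
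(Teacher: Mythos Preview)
The paper does not prove this theorem: its proof consists entirely of a citation to Hatcher (for maximal $\mA$) and Hatcher--Vogtmann (for simple $\mA$), plus the remark that the non-simple case follows analogously. Your proposal therefore goes far beyond what the paper provides, attempting an actual reconstruction. The overall architecture you lay out --- reduce to simple $\mA$, use local reduction moves for existence and for ``minimal $\Rightarrow$ normal form'', and use an $S\times I$ cobordism analysis for ``normal form $\Rightarrow$ minimal'' --- is sound and close in spirit to how Hatcher himself proceeds.

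There is, however, a genuine gap in your treatment of condition~(1). You choose $c\subset S\cap A$ innermost on $A$, bounding $D\subset A$ with $D^\circ\cap S=\varnothing$, and then propose to ``surger the piece of $S$ adjacent to $c$ across $D$ to eliminate $c$''. But compressing $S$ along $D$ is a surgery, not an isotopy: it replaces the sphere $S$ by two spheres, and $S$ is isotopic to one of these only if the other happens to be inessential --- which nothing in your setup guarantees. Concretely, in the simply-connected complementary region $C\cong M_{0,k}$ the piece $Q$ of $S$ adjacent to $c$ need not be a disk (it may carry further boundary circles on other spheres of $\mA$), so there is no ball across which to push; and even when $Q$ is a disk, the sphere $Q\cup D$ may be boundary-parallel in $C$ rather than trivial, in which case $Q$ is not isotopic rel~$\partial$ to $D$ and no intersection-reducing isotopy is evident. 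Hatcher's original argument for maximal $\mA$ sidesteps this by using that complementary regions are $M_{0,3}$, where the finitely many isotopy types of pieces can be analysed by hand; the general simple case in Hatcher--Vogtmann is then handled by enlarging to a maximal system. A direct argument along the lines you sketch would require an additional idea at exactly this step.
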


\begin{proof}
Hatcher proves this for maximal sphere systems, and this is extended to simple sphere systems by Hatcher and Vogtmann.  Extension to non-simple sphere systems follows analogously.
\end{proof}

Note that everything above can apply to disks as well as spheres, so in fact we may talk about systems of spheres and disks embedded (rel boundary) in $M$ being in Hatcher normal form with respect to a fixed system of disks and spheres.

The above theorem shows every sphere is isotopic to some sphere in Hatcher normal form, but there are many spheres in Hatcher normal form isotopic to a given sphere.  Hatcher shows that spheres in Hatcher normal form that are isotopic are in fact \emph{equivalent}, in the following sense.

\begin{definition}
Let $S$ and $S'$ be two sphere systems in Hatcher normal form with respect to $\mA$.  We say $S$ and $S'$ are \emph{equivalent} if there exists a homotopy $h_t\co S \to M$ from $S$ to $S'$ such that $h_t$ remains transverse to $\mA$ for all $t$, and $h_t(S) \cap \mA$ varies only by isotopy in $\mA$.  In particular, the circle components of $h_t(S) \cap \mA$ stay disjoint for all $t$.
\end{definition}

\begin{theorem}\cite{hatcher:homological_stability95}
Isotopic sphere systems in Hatcher normal form are equivalent.
\end{theorem}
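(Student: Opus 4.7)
The plan is to start from the given ambient isotopy carrying $S$ to $S'$, perturb it to be generic with respect to $\mA$, and then argue that all combinatorial changes in the intersection pattern with $\mA$ can be removed by local modifications, using the minimality guaranteed by Theorem~\ref{thm:Hatchernormalform}.

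More concretely, choose an ambient isotopy $F_t \co M \to M$ with $F_0 = \mathrm{id}$ and $F_1(S) = S'$, and set $h_t := F_t|_S$, so $h_0 = \mathrm{id}_S$ and $h_1 \co S \to S'$. After a small perturbation of $F_t$ (fixing $t=0,1$), we may assume $h_t(S)$ is transverse to $\mA$ except at finitely many critical times $0 < t_1 < \cdots < t_k < 1$, at each of which a single generic tangency event occurs. Such events come in two standard flavors: (a) birth or death of a pair of intersection circles (a Morse-type extremum), and (b) a saddle exchange that alters the isotopy class of $h_t(S) \cap \mA$ inside $\mA$ without changing the circle count. Between critical times, $h_t(S) \cap \mA$ varies only by ambient isotopy in $\mA$, so the assertion of the theorem is exactly the statement that the isotopy can be modified to have no critical times at all.

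The key leverage is that, by Theorem~\ref{thm:Hatchernormalform}, both $h_0(S) = S$ and $h_1(S) = S'$ minimize $|h_t(S) \cap \mA|$ within the isotopy class. Hence if any type-(a) event occurs, there must be a pair of critical times $t_i < t_j$ where a circle-pair is born and later dies. Pick such a pair where the lifetime $t_j - t_i$ is minimal. Immediately after $t_i$, the two newborn circles cobound an annulus on $h_t(S)$ and an annulus on $\mA$ which together form a sphere bounding a ball in $M$; equivalently, a disk piece of $h_t(S) \setminus \mA$ is isotopic rel boundary to a disk in $\mA$, in direct violation of condition (2) of Hatcher normal form. Applying Hatcher's disk-surgery procedure parametrically in a neighborhood of this innermost disk-pair produces a local modification of $F_t$ on $[t_i - \epsilon, t_j + \epsilon]$ that cancels both events simultaneously and leaves the rest of the isotopy unchanged. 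Iterating strictly decreases $k$, so all type-(a) events can be removed. Type-(b) events are handled similarly: any saddle move that alters $h_t(S) \cap \mA$ up to isotopy in $\mA$ requires a half-bigon to be swept through $\mA$, which, by the minimality at the endpoints and an innermost-disk argument, must itself be paired with an opposite saddle in a way that can be cancelled by a local isotopy.

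The main obstacle is the parametric coordination of these cancellations: disjoint disk pieces may host independent events, events may occur simultaneously after perturbation, and cancelling one pair should not create new pairs elsewhere. The way around this is to always pick the pair of events with minimal lifetime, so that the innermost disk in question is genuinely innermost throughout the relevant time interval and the supporting local isotopy does not disturb any other critical time. This is essentially the parametric version of Hatcher's normal-form surgery from \cite{hatcher:homological_stability95}; its extension from a single sphere to a sphere system is automatic, since disjoint spheres in the system contribute disjoint intersection patterns and independent surgery moves.
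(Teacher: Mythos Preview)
The paper does not prove this theorem; it is quoted from \cite{hatcher:homological_stability95} and used as a black box (the next item is already the corollary). So there is no ``paper's own proof'' to compare your proposal against.

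That said, your sketch has genuine gaps as a free-standing argument. At a generic tangency of $h_t(S)$ with a component $A\in\mA$, a \emph{single} intersection circle is created or destroyed, not a pair; the newborn circle bounds a small disk on $h_t(S)$ and a small disk on $A$, not annuli. More importantly, the observation that this disk is parallel into $\mA$ does not contradict anything, since $h_t(S)$ at intermediate $t$ is \emph{not} assumed to be in Hatcher normal form---minimality holds only at $t=0,1$. The saddle case is also not handled: a saddle tangency changes the number of components of $h_t(S)\cap A$ by $\pm 1$ as well (one circle splits into two, or two merge), so types (a) and (b) are not distinguished by circle count as you describe, and the ``half-bigon paired with an opposite saddle'' assertion is unsupported. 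Finally, the minimal-lifetime pairing scheme does not obviously survive when births, deaths, and saddles can interleave; one would need a genuine $1$-parameter Cerf-type cancellation argument, carried out in the nontrivial $3$-manifold $M$, to make this approach go through.

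Hatcher's actual proof in \cite{hatcher:homological_stability95} follows a different route: he lifts to the universal cover, where the complementary pieces of the simple system $\mA$ are punctured $3$-spheres, and argues directly with the pieces rather than attempting to straighten a generic $1$-parameter family.
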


\begin{corollary}\label{cor:restrictingtopieces}
Isotopic sphere systems $A$ and $B$ in Hatcher normal form are isotopic via an isotopy that restricts to a homotopy in each component $C$ of $M - \mA$ that induces an isotopy on $\partial C$.  This homotopy induces a bijection between the pieces of $A$ and $B$.
\end{corollary}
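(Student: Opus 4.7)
The plan is to take the equivalence homotopy supplied by the preceding theorem of Hatcher and promote it to an ambient isotopy that respects each complementary component of $\mA$. Write $h_t \co S \to M$ for the equivalence homotopy from $A$ to $B$, so that each $h_t$ is transverse to $\mA$ and the family of 1-submanifolds $h_t(S) \cap \mA \subset \mA$ varies only by an isotopy inside the 2-manifold $\mA$.

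First I would promote the motion of the intersection curves in $\mA$ to a controlled ambient isotopy of $M$. Applying isotopy extension inside $\mA$, there is an isotopy of $\mA$ starting at the identity that carries $A \cap \mA$ to $h_t(S) \cap \mA$ at time $t$; extend this to an ambient isotopy $\Psi_t \co M \to M$ supported in a small regular neighborhood of $\mA$. Then $\Psi_t(\mA) = \mA$ setwise, so each component $C$ of $M - \mA$ is preserved setwise and $\Psi_t$ restricts to an isotopy on each boundary sphere of $C$. Setting $\tilde h_t := \Psi_t^{-1} \circ h_t$ yields a homotopy from $A$ to $\Psi_1^{-1}(B)$ along which $\tilde h_t(S) \cap \mA = A \cap \mA$ is pointwise constant, so for each piece $P$ of $A$ lying in the closure $\overline C$ of a complementary component, $\tilde h_t|_P$ is a homotopy rel $\partial P$ inside $\overline C$ ending at a well-defined piece $P'$ of $\Psi_1^{-1}(B)$.

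Next I would upgrade each of these rel-boundary homotopies of pieces to a rel-boundary isotopy using the theorem of Laudenbach cited earlier in the paper. Each piece is a planar surface properly embedded in the irreducible 3-manifold $\overline C \cong M_{g,b}$; capping off $\partial P = \partial P'$ by a common collection of disks on the spheres of $\partial \overline C$ pushed slightly into $\overline C$ produces two embedded 2-spheres in $\overline C$ that remain homotopic, so Laudenbach supplies an ambient isotopy of $\overline C$ carrying one to the other. A standard relativization then lets one arrange this isotopy to fix the caps and a collar of $\partial \overline C$, so that it restricts to an isotopy of $P$ onto $P'$ rel $\partial P$ in $\overline C$. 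Performing these piece-by-piece isotopies simultaneously across the disjoint components and then concatenating with $\Psi_t$ yields an isotopy from $A$ to $B$ whose restriction to each $\overline C$ is a homotopy of pieces and which induces $\Psi_t|_{\partial C}$ on $\partial C$; the induced bijection between the pieces of $A$ and those of $B$ is then read off the isotopy directly.

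The main obstacle is the rel-boundary upgrade of Laudenbach from spheres to planar-surface pieces; the capping construction reduces it to the spherical version already cited in the paper, but some care is needed to verify that the ambient isotopy of $\overline C$ can be arranged to fix both the caps and a collar of $\partial \overline C$ so that it actually descends to an isotopy of $P$ rel $\partial P$. The remaining steps --- extending the curve isotopy inside $\mA$ to an ambient isotopy of $M$, conjugating to normalize the intersection, and assembling the local isotopies simultaneously --- are essentially bookkeeping.
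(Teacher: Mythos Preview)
Your approach is correct and begins exactly as the paper does: take Hatcher's equivalence homotopy $h_t$ and use the fact that $h_t(S)\cap\mA$ varies only by isotopy in $\mA$ to normalize so that each piece stays in its complementary component $C$. Where you diverge is in the amount of work you do after that point.

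The paper's own proof stops essentially at your first paragraph. It simply observes that the equivalence homotopy can be modified so that its restriction to the points of $A$ lying in any component $C$ remains in $C$ throughout, and that this modified homotopy therefore induces a bijection between the pieces of $A$ and the pieces of $B$. That is the entire argument; the paper does not cap pieces, does not invoke Laudenbach piece-by-piece, and does not explicitly upgrade the per-component homotopies to rel-boundary isotopies. Your second and third paragraphs, while correct, go well beyond what the paper supplies; in particular the ``main obstacle'' you identify (relativizing the Laudenbach isotopy to fix caps and a collar of $\partial\overline C$) is a genuine technical point that the paper simply does not address. For the application in the paper (Theorem~\ref{thm:well-defined}), only the bijection between pieces together with the fact that corresponding pieces are homotopic in $C$ is actually used, and that much follows already from your normalized homotopy $\tilde h_t$ without the Laudenbach step.
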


\begin{proof}
Two isotopic sphere systems $A$ and $B$ in Hatcher normal form with respect to $\mA$ are equivalent, so there exists a homotopy between them that acts on their intersections with $\mA$ via isotopy.  Thus, we can modify the homotopy so that its restriction to points of $A \cap C$ always remain in $A \cap C$ for each $C$, and the homotopy induces a bijection between the pieces of $A$ and $B$.
\end{proof}

\section{Submanifold Projection} \label{sec:projection}

We are now ready to define submanifold projection.

\begin{definition}[The Projection Map $\pi$ for Spheres]
Fix submanifolds $X \subset Y \subset M_n$.  A subset of a surface is called \emph{innermost} if it is homeomorphic to a disk or a sphere.  Let $S$ denote a disk or sphere in $Y$.  The \emph{projection} $\pi_X(S)$ of $S$ onto $X$ is defined to be the collection of all components of $S \cap X$ which are innermost in $S$.
\end{definition}

For any element $D_S \in \pi_X(S)$, since $S$ is embedded in $X$, $D_S$ is embedded in $X$.  As $D_S$ is innermost in $S \cap X$, $D_S \cap \partial X = \emptyset$.  If $S$ intersects $\partial X$ \emph{minimally} -- i.e. the number of components of intersection is minimal -- then $D_S$ is essential.  There are only finitely many choices of $D_S$.  Thus, $\pi_X(S)$ is a finite set of disks or spheres in $X$.  Note $\pi_X(S)$ could be empty.

Submanifold projection for spheres induces a nice map on sphere complexes:

\begin{definition}[The Projection Map $\pi$ for Disk and Sphere Complexes]
Let $S$ denote a disk or sphere in $Y$.  The \emph{projection} $\pi_X\co \DS(Y) \to \DS(X)$ is defined to be the set of vertices
    $$\pi_X([S]) := \cup_{\hat S} [\pi_X(\hat S)],$$
where each $\hat S$ is homotopic to $S$ and intersects $\partial X$ minimally.  If $\pi_X(\hat S)$ contains no essential disk or sphere then $\pi_X([S])$ is undefined.  The restriction of the projection map $\pi_X$ to the domain $\DSnosep(Y)$ is also denoted $\pi_X\co \DSnosep(Y) \to \DS(X)$.
\end{definition}

We begin by proving that this map is coarsely well-defined.

\begin{theorem}[Coarsely Well-Defined]\label{thm:well-defined}
For any vertex $[S]$ of $\DS(Y)$ (or of $\DSnosep(Y)$) such that $\pi_X([S])$ is defined, the collection $\pi_X([S])$ has diameter 1 in $\DS(X)$.
\end{theorem}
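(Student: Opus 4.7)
The plan is to establish the stronger statement that, for any two representatives $\hat S_1, \hat S_2$ of $[S]$ both intersecting $\partial X$ minimally, the sets of isotopy classes $[\pi_X(\hat S_1)]$ and $[\pi_X(\hat S_2)]$ in the vertex set of $\DS(X)$ are equal, and that this common set spans a single simplex of $\DS(X)$. The diameter bound of $1$ follows immediately, since any two vertices of a simplex are at simplicial distance at most $1$.

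Fix one such representative $\hat S$. By Theorem~\ref{thm:Hatchernormalform}, intersecting $\partial X$ minimally is equivalent to being in Hatcher normal form with respect to $\partial X$. The innermost pieces of $\hat S \cap X$ are by construction pairwise disjoint subsurfaces of $\hat S$, hence are embedded disjointly in $X$. Each such piece is moreover essential: a disk piece cannot be isotopic rel boundary into $\partial X$ by condition~(2) of Hatcher normal form, while a closed sphere piece can only arise when $\hat S \subset X$ coincides with the piece, in which case it is essential in $X$ because it is essential in the ambient $Y \supset X$. Thus $[\pi_X(\hat S)]$ is a set of vertices of $\DS(X)$ spanning a simplex.

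To see that the set $[\pi_X(\hat S)]$ does not depend on the choice of representative, take two choices $\hat S_1, \hat S_2$ both in Hatcher normal form with respect to $\partial X$. Hatcher's equivalence theorem together with Corollary~\ref{cor:restrictingtopieces} supplies an ambient isotopy from $\hat S_1$ to $\hat S_2$ whose restriction to the component $X$ of $Y - \partial X$ is a homotopy inducing a bijection between the pieces of $\hat S_1 \cap X$ and those of $\hat S_2 \cap X$, and which acts as an isotopy on $\partial X$. Corresponding pieces are homeomorphic and homotopic in $X = M_{g,b}$. For corresponding sphere pieces this directly yields isotopy by Laudenbach's theorem. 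For corresponding disk pieces, pre-composing with the boundary isotopy aligns their boundaries, after which capping both disks using a common disk $D \subset \partial X$ (as in the construction of Proposition~\ref{prop:Stonosep}) produces homotopic embedded spheres in $X$, to which Laudenbach again applies; the resulting isotopy can be arranged to fix $D$ pointwise, descending to an isotopy of the original disks rel boundary. Hence $[\pi_X(\hat S_1)] = [\pi_X(\hat S_2)]$, and the same conclusion holds in the nonseparating setting because $\DSnosep(Y)$ is a full subcomplex of $\DS(Y)$.

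The main obstacle is the disk case of the independence argument. Sphere pieces are handled immediately by Laudenbach, but disks require the reduction to the sphere case via capping, together with the extraction of a boundary-preserving isotopy from the isotopy of the capped spheres. The essentialness of innermost pieces, while crucial to land in $\DS(X)$, is disposed of cleanly via condition~(2) of Hatcher normal form and the essentialness of $\hat S$ in $Y$.
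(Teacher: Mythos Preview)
Your argument is correct and follows essentially the same route as the paper's: invoke Theorem~\ref{thm:Hatchernormalform} to identify minimal intersection with Hatcher normal form, apply Corollary~\ref{cor:restrictingtopieces} to get a piece-preserving homotopy between any two normal-form representatives, use Laudenbach to upgrade homotopy of pieces to isotopy, and conclude diameter~$1$ from disjointness of the pieces of a single embedded representative. You are in fact more careful than the paper in verifying essentialness of innermost pieces and in separating the sphere-piece and disk-piece cases; the paper's proof simply cites Laudenbach for ``pieces'' without making this distinction.
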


\begin{proof}
By Theorem \ref{thm:Hatchernormalform}, the sphere $\hat S \in [S]$ used to define $\pi_X([S])$ is in Hatcher normal form.  By Corollary \ref{cor:restrictingtopieces}, the pieces of two isotopic spheres with respect to $\mA$ in $Y$ containing $\partial X$ are homotopic in the complement of $\partial X$.  By work of Laudenbach \cite{laudenbach:annals73,laudenbach:topologie_book74}, homotopic pieces in $X$ are isotopic, so the projections of two isotopic spheres coincide.  As a sphere in Hatcher normal form is embedded, all components of $\pi_X(\hat S)$ are disjoint, so $\pi_X(\hat X)$ has diameter 1.
\end{proof}

By the previous section, this map can easily be translated to $\SC(X)$ (without disks) by capping each disk in $\pi_X([S])$, making the projection have uniformly bounded diameter in $\SC(X)$.  In fact, with more careful thought, the diameter of $\pi_X([S])$ in $\SC(X)$ is at most 2.

Knowing that projection for the disk and sphere complex is coarsely well-defined, we observe some properties of projection.  For two vertices $[A]$ and $[B]$ of $\DS(Y)$, let $d_{X}([A],[B])$ be the distance function in the complex $\DS(X)$ between the projections $\pi_X([A])$ and $\pi_X([B])$.  Similarly define $d_{X}^{nosep}$ for distances in the complex $\DSnosep(X)$.

\begin{proposition}\label{prop:properties}
Assume that $X \subset Y \subset M_n$.  Let $A$ and $B$ denote disks or spheres in $Y$.  Submanifold projection satisfies the following properties:
\begin{enumerate}
\item \label{prop:nonempty}\textbf{Nonempty}:  If $X$ exhausts $Y$ then $\pi_X([A])$ is nonempty.
\item \label{prop:restrictable}\textbf{Restrictable}:  For any $Z$ such that $X \subset Z \subset Y$, $\pi_X(A) = \pi_X(\pi_Z(A))$.
\item \textbf{Coarsely Surjective}:  The $1$-neighborhood of $\pi_X(\DS(Y)))$ is all of $\DS(X)$.  If $X$ exhausts $Y$, then the $1$-neighborhood of $\pi_X(\DSnosep(Y))$ is all of $\DS(X)$.
\item \label{prop:Lipschitz}\textbf{Lipschitz}:  If there exists a geodesic $\gamma$ from $[A]$ to $[B]$ in $\DS(Y)$ such that projection to $X$ of every vertex in $\gamma$ is defined then
    $$d_{X}([A],[B]) \leq d_{Y}([A], [B]).$$
If there exists a geodesic $\gamma$ from $[A]$ to $[B]$ in $\DSnosep(Y)$ such that the projection to $X$ of every vertex in $\gamma$ is defined then
    $$d_{X}([A],[B]) \leq d^{nosep}_{Y}([A], [B]).$$
\end{enumerate}
\end{proposition}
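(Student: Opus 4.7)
The plan is to handle each property by placing the relevant spheres in Hatcher normal form with respect to an appropriate reference system, so that the combinatorics of pieces and dual trees can be read off directly, and then appeal to the well-definedness in Theorem~\ref{thm:well-defined} to pass to isotopy classes. I would dispose of Nonempty and Lipschitz first, as these are the most direct. For Nonempty, note that the hypothesis that $X$ exhausts $Y$ forces $X$ to be the unique component of $Y \setminus \partial X$: any other component would be an open subset of $Y$ disjoint from $X$, contradicting $\overline{X}=Y$. Placing $A$ in Hatcher normal form with respect to $\partial X$, every piece of $A$ lies in $X$; if $A \cap \partial X = \emptyset$ then $A$ itself is an innermost component of $A \cap X$, and otherwise the dual graph of the pieces of $A$ is a tree (since $A$ is simply connected), whose leaves are disk pieces with a single boundary circle. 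These leaves are innermost in $A$ and, by the normal-form condition forbidding disk pieces parallel to subsets of $\partial X$, essential in $X$, so they populate $\pi_X(A)$. For Lipschitz, given adjacent $[A_1], [A_2]$ in $\DS(Y)$ (or in $\DSnosep(Y)$), realize $A_1, A_2$ as disjoint representatives simultaneously in Hatcher normal form with respect to $\partial X$; then $\pi_X(A_1) \cup \pi_X(A_2)$ is a pairwise-disjoint family of disks and spheres in $X$ spanning a simplex of $\DS(X)$, so $d_X([A_1],[A_2]) \leq 1$. Concatenating along a geodesic on which $\pi_X$ is defined everywhere yields both Lipschitz inequalities.

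For Restrictable, place $A$ in Hatcher normal form with respect to the combined system $\partial X \cup \partial Z$. The topological key is that since $X \subset Z$, paths inside $A \cap X$ cannot cross $\partial Z$, so every component $D$ of $A \cap X$ lies inside a single component $C$ of $A \cap Z$ and is itself a component of $C \cap X$. Consequently $\pi_X(A)$ equals the union, over all components $C$ of $A \cap Z$, of the disk and sphere components of $C \cap X$. The equation $\pi_X(A) = \pi_X(\pi_Z(A))$ then reduces to checking that only innermost $C$, i.e.\ $C \in \pi_Z(A)$, can contribute: if a non-innermost $C$ (with two or more boundary circles on $\partial Z$) possessed a disk component $D$ of $C \cap X$ innermost in $A$, then the piece of $A$ in $Y \setminus Z$ adjacent to $D$ across $\partial D$ would have to be a disk parallel (rel boundary) to a subset of $\partial X \cup \partial Z$, contradicting the Hatcher normal-form condition~(2).

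For Coarsely Surjective, given $[D] \in \DS(X)$ I would construct an explicit witness: if $D$ is a sphere essential in $Y$ take $A := D$, and if $D$ is a disk (with $\partial D$ on a sphere $S$ of $\partial X$ interior to $Y$) take $A := D \cup D_0$, where $D_0$ is one of the two disks of $S \setminus \partial D$, smoothed to an embedded essential sphere in $Y$. Either way $\pi_X(A) = \{D\}$, so $[D] \in \pi_X(\DS(Y))$; the remaining edge case in which $D$ is a sphere inessential in $Y$ can be handled by using that $D$ is then parallel in $Y$ to a sphere of $\partial X$ and choosing $A$ via a nearby cap construction. For the nonseparating version, under the hypothesis that $X$ exhausts $Y$, if the constructed $A$ happens to be separating I apply the argument in the proof of Proposition~\ref{prop:Stonosep} to find an essential nonseparating sphere $A'$ disjoint from $A$ in $Y$ (using that $Y$ is not a $3$-ball, so at least one component of $Y \setminus A$ has nontrivial fundamental group). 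Then $\pi_X([A]) \cup \pi_X([A'])$ spans a simplex of $\DS(X)$, so $\pi_X([A']) \in \pi_X(\DSnosep(Y))$ lies within distance $1$ of $[D]$. The step I expect to require the most care is Restrictable, where the piece-level containment inside each $C$ is immediate but verifying that non-innermost $C$'s contribute nothing requires a careful chase of the pieces of $A$ in $Y \setminus Z$ leading to a parallel-disk contradiction with the normal form.
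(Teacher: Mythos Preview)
Your approach is essentially that of the paper: Nonempty and Lipschitz are handled the same way (the paper is slightly more careful in Lipschitz, explicitly noting that after putting $A$ into normal form one can then isotope $B$ into normal form without creating intersections with $A$), and Coarsely Surjective in both cases rests on the capping construction.

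Two points deserve comment. First, in Coarsely Surjective your claim $\pi_X([A])=\{[D]\}$ is not quite right: once $A=D\cup D_0$ is put into Hatcher normal form with respect to $\partial X$ it is disjoint from $\partial X$, so $\pi_X([A])=\{[cap(D)]\}$, which is merely \emph{adjacent} to $[D]$ --- still enough for the $1$-neighborhood statement. For the nonseparating version the paper's argument is cleaner than yours: since $X$ exhausts $Y$, each component of $\partial X$ is nonseparating in $Y$, and that boundary sphere is the connect sum of the two possible caps, so at least one cap is already nonseparating. This avoids the detour through Proposition~\ref{prop:Stonosep}.

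Second, your Restrictable argument has a slip: $\partial D$ lies on $\partial X$, so the piece of $A$ adjacent to $D$ across $\partial D$ lies in $Z\setminus X$, not in $Y\setminus Z$, and there is no immediate parallel-disk contradiction. In fact one can arrange a non-innermost annular $C\subset A\cap Z$ containing an innermost disk $D$ of $A\cap X$ even in normal form, so the literal set equality you are chasing need not hold. What \emph{does} follow immediately from the definition (and is what the paper is invoking) is the inclusion $\pi_X(\pi_Z(A))\subseteq\pi_X(A)$: every component of $C\cap X$ for $C$ a component of $A\cap Z$ is already a component of $A\cap X$, and innermost is a property of the piece itself. Since both sides have diameter~$1$ by Theorem~\ref{thm:well-defined}, this inclusion is all that is needed.
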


\begin{proof}

The Nonempty property follows from the fact that if $X$ exhausts $Y$ then every component of $A-\partial X$ lives in $X$, so there is always at least one innermost component among $A-\partial X$.

The Restrictable property follows from the definition.

That $\pi$ is Coarsely Surjective follows from the fact that, given an essential disk $D$ in $X$, adjacent to $[D]$ in $\DS(X)$ are (the homotopy class of) the two essential spheres obtained by taking the union of $D$ and one of the two components of $\partial X - D$ adjacent to $\partial D$.  These two spheres are essential in both $X$ and $Y$.  If $X$ exhausts $Y$, then as components of $\partial X$ are nonseparating in $Y$, at least one of these two spheres must be nonseparating in $Y$:  the connect sum of a separating sphere and a nonseparating sphere is nonseparating.

That $\pi_X$ is coarsely Lipschitz follows from the fact that, for $[A]$ and $[B]$ adjacent vertices in $\DS(Y)$ or $\DSnosep(Y)$, there exist disjoint representatives $A$ and $B$ of the homotopy classes.  Assume without loss of generality that $A$ intersects $\partial X$ minimally.  If $B$ does not intersect $\partial X$ minimally, then it is straightforward to modify the proof of Lemma \ref{thm:well-defined} to show that there exists a homotopy which takes $B$ to have minimal number of intersections with $\partial X$ without introducing any intersections with $A$.  Thus, as $A$ and $B$ are disjoint, every component of $\pi_X(A)$ is disjoint from every component of $\pi_X(B)$, so $d_{X}([A],[B]) \leq 1$ when the projections are defined.
\end{proof}

\begin{theorem}[Behrstock Inequality]\label{thm:Behrstock}
Let $X, X' \subset Y \subset M_n$ be submanifolds with boundary such that $X$ exhausts $Y$, and let $S$ be an essential embedded sphere in $Y$.  If
    $$d_{X'}([\partial X],[S]) > 3$$
then
    $$d_{X}([\partial X'],[S]) \leq 3.$$
\end{theorem}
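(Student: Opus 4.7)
The plan is to construct an essential ``bridge'' sphere in $Y$ that is simultaneously disjoint from $S$ and from $\partial X'$, and then apply the well-definedness and Lipschitz properties of Section~\ref{sec:projection} to conclude.

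Place $S$ in Hatcher normal form with respect to $\partial X \cup \partial X'$ via Theorem~\ref{thm:Hatchernormalform}, so all pairwise intersections are minimal families of disjoint circles. The hypothesis $d_{X'}([\partial X],[S])>3$ forces $\pi_{X'}([S])$ to be defined, so $S\cap X'$ has an innermost component. The edge case $S \cap \partial X' = \emptyset$ gives $S \subset X'$ and disjoint representatives of $[\partial X']$ and $[S]$, so Proposition~\ref{prop:properties}(\ref{prop:Lipschitz}) immediately yields $d_X \leq 1 \leq 3$. Otherwise, choose a circle $c \subset S \cap \partial X'$ that is outermost on $\partial X'$, in the sense that one of the two disks $D_0 \subset \partial X'$ it bounds contains no other circle of $S \cap \partial X'$, together with a disk $D \subset S$ with $\partial D = c$ (an innermost piece of $S$ bounded only by $c$). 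The sphere $\widetilde D := D \cup D_0$, pushed slightly into $D$'s side of $\partial X'$, becomes a smoothly embedded sphere $\widetilde D^\epsilon$ in $Y$ disjoint from both $\partial X'$ and $S$: disjointness from $\partial X'$ is a consequence of the push, and disjointness from $S$ follows from $D$ being innermost in $S \cap X'$ (so $D$ admits a product neighborhood missing $S - D$) together with the outermost property of $c$ (so the collar above $D_0$ meets $S$ only along the annulus over $c$, which the push avoids after smoothing). Condition~(2) of Hatcher normal form prevents $D$ from being isotopic rel $c$ to $D_0$, so $\widetilde D^\epsilon$ does not bound a ball on the $D_0$-side; for $n \geq 1$ it likewise does not bound a ball on the other side, so $\widetilde D^\epsilon$ is essential in $Y$.

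Since $X$ exhausts $Y$, Proposition~\ref{prop:properties}(\ref{prop:nonempty}) ensures $\pi_X([\widetilde D^\epsilon])$ is defined. The length-two path $[\partial X'] \to [\widetilde D^\epsilon] \to [S]$ in $\DS(Y)$ has disjoint representatives on each edge and defined projections at every vertex, so the edge-by-edge projection argument from the proof of Proposition~\ref{prop:properties}(\ref{prop:Lipschitz}) gives $d_X([\partial X'],[\widetilde D^\epsilon]) \leq 1$ and $d_X([\widetilde D^\epsilon],[S]) \leq 1$. The triangle inequality for set-distances then yields $d_X([\partial X'], [S]) \leq 2 \leq 3$.

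The main technical obstacle is the simultaneous existence of a circle $c$ outermost on $\partial X'$ and bounding an innermost piece $D$ of $S$: these two conditions live on different spheres and need not be compatible at the same circle. The resolution is a combinatorial descent through the nesting trees of $S \cap \partial X'$ on both $S$ and $\partial X'$: when the naive choice fails, one iterates surgery to replace sub-disks of $D_0$ containing interior circles by innermost sub-disks of $S$ across those circles, using Hatcher condition~(2) at each stage to maintain essentiality. In the worst case this may require two sequential bridge spheres, which is precisely why the stated bound is $3$ rather than $2$. The hypothesis $d_{X'}([\partial X],[S])>3$ itself enters only to guarantee that $\pi_{X'}([S])$ is defined (and hence that an innermost piece of $S\cap X'$ from which to start the descent exists).
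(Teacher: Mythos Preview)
Your approach has a genuine gap, and it stems from misidentifying the role of the hypothesis $d_{X'}([\partial X],[S])>3$.

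You claim this hypothesis ``enters only to guarantee that $\pi_{X'}([S])$ is defined.'' But notice that your bridge-sphere construction never involves $\partial X$ at all. If it worked, it would produce, for \emph{any} two essential spheres $\partial X'$ and $S$ that intersect, an essential sphere $\widetilde D^\epsilon$ disjoint from both---hence $d_{\DS(Y)}([\partial X'],[S])\le 2$ unconditionally. Since $\SC(Y)$ has infinite diameter, this is false; so the construction cannot succeed in general, and your ``combinatorial descent'' paragraph cannot be completed as stated. The incompatibility you flag (a circle outermost on $\partial X'$ need not bound an innermost disk on $S$) is exactly the obstruction, and it is not repairable without bringing $\partial X$ into the argument.

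The paper's proof uses the hypothesis in an essential way. From $d_{X'}([\partial X],[S])>3$ one deduces that an innermost disk $D_X\in\pi_{X'}(\partial X)$ and an innermost disk $D_S\in\pi_{X'}(S)$ cannot be realized disjointly in $X'$. This forces arcs of $\partial X$ to cross the disk $D_S$; since $X$ exhausts $Y$, some innermost component $D$ of $S-\partial X$ lies inside $D_S$. That $D$ is simultaneously an element of $\pi_X(S)$ and, because $D\subset D_S\subset X'$, disjoint from $\partial X'$. No bridge sphere is needed: one exhibits directly a member of $\pi_X(S)$ at distance $\le 1$ from $\pi_X(\partial X')$, and the diameter bounds on projections give $d_X([\partial X'],[S])\le 3$.
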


\begin{proof}

Since $X$ exhausts $Y$, $\pi_X([S])$ and $\pi_X([\partial X'])$ are defined.  By possibly applying a homotopy to $X \subset Y$ which is trivial outside of a small neighborhood of $\partial X'$, we may assume without loss of generality that there are no triple intersection points between $\partial X$, $\partial X'$, and $S$.  Assume that $d_{X'}([\partial X],[S]) > 3$.

\begin{figure}[!h]
\input{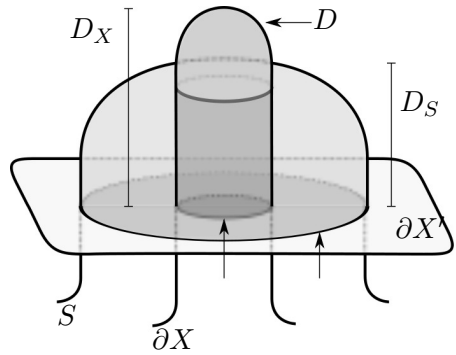}
\label{fig:Behrstock}
\caption{Labels from the proof of Theorem \ref{thm:Behrstock}.}
\end{figure}\todo{redraw}

Fix essential innermost components $D_{X} \in \pi_{X'}(\partial X)$, and $D_{S} \in \pi_{X'}(S)$.  Because by assumption $d_{X'}([\partial X], [S]) > 3$, we have that $D_{X}$ and $D_{S}$ cannot be realized disjointly rel $\partial X'$, for otherwise
    $$d_{X'}([\partial X], [S])
        \leq \diam \pi_{X'}[\partial X] + d_{X'}([D_{X}], [D_{S}]) + \diam \pi_{X'}[S]
        \leq 1 + 1 + 1  = 3.$$
As $D_{X}$ and $D_{S}$ cannot be realized disjointly, it follows that they intersect essentially relative to $\partial X$.  Since $X$ exhausts $Y$, it follows that there is an innermost component $D$ of $S - \partial X$ on $S$ which is contained in $D_{S}$.  As $D \subset D_{S}$, $D$ is disjoint from $\partial X'$.  Hence,
    $$d_{X}([\partial X'],[S])
        \leq \diam \pi_X[\partial X'] + d_{X}([\partial X'], [D]) + \diam \pi_X[S] \leq 1 + 1 + 1 = 3.$$
\end{proof}

Notice that all of the properties of projection discussed here are easiest to work with when we assume all submanifolds exhaust and we restrict our attention to the nonseparating disk and sphere complexes.

\section{Sphere Trees}\label{sec:spheretrees}

In this section, we begin by defining the notion of a sphere tree.  We show how a sphere tree is a combinatorial representation for viewing one sphere `from the viewpoint of' a given sphere system.  We then introduce Hatcher normal form for one sphere with respect to a given sphere system.  We show that every sphere can be represented by a sphere tree, with the representation related to the Hatcher normal form for the sphere.  We proceed by describing two ways of modifying sphere trees corresponding to isotopies of spheres, called the Bud Exchange move and the Bud Cancellation move.  We finally show how to use these moves to simplify a given sphere tree to represent a sphere that is in Hatcher normal form.  These moves will be used to discuss how to evolve a sphere tree along a folding path in outer space in the next section.

\subsection{Spheres From Sphere Trees}\label{sec:spheresfromspheretrees}

Fix a simple sphere system $\mA$ in $M_n$, and let $\Gamma$ denote the dual graph of $\mA$ in $M_n$, so $\Gamma$ is the marked metric graph in outer space representing the splitting corresponding to $\mA$.  Let $T$ denote the universal cover of $\Gamma$ together with the associated action of $F_n$, so in particular midpoints of edges of $T$ correspond to lifts of spheres in $\mA$ contained in the universal cover $\tilde M_n$ of $M_n$.

An \emph{(unconsolidated) sphere tree} with respect to $\mA$ is a finite subtree $T_S$ of $T$ together with a finite set of non-vertex marked points in $T_S$ called \emph{buds}, where we insist that each endpoint of $T_S$ is a bud (and hence each endpoint of $T_S$ is not a vertex).  The connected components of the complement of the buds in $T_S$ are called \emph{twigs}.  See Figure \ref{fig:unconsolidatedspheretree}.  We often identify the sphere tree with the underlying set $T_S$, but keep in mind that a sphere tree always has an associated set of buds.

\begin{figure}[!h]
\includegraphics[height=3in]{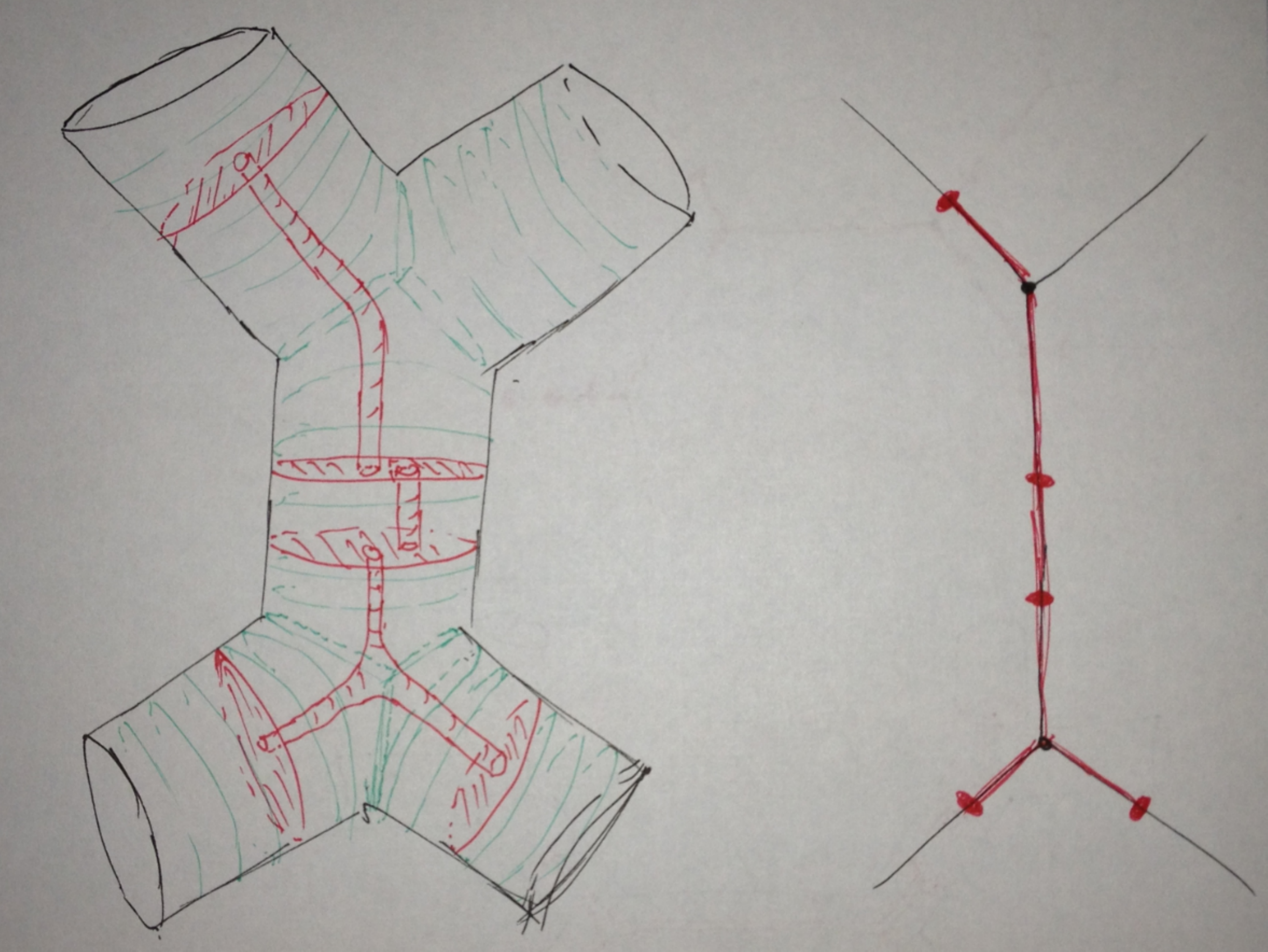}
\caption{On the right is a sphere tree (unconsolidated).  The tree $T$ is in black.  The buds of the sphere tree are red disks.  The twigs are the red paths connecting the buds.  This sphere tree has 5 buds and 3 twigs.  On the left is the associated sphere.  Its buds are spheres (shown here as disks, but each is glued to another disk in the other copy of the universal cover of the doubled handlebody (not shown) along its boundary).  Its twigs are embedded surfaces of genus 0 with 2 or 3 boundary components.  The singular fibration of $M_n$ is shown in green. \label{fig:unconsolidatedspheretree}}
\end{figure}\todo{redraw}

Because endpoints of $T_S$ are buds it follows that the finite subtree $T_S$ is equal to the convex hull of its buds.  Thus, we may define a sphere tree by simply specifying its set of buds.

In this section when we refer to sphere trees we are referring to unconsolidated sphere trees.  In future sections when we refer to sphere trees we will be referring to consolidated sphere trees, which will be defined at the end of this section.

We claim that a sphere tree in fact represents a sphere `from the viewpoint of' the sphere system $\mA$.  To support this claim we describe a construction which takes a sphere tree $T_S$ and produces a sphere $S \subset M_n$ (which will not necessarily be embedded).  See Figure \ref{fig:unconsolidatedspheretree}.  The universal cover $\tilde M_n$ can be obtained as a singular fibration over $T$ by associating to each non-vertex point of $T$ a 2-sphere and associating to each vertex of degree $k$ of $T$ the result of gluing $k$ disks along their boundaries.  This is equivalent to doubling a regular neighborhood of $T$, and gives us a fiber bundle projection from $\tilde M_n$ to $T$.  In $\tilde M_n$, start with the set of fibers over the buds of $T_S$.  These fibers each represent spheres in $\tilde M_n$ parallel to a lift of some sphere from the sphere system $\mA$.  We combine the spheres over all of the buds of $T_S$ into a single sphere in $\tilde M_n$ by taking a connected sum via tubes that follow the twigs of $T_S$.   Every twig introduces a tendril-like system of tubes connecting the spheres corresponding to the adjacent buds.  That system of tubes should be viewed as the boundary of a small neighborhood of the twig under the embedding $T_S \into \tilde M_n$.  The result is a sphere $\tilde S \subset \tilde M_n$ whose projection $S \subset M_n$ is called the \emph{sphere associated to $T_S$} (the subscript $S$ refers to this sphere, as we will usually begin with a sphere and associate to it a tree rather than vice versa).  We abuse notation and call the portions of $S$ coming from buds and twigs also by \emph{buds} and \emph{twigs}, respectively.

We now know that sphere trees represent spheres, but can any embedded sphere be represented by a sphere tree?  To answer this, we turn to Hatcher normal form.

\subsection{Sphere Trees From Spheres}\label{sec:spheretreesfromspheres}

We use Hatcher normal form we can show how to construct a sphere tree representing any given embedded sphere $S$ with respect to a given simple sphere system $\mA$.

Up to isotopy we may assume that $S$ is in Hatcher normal form with respect to $\mA$.  We choose a desired form for representatives of each isotopy class of piece of $S$.  Since $\mA$ is simple, each connected component of $M_n - \mA$ is homeomorphic to $M_{0,k}$ for some $k \geq 3$.\footnote{When $\mA$ is maximal it is automatically simple and each complementary component is $M_{0,3}$, which is homeomorphic to solid pair of pants $H_{0,3}$ doubled via the identity map along its boundary pair of pants.  Thus, when $\mA$ is maximal this decomposition of $M_n$ should be thought of as a pants decomposition.  Viewing simplices of the sphere complex as analogous to simplices in the curve complex, which are pants decompositions of the corresponding surface, should provide a point of reference for mapping class group theorists.}  Because $M_{0,k}$ has trivial fundamental group each piece $P$ of $S$ is separating, and there are only finitely many possible isotopy classes of pieces.  The isotopy class of $P$ is determined by which boundary components of $M_{0,k}$ it intersects and by the partition of components of $\partial M_{0,k} - \partial P$ it induces.

Our desired form will be defined in terms of a singular foliation of $M_{0,k}$ where all but a single leaf is a 2-sphere parallel to a boundary component of $M_{0,k}$.  The singular leaf is a union of $k$ disks glued identified along their boundary.  This is the foliation by fibers of $M_{0,k}$ induced by the fibration over the graph $Y_k$ with $1$ vertex of degree $k$ and $k$ vertices of degree 1.  Choose an embedding of $Y_k$ into $M_{0,k}$ that is transverse to the foliation, and so that the various $Y_k$ obtained in other components of $M_n - \mA$ all glue up to form the graph $\Gamma$.

Because $S$ is in Hatcher normal form, $P$ intersects each boundary component $b$ of $M_{0,k}$ at most once.  If $P$ does not intersect $b$ then we must specify which side of $P$ the boundary component $b$ is on.  If $P$ does intersect $b$ then the two components of $b - P$ are on opposite sides of $P$, and it only remains to specify which component of $b-P$ is on which side of $P$.  Thus there are 4 possible specifications for the position of $P$ with respect to $b$.  For each boundary component we take these specifications and choose a representative of the isotopy class of $P$ as in Figure \ref{fig:treeform}, so that our representative consists of boundary-parallel spheres (i.e. leaves of the foliation) called \emph{buds} of $S$ connected together via tendril-like tubes called \emph{twigs} of $S$ that stay `near' the graph $Y_k$.  Because $M_{0,k}$ is a 3-sphere with finitely many 3-balls removed and each piece is an embedded genus 0 orientable surface with boundary, these specifications uniquely determine the isotopy class of $P$.  See Figure \ref{fig:treeform}.

\begin{figure}[!h]
\includegraphics[height=2in]{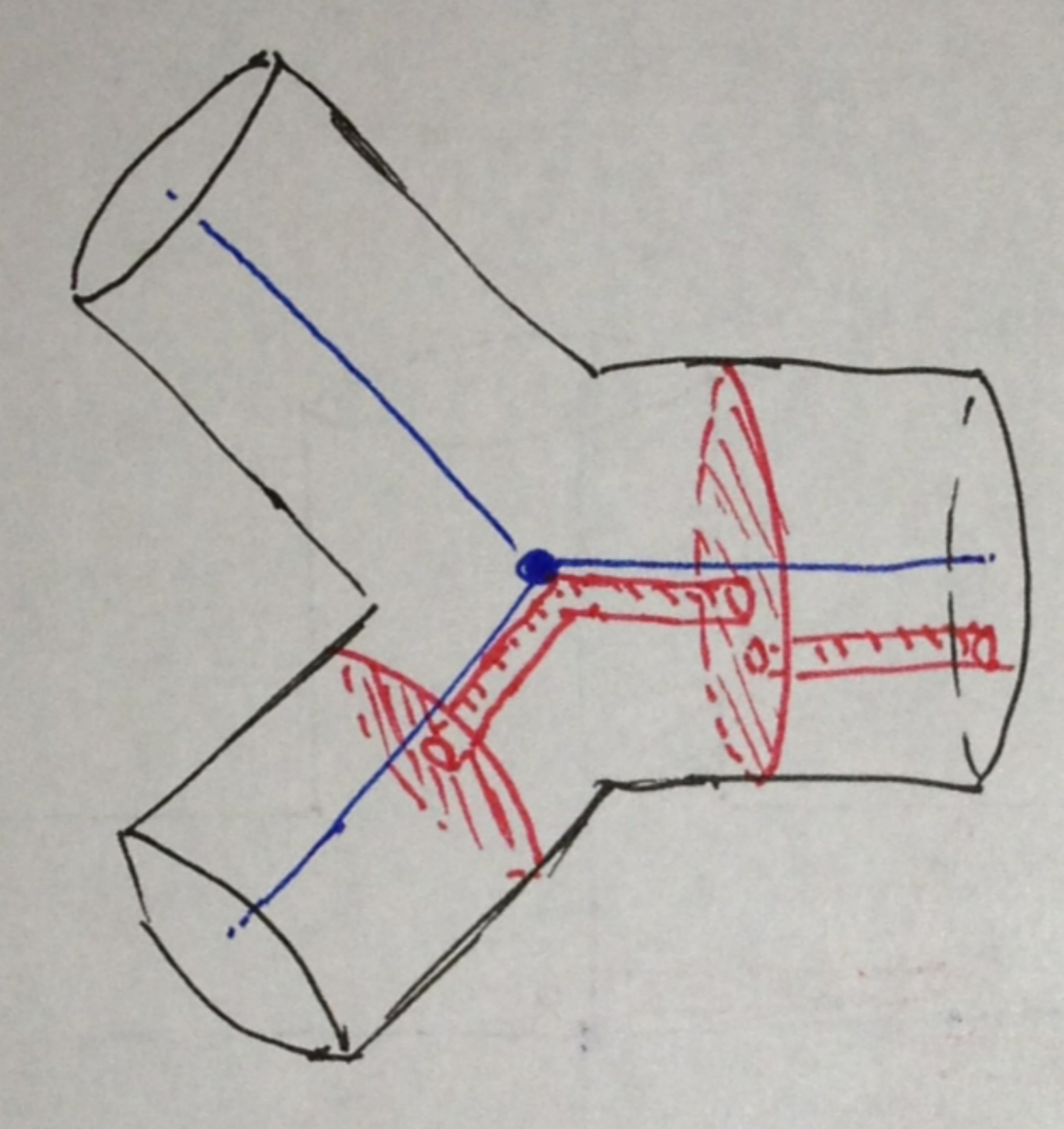}
\includegraphics[height=2.2in]{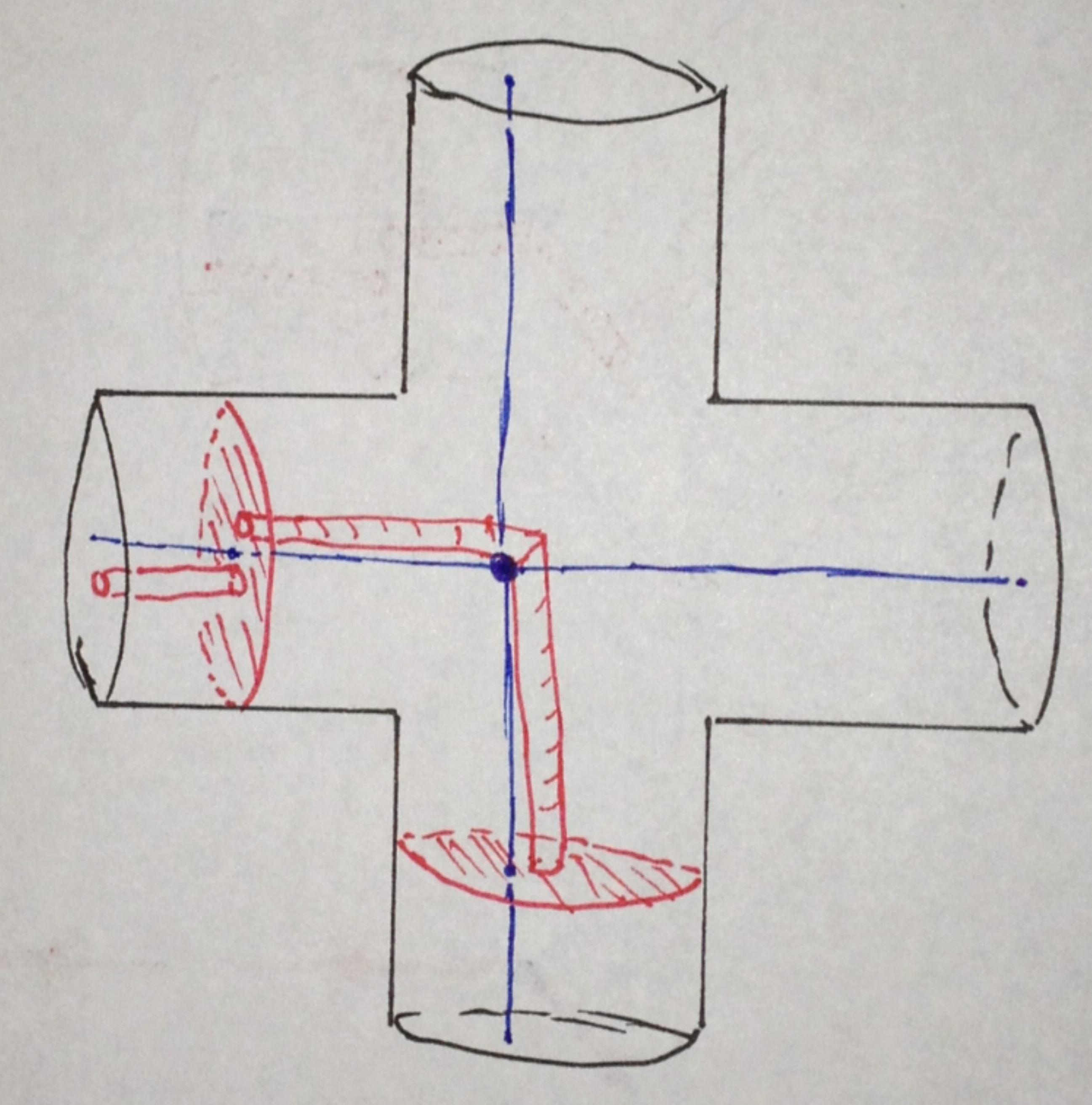}\\
\includegraphics[height=1.7in]{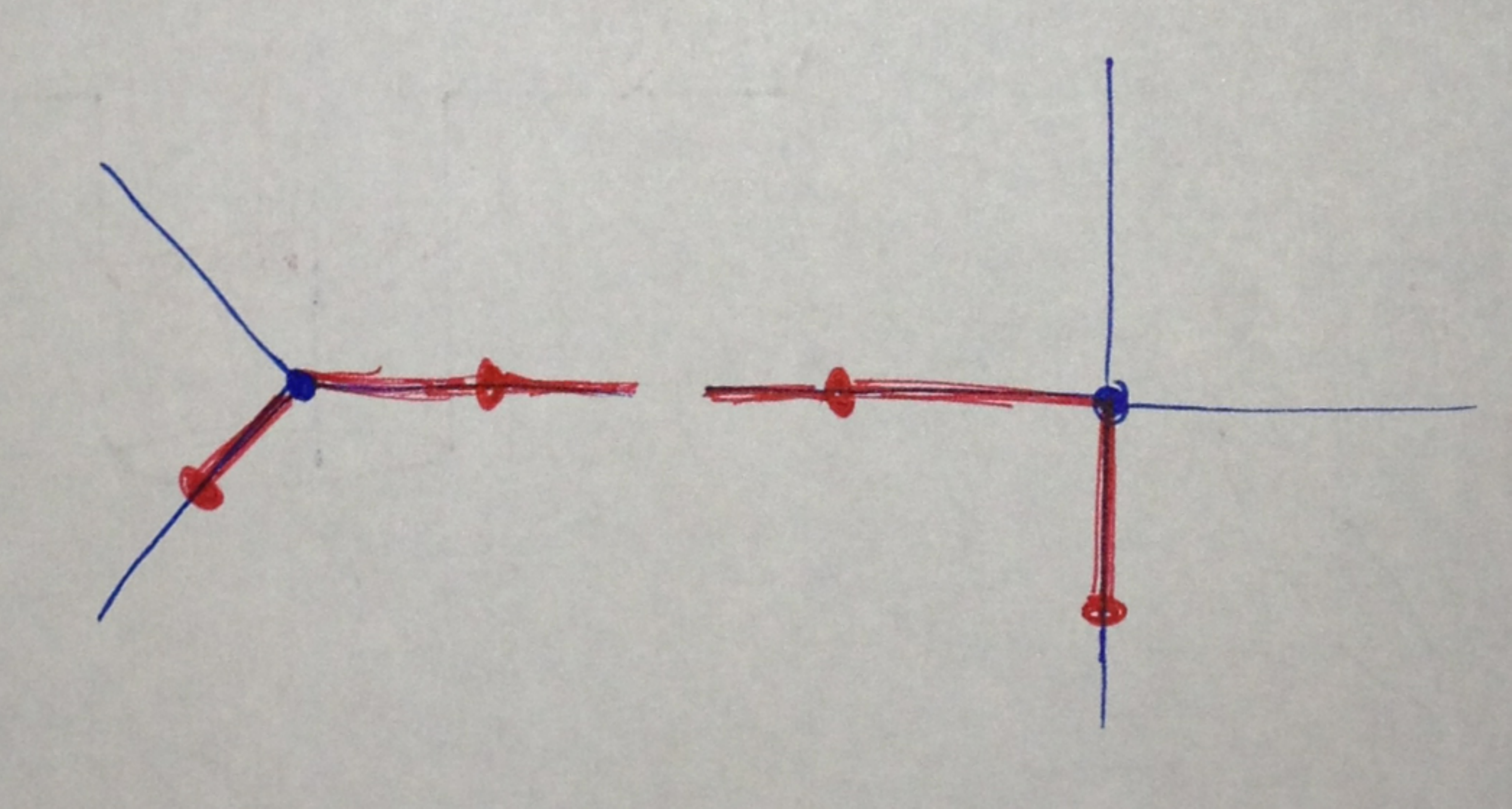}

\label{fig:treeform}
\caption{This figure shows how to choose the representative forms of each piece of the sphere $S$ in each complementary component of $\mA$, and then assemble these pieces into a sphere in tree form and produce the associated sphere tree.}
\end{figure}\todo{redraw}

That the desired forms for all of the pieces of the embedded sphere $S$ may be chosen so that $S$ is still embedded is not hard to see (apply isotopies of $M_{0,k}$ that fix each piece of $S$ in turn, making sure to not mess up previously fixed pieces or introduce any self-intersections).  If $S$ is in this desired form we say $S$ is in \emph{tree form} with respect to $\mA$.

With these desired forms for pieces of $S$, constructing a sphere tree that represents $S$ is now straightforward:  lift $S$ to $\tilde M_n$, obtain a finite subtree $T_S$ of $\tilde \Gamma = T$ by projecting $\tilde S$ to $T$ by collapsing each leaf of the singular foliation above to a point, and record as buds of the sphere tree all portions of $S$ designated as buds above.  That $S$ is the sphere associated to $T_S$ is clear by construction.  We call $T_S$ the \emph{sphere tree associated to $S$}.

\subsection{Moves on Sphere Trees}\label{sec:exchange}

Now that we have established the correspondence between sphere trees and spheres it is worth considering when two sphere trees represent isotopic spheres.  In this section we introduce two fundamental moves on sphere trees, both of which are induced by isotopy on the level of spheres.  In fact, we will see later (via evolving sphere trees along folding paths) that these two moves suffice to create the sphere tree of any essential embedded sphere with respect to any sphere system.

\subsubsection{Bud Exchange}  Let $T_S$ be a sphere tree, and let $v$ be a vertex of $T$ (that is not necessarily a vertex of $T_S$).  A bud $b$ of $T_S$ is \emph{adjacent} to $v$ if $b$ is on an edge $e_b$ incident to $v$ and there are no other buds on $e_b$ closer to $v$ than $b$.  Let $B$ denote a set of buds of $T_S$ adjacent to $v$.  Let $B^c$ denote a set of points, one per each edge adjacent to $v$ that does not contain a point of $B$, such that no bud of $T_S$ is between any point of $B^c$ and $v$.  We think of $B^c$ as a complementary set of buds for $B$.  Define a sphere tree $T'_S$ as the convex hull of the set of buds obtained from the set of buds of $T$ by replacing $B$ with $B^c$.  Bud Exchange is the result of exchanging $T_S$ for $T'_S$.  In short:\\\\
\textbf{Bud Exchange}:  \textit{At any vertex $v$ of $T$ any set of buds adjacent to $v$ can be exchanged for buds on the complementary set of edges adjacent to $v$}.\\

\begin{figure}[h]
\begin{center}
\begin{tabular}{m{80pt}m{15pt}m{80pt}}
\includegraphics[width=80pt]{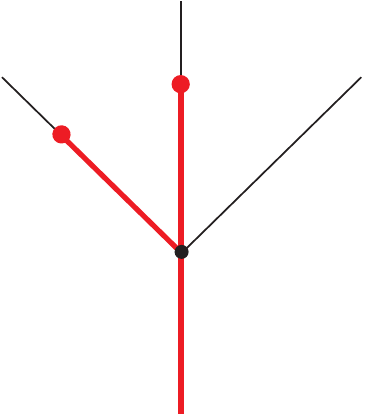}&$\longrightarrow$&
\includegraphics[width=80pt]{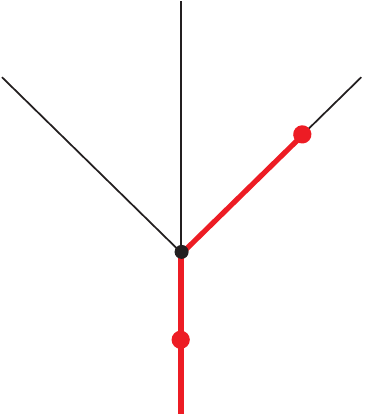}
\end{tabular}
\caption{The Bud Exchange move on sphere trees.  The central black point is the vertex $v$, while the other (red) points are the exchanged buds.}
\end{center}
\end{figure}

A set of points is \emph{innermost} in $T_S$ with respect to $v$ if no orbit of any bud of $T_S$ lies strictly between any one of the points and $v$.  A Bud Exchange move is \emph{innermost} if $v$ is adjacent to buds of both $T_S$ and $T_S'$ and the set $B \cup B^c$ of exchanged buds is is innermost.

\begin{lemma}[Bud Exchange]\label{lem:budexchange}
Two spheres associated to sphere trees which differ by a Bud Exchange move are homotopic.  If one of the spheres is embedded and the Bud Exchange move is innermost then the homotopy may be chosen to be an isotopy and the other sphere is also embedded.
\end{lemma}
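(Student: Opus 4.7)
The plan is to localize the analysis to a neighborhood of $v$, establish homotopy via a homological argument, and then sharpen to an ambient isotopy under the innermost hypothesis. First I would note that Bud Exchange only modifies buds on edges incident to $v$, so $T_S$ and $T_S'$ agree outside a small star neighborhood $N$ of $v$ in $T$. Pulling back through $\tilde M_n\to T$, one arranges representatives $\tilde S$ and $\tilde S'$ that coincide outside the fibered region $U\subset\tilde M_n$ over $N$; this $U$ is homeomorphic to $M_{0,k}$ with boundary spheres $\sigma_1,\dots,\sigma_k$ lying over the $k$ edges incident to $v$, and for sufficiently small $N$ the region $U$ embeds into $M_n$ (since $F_n$ acts freely on $T$). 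The subsurfaces $\tilde S\cap U$ and $\tilde S'\cap U$ are planar surfaces sharing the same boundary circles on $\partial U$.

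The homotopy claim follows from the fundamental relation $\sum_{i=1}^{k}[\sigma_i]=0$ in $H_2(U)$, valid because $\partial U$ bounds $U$. Geometrically, the exchange of $B$ for $B^c$ corresponds exactly to flipping which side of the separating peanut at $v$ is declared `inside': on one side $\tilde S\cap U$ sees the boundary spheres $\sigma_i$ with $i\in B$, while on the corresponding side $\tilde S'\cap U$ sees those with $i\in B^c$. Consequently $[\tilde S\cap U]$ and $[\tilde S'\cap U]$ agree in $H_2(U,\partial U)$ up to a sign (the sign being the orientation ambiguity inherent in an unoriented sphere), making $\tilde S$ and $\tilde S'$ homologous in $\tilde M_n$. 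Since $\tilde M_n$ is simply connected, Hurewicz yields $\pi_2(\tilde M_n)\cong H_2(\tilde M_n)$, so $\tilde S$ and $\tilde S'$ are homotopic in $\tilde M_n$; projecting to $M_n$ through $\tilde M_n\to M_n$ gives the desired homotopy $S\sim S'$.

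For the isotopy under the innermost hypothesis, I would construct an ambient isotopy of $U$ rel $\partial U$ taking $\tilde S\cap U$ to $\tilde S'\cap U$. The innermost condition ensures that no $F_n$-orbit of any bud of $T_S$ lies strictly between $B\cup B^c$ and $v$, so inside $U$ the only features of $\tilde S$ and $\tilde S'$ are the $B$- (respectively $B^c$-) bud spheres and the tubes connecting them through $v$. Both surfaces are embedded, share the same boundary on $\partial U$, and induce the same partition of $\{\sigma_1,\dots,\sigma_k\}$ on their two sides; the standard classification of embedded separating $2$-submanifolds in $M_{0,k}$ by such partitions yields an ambient isotopy rel boundary between them, realized concretely as a flow that drags the $B$-prongs back through the singular fiber over $v$ and pushes them out along the complementary $B^c$-edges. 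Extending by the identity outside $U$ and using the embedding $U\hookrightarrow M_n$, this yields an ambient isotopy of $M_n$ taking $S$ to $S'$, proving in particular that $S'$ is embedded. The main difficulty is this classification step: while the case of closed $2$-spheres in $M_{0,k}$ is classical, handling embedded planar surfaces with prescribed boundary and side-partition relies essentially on the innermost hypothesis to rule out collisions with other bud spheres during the isotopy.
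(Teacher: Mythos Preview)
Your Hurewicz approach is a genuine alternative to the paper's argument, but there is a real gap in the homological step. You assert that $[\tilde S\cap U]$ and $[\tilde S'\cap U]$ agree in $H_2(U,\partial U)$ and infer that $\tilde S,\tilde S'$ are homologous in $\tilde M_n$. The first assertion is vacuous: since $U\cong M_{0,k}$ is simply connected, $H_2(U,\partial U)\cong H^1(U)=0$, so every relative class vanishes and nothing is being said. What you actually need is that the closed cycle $P-P'$ (with $P=\tilde S\cap U$, $P'=\tilde S'\cap U$) is zero in $H_2(U)\cong\mathbb Z^{k-1}$, and since $H_2(U)\to H_2(\tilde M_n)$ is injective this is exactly as hard as the original claim. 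Verifying it requires tracking orientations on the bud spheres through the connect-sum construction and checking that exchanging $B$ for $B^c$ really invokes the relation $\sum_i[\sigma_i]=0$ with \emph{compatible} signs; your phrase ``up to a sign'' hides the whole difficulty. There is also an edge case your localization misses: when $B=\emptyset$ (or $B^c=\emptyset$) and all remaining buds lie in a single component $C_i$ of $T\setminus N$, the trees $T_S$ and $T_S'$ do \emph{not} agree outside $N$, so $\tilde S$ and $\tilde S'$ do not coincide outside $U$.

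The paper avoids all of this by arguing geometrically and by cases. It first treats the situation where $T_S$ consists only of the buds $B$; then $S$ and $S'$ are closed separating spheres in the single complementary component $C\cong M_{0,k}$ inducing the same partition of $\partial C$, hence isotopic by the classical classification. For general $T_S$ containing $v$, it writes $S$ as the connect sum of this local sphere $S_v$ with the rest, so the local isotopy extends to a homotopy (an isotopy under the innermost hypothesis). Finally it handles separately the case where $T_S$ does not contain $v$. Your isotopy argument, invoking a classification of properly embedded planar surfaces in $M_{0,k}$ rel boundary by induced partitions, is plausible but not standard; the paper only needs the classical closed-sphere version and reaches the general case by connect-summing, which is more elementary.
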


\begin{figure}[h]
\begin{center}
\begin{tabular}{m{10pt}m{150pt}m{10pt}m{150pt}}
&\fbox{\includegraphics[width=150pt]{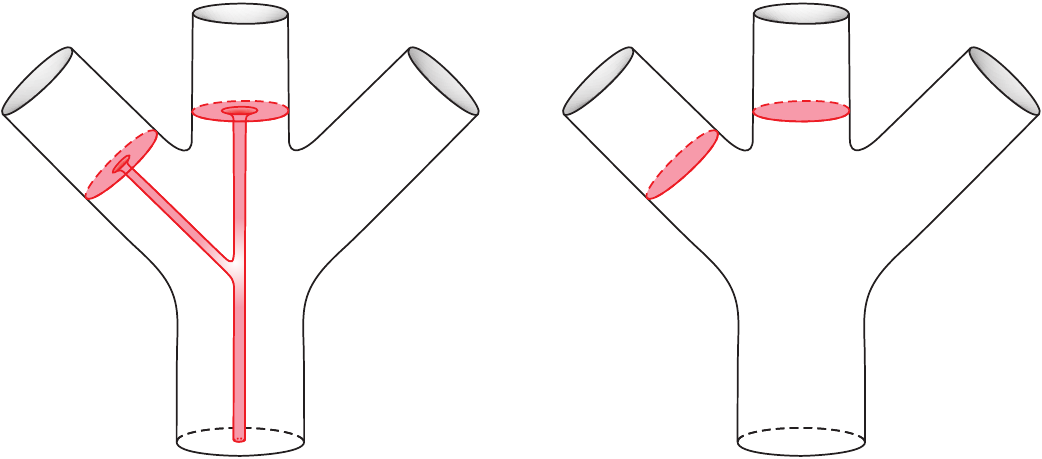}}&$\longrightarrow$&
\fbox{\includegraphics[width=150pt]{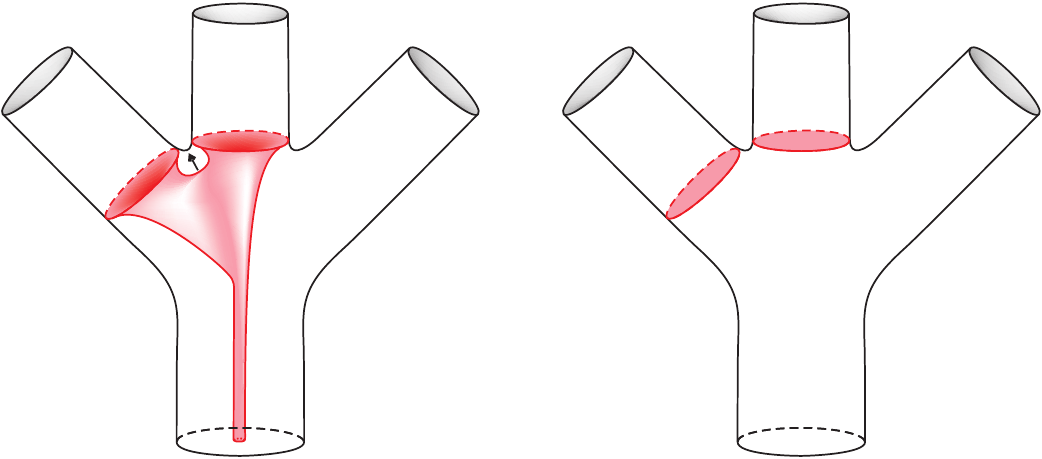}}\\\vspace{0.4cm}\\
$\longrightarrow$&\fbox{\includegraphics[width=150pt]{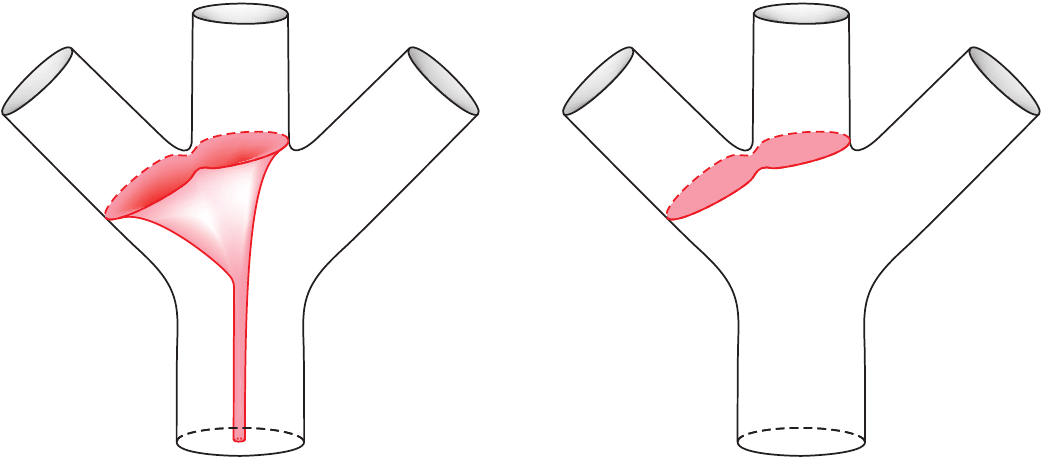}}&$\longrightarrow$&
\fbox{\includegraphics[width=150pt]{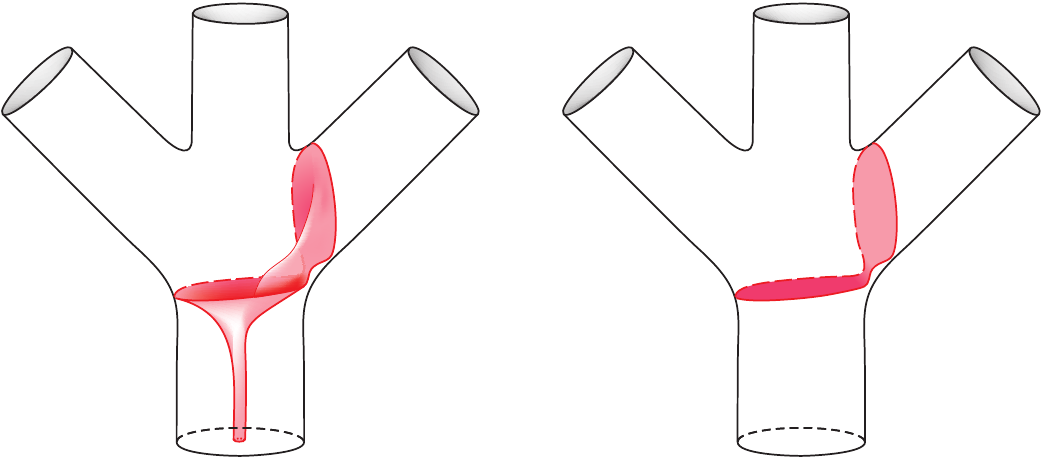}}\\\vspace{0.4cm}\\
$\longrightarrow$&\fbox{\includegraphics[width=150pt]{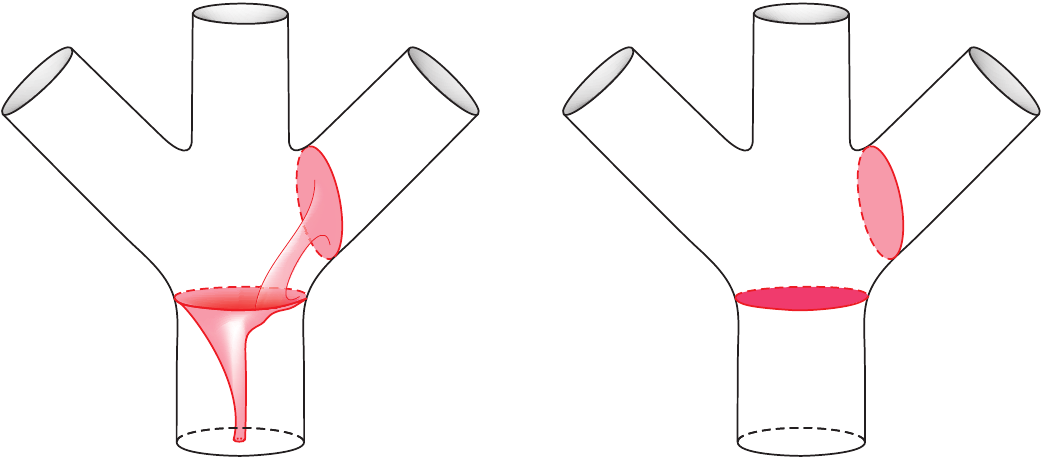}}&$\longrightarrow$&
\fbox{\includegraphics[width=150pt]{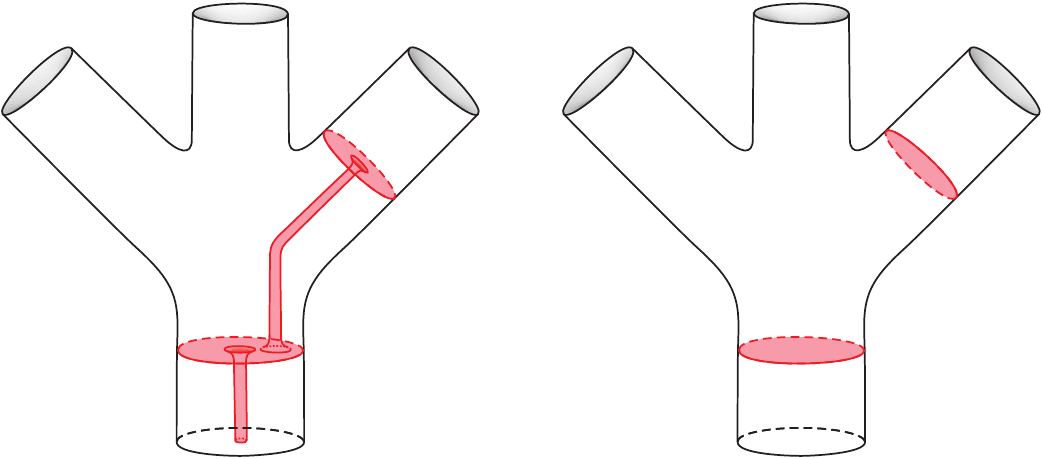}}\\
\end{tabular}
\caption{The Bud Exchange move on spheres.  Each box depicts two identical handlebodies, which are glued together via the identity map on their boundary to form the doubled handlebody.  Via this identification, the embedded surfaces depicted glue up in each case to become a disk.  The Bud Exchange move exchanges the first box for the last; the pictures show how to interpolate with an isotopy.}
\end{center}
\end{figure}

Notice that the Bud Exchange move can be formulated for spheres in tree form:  two spheres in sphere tree form that differ by exchanging a set of buds for a complementary set of buds about a vertex $v$ in the universal cover $T$ of $\Gamma$ are homotopic, and are isotopic if no other buds lie between the given buds and the image of $v$ in $M_n$.

\begin{proof}
Let $S$ denote a sphere with sphere tree $T_S \subset T$ and let $\mA$ denote the sphere system in $M_n$ corresponding to midpoints of edges of $T$.  Consider the sphere $S'$ corresponding to sphere tree $T_S'$, where $T_S$ and $T_S'$ differ by exchanging some set of buds $B$ of $T_S$ for a complementary set of buds $B^c$ of $T'_S$.

First consider the case when the only buds of $T_S$ are the buds $B$, so $T_S$ contains only the vertex $v$ of $T$.  In this case, the sphere $S$ associated to $T$ can be realized disjointly from the sphere system $\mA$, living in the component $C$ of $M_n - \mA$ containing the projection of $v$.  This component $C$ is a 3-sphere with $k$ 3-balls deleted, where $k$ is the degree of $v$.  The buds $B$ of $T_S$ correspond to boundary-parallel spheres in $C$. The sphere $S$ is the connected sum of these spheres, and is separating in $C$.  A separating sphere in $C$ is determined up to isotopy by the partition it induces on the boundary components of $C$.  In particular, the separating sphere $S'$ constructed as the connected sum of the boundary-parallel spheres corresponding to buds $B^c$ induces the same partition on boundary components.  Thus, $S$ and $S'$ are isotopic, and $T_S$ and $T'_S$ represent isotopic spheres.

Now consider the case when $T_S$ contains $v$ as well as other vertices.  In this case, let $S_v$ denote the sphere associated to the sphere tree whose set of buds is precisely $B$ (and whose underlying tree is the convex hull of $B$), and let $S'_v$ denote the sphere associated to the sphere tree with bud set $B^c$.  The sphere $S$ associated to $T_S$ is constructed as in Section \ref{sec:spheresfromspheretrees} to be the connected sum of $S_v$ with a number of other spheres corresponding to the remaining buds of $T_S$.  The isotopy between $S_v$ and $S'_v$ constructed in the previous paragraph induces a homotopy between $S$ and the sphere $S'$ associated to the sphere tree $T_S'$.  This homotopy is an isotopy if $S$ is embedded and no part of $S$ is `between' the spheres associated to $B$ and the spheres associated to $B^c$ in the component $C$ of $M_n - \mA$ containing $v$ -- that is, if $B \cup B'$ is innermost.

Finally, consider the case when $T_S$ does not contain $v$.  In this case, $B$ is at most one point.  If $B$ is nonempty then the previous paragraph still applies.  If $B$ is empty then $B^c$ consists of a bud in every direction from $v$.  The sphere $S_v$ from the previous paragraph is empty, while the sphere $S'_v$ is null-isotopic.  Let $b$ denote the bud of $T_S$ that is closest to $v$.  The homotopy from the sphere $S$ associated to $T_S$ and the sphere $S'$ associated to $T'_S$ begins by `pushing' a small disk from the bud of $S$ associated to $b$ towards $v$, forming a twig connecting the component $C$ of $M_n - \mA$ containing $v$ to $b$ by closely following the path between $b$ and $v$ in $T$.  This twig will intersect $C$ in a boundary-parallel disk.  The homotopy then proceeds via a homotopy between this disk and the null-homotopic disk consisting of a tube connecting the appropriate boundary component and $S'_v$.  As before, this homotopy is an isotopy if no part of $S$ is 'between' $b$ and $v$.
\end{proof}

Note that an exchange move is reversible by an exchange move, and if $S$ is in Hatcher normal form with respect to $\mA$ then $S'$ is too.

\subsubsection{Bud Cancellation}
Let $T_S$ be a sphere tree with an edge $e$ and two distinct buds $b_1$ and $b_2$ on $e$.  Let $T_S'$ denote the sphere tree obtained by deleting $b_1$ and $b_2$ from $T_S$, by removing $b_1$ and $b_2$ from the set of buds and gluing together the twigs adjacent to $b_1$ and $b_2$.  If either $b_1$ or $b_2$ is an endpoint of $T_S$, we also delete this consolidated twig to maintain that the endpoints of $T'_S$ are buds.  In short:\\\\
\textbf{Bud Cancellation}:  \textit{Two buds on the same edge cancel}.\\

\todo{insert bud cancellation picture for sphere trees}

The two buds $b_1$ and $b_2$ are \emph{innermost} in $T_S$ with respect to each other if no orbit of any other bud of $T_S$ lies between them.

\begin{lemma}[Bud Cancellation]\label{lem:budcancellation}
The sphere trees $T_S$ and $T'_S$ (obtained by cancelling two buds on the same edge from $T_S$) are associated to homotopic spheres.  If $S$ is embedded and $b_1$ and $b_2$ are innermost with respect to each other then the homotopy can be chosen to be an isotopy.
\end{lemma}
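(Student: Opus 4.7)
The plan is to reduce to a local statement in a tubular neighborhood of the portion of $e$ near $b_1$ and $b_2$.  Using the singular fibration of $M_n$ over $\Gamma$ from Section~\ref{sec:spheresfromspheretrees}, choose $N \subset M_n$ to be a neighborhood of this segment with $N \cong S^2 \times I$, where fibers correspond to points on $e$.  In the innermost embedded case, $N$ can be made small enough that $S \cap N$ is exactly the portion of $S$ associated to $b_1$, $b_2$, the twig between them, and short initial segments of the two outer twigs.  Topologically, $S \cap N$ is then a properly embedded annulus $A$: two parallel sphere slices $\Sigma_1, \Sigma_2$, each with two open disks removed (one for the central tube, one for an outer twig), joined by a cylindrical tube.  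Its boundary is two circles on $\partial N$, one per outer twig.  Correspondingly, $S' \cap N$ is a direct cylindrical tube $A'$ connecting the same two boundary circles.

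The key geometric observation is that the closed sphere $\Sigma_1 \# \Sigma_2$, obtained by joining $\Sigma_1$ and $\Sigma_2$ via the central tube, bounds an embedded $3$-ball in $N$: the slab $S^2 \times J$ between them, minus the open tube $D^2 \times J$, is the $3$-ball $(S^2 \setminus D^2) \times J \cong D^2 \times J$.  Using this $3$-ball as an isotopy region, I would push the annulus $A$ across it onto the direct tube $A'$, rel boundary.  Extending the isotopy by the identity outside $N$ yields an ambient isotopy of $M_n$ carrying $S$ to $S'$, which gives the isotopy statement under the innermost and embeddedness hypotheses.

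For the general homotopy statement, the same local modification becomes a homotopy rather than an isotopy.  Even if there are intermediate buds of $T_S$ between $b_1$ and $b_2$ on $e$ (which are preserved in $T_S'$) or other portions of $S$ pass through $N$, the bump constituted by $\Sigma_1 \# \Sigma_2$ still bounds the same kind of $3$-ball, and can be contracted through this ball to produce a homotopy from $S$ to $S'$ without any embeddedness constraint.

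The main obstacle is identifying the tubular neighborhood $N$ precisely so that $S \cap N$ really is the claimed annulus in the innermost embedded case.  This uses the tree form of $S$ with respect to $\mA$ from Section~\ref{sec:spheretreesfromspheres}: since $b_1$ and $b_2$ are innermost with respect to each other and $S$ is embedded, the twigs of $T_S$ other than the central twig between the buds leave $e$ (or proceed further out along $e$) only away from the segment between $b_1$ and $b_2$, so a sufficiently small $N$ contains none of them except the short initial pieces described above.  Once this identification is in hand, the remaining isotopy/homotopy argument is a standard exercise in $3$-manifold topology applied to the explicit $3$-ball.
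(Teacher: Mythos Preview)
Your approach is essentially the same as the paper's: both localize to a product region $S^2\times I$ containing the two bud fibers and the twig between them, observe that the connect sum of the two bud spheres via that twig is a null-isotopic sphere bounding an explicit $3$-ball, and push across that ball. The paper phrases this as ``the connected sum of two embedded isotopic spheres is null-isotopic'' and then reads off the effect on the adjacent twigs, while you make the $3$-ball $(S^2\setminus D^2)\times J$ explicit; these are the same argument.

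Two small points. First, there is a mild notational slip: you define $\Sigma_i$ as a sphere slice with \emph{two} disks removed, but then speak of $\Sigma_1\#\Sigma_2$ as a ``closed sphere''. What bounds the $3$-ball is the connect sum of the \emph{full} fiber spheres $\hat\Sigma_1,\hat\Sigma_2$ via the central tube; your annulus $A$ is that closed sphere minus the two outer-twig disks. This is clearly what you mean, but the notation should be adjusted.

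Second, and more substantively, your description of $S\cap N$ as an annulus with two outer twigs tacitly assumes neither $b_1$ nor $b_2$ is an endpoint of $T_S$. The paper explicitly treats the endpoint case: if, say, $b_1$ is terminal then $S\cap N$ is a disk rather than an annulus, and the null-isotopy of $S_{\text{cancelled}}$ simply caps off the single remaining twig rather than producing a direct tube between two twigs. Your $3$-ball argument adapts immediately to this case (push the disk across the ball to the small cap $D'\times\{1\}$), but you should say so.
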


\begin{figure}[h]
\begin{center}
\begin{tabular}{m{10pt}m{150pt}m{10pt}m{150pt}}
&
\fbox{\includegraphics[width=75pt]{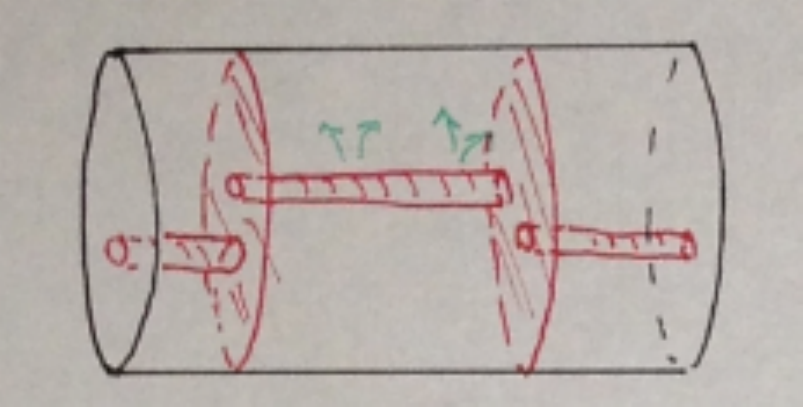}~~\includegraphics[width=75pt]{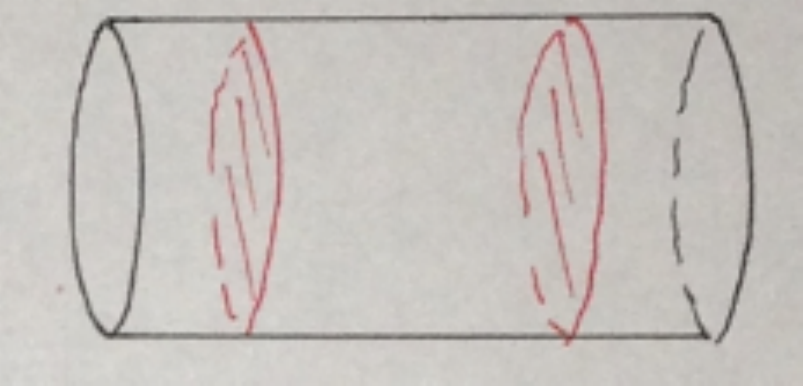}}
&$\longrightarrow$&
\fbox{\includegraphics[width=75pt]{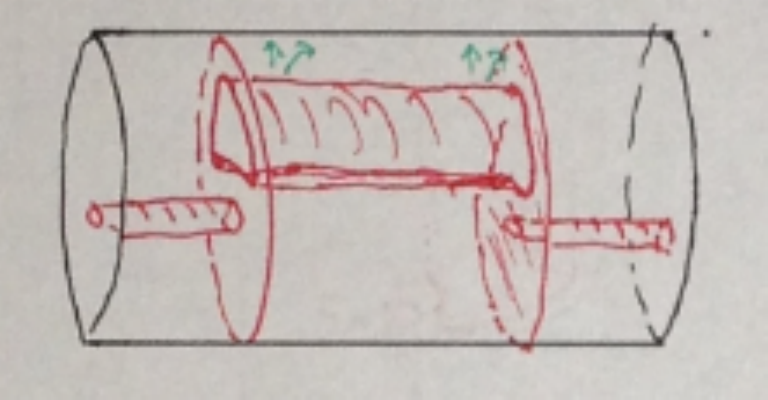}~~\includegraphics[width=75pt]{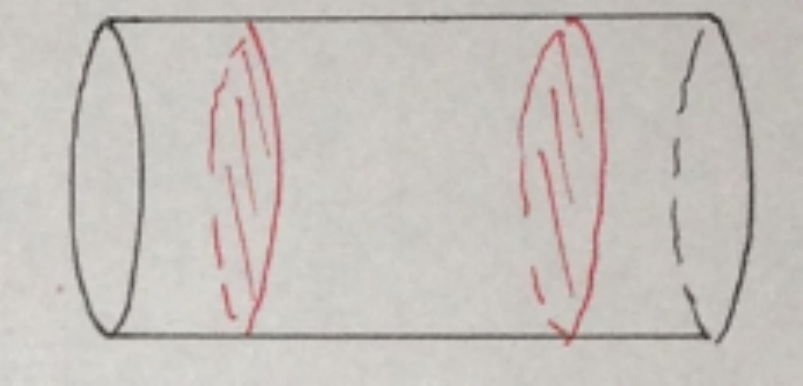}}
\\\vspace{0.4cm}\\$\longrightarrow$&
\fbox{\includegraphics[width=75pt]{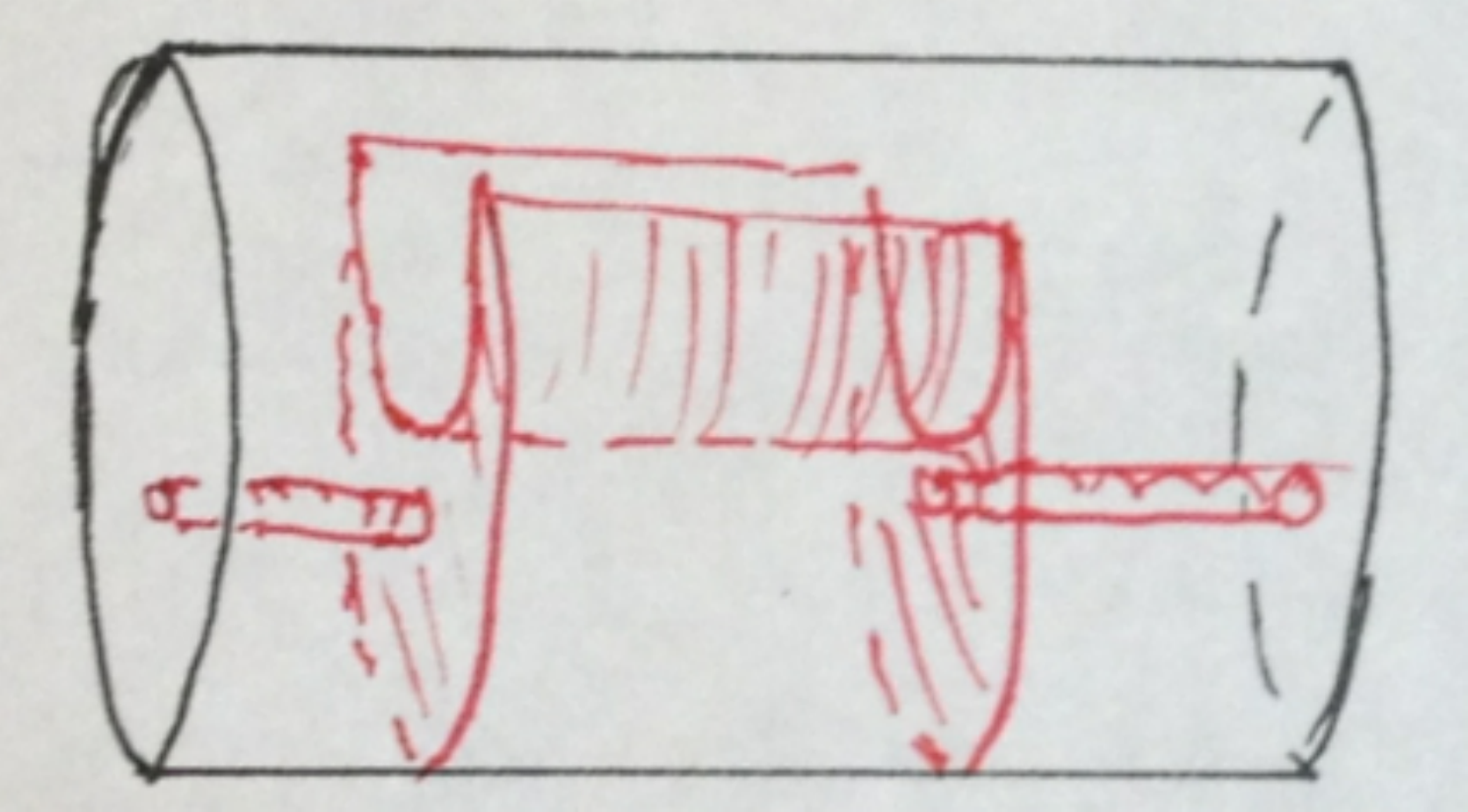}~~\includegraphics[width=75pt]{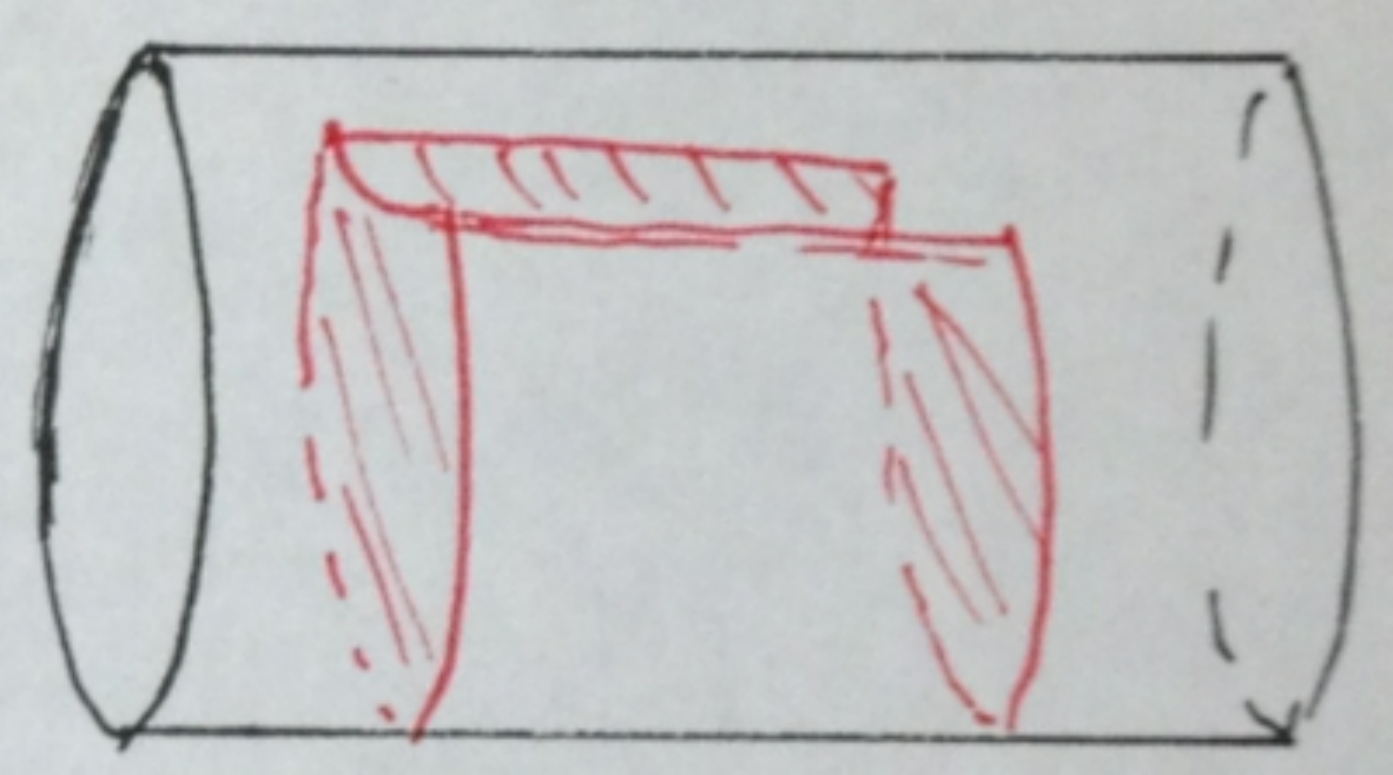}}
&$\longrightarrow$&
\fbox{\includegraphics[width=75pt]{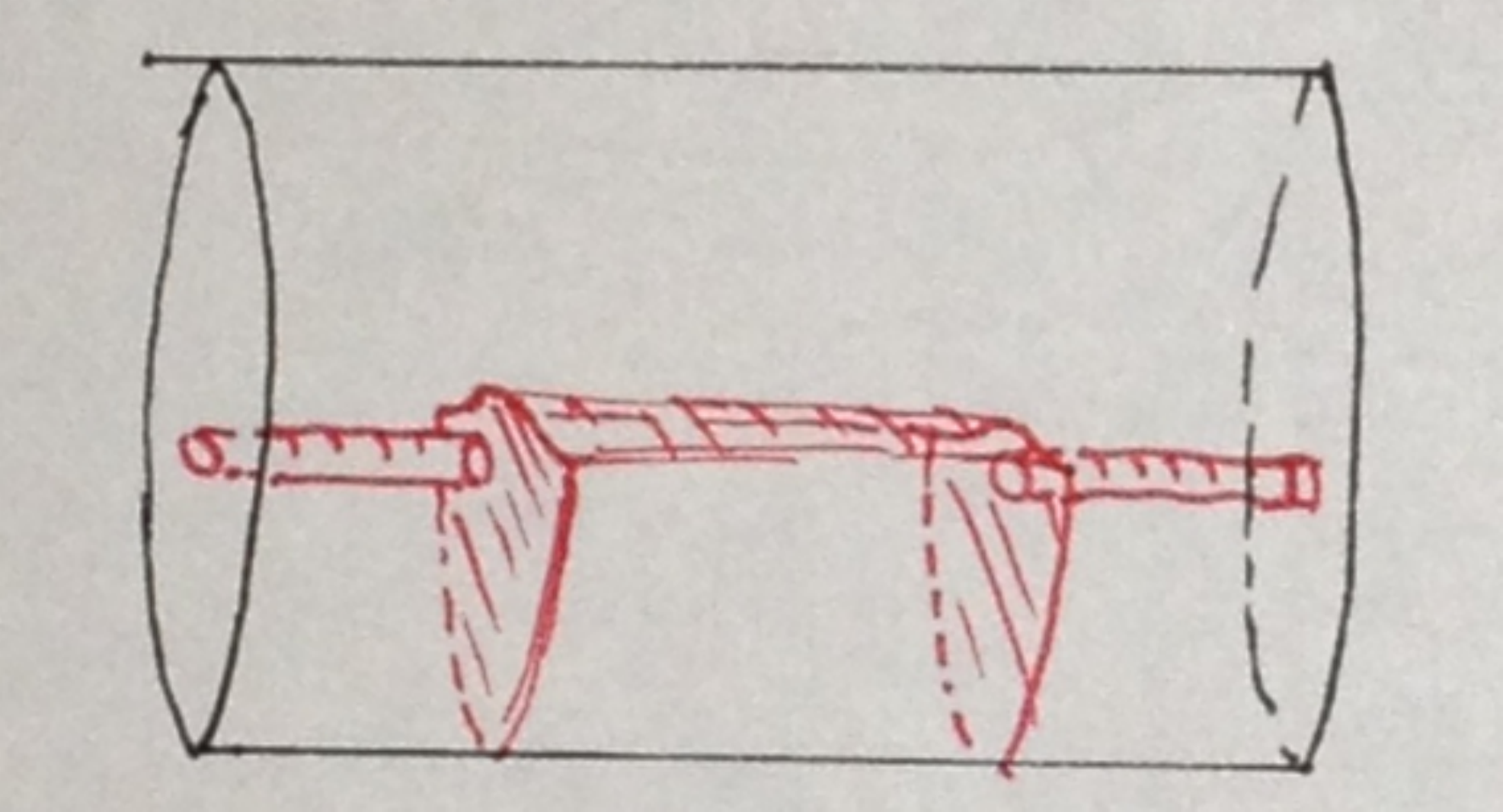}~~\includegraphics[width=75pt]{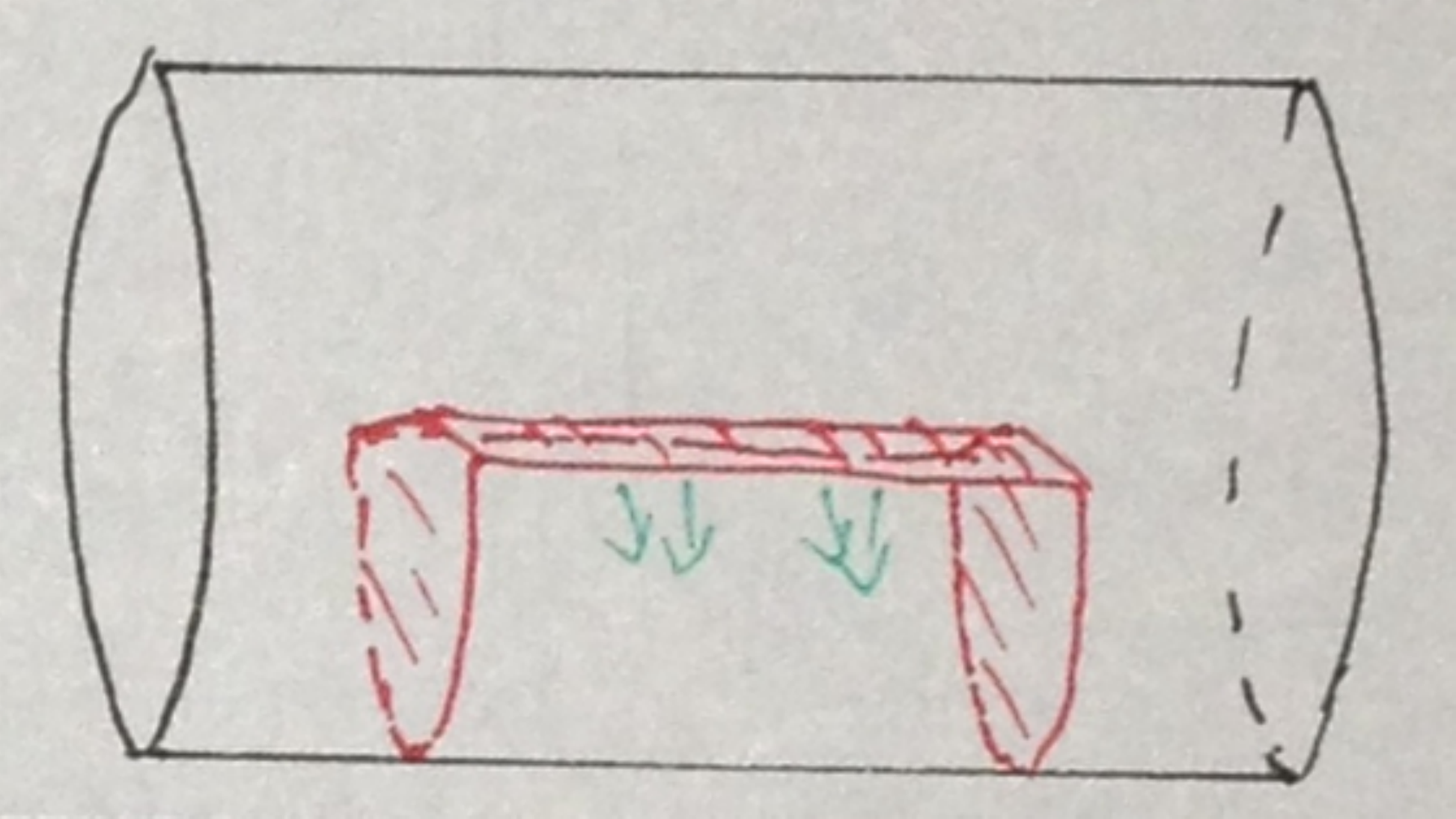}}
\\\vspace{0.4cm}\\$\longrightarrow$&
\fbox{\includegraphics[width=75pt]{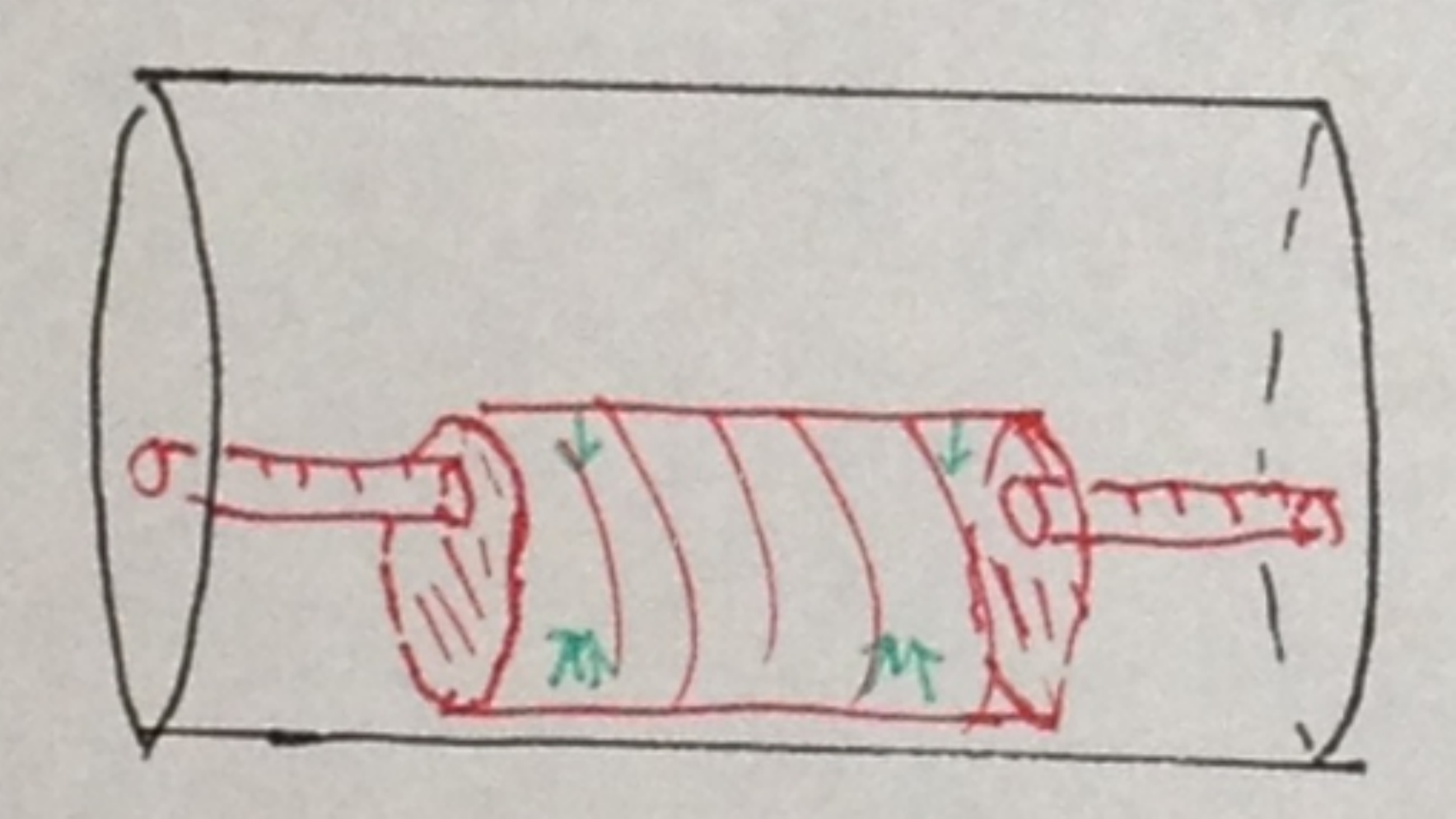}~~\includegraphics[width=75pt]{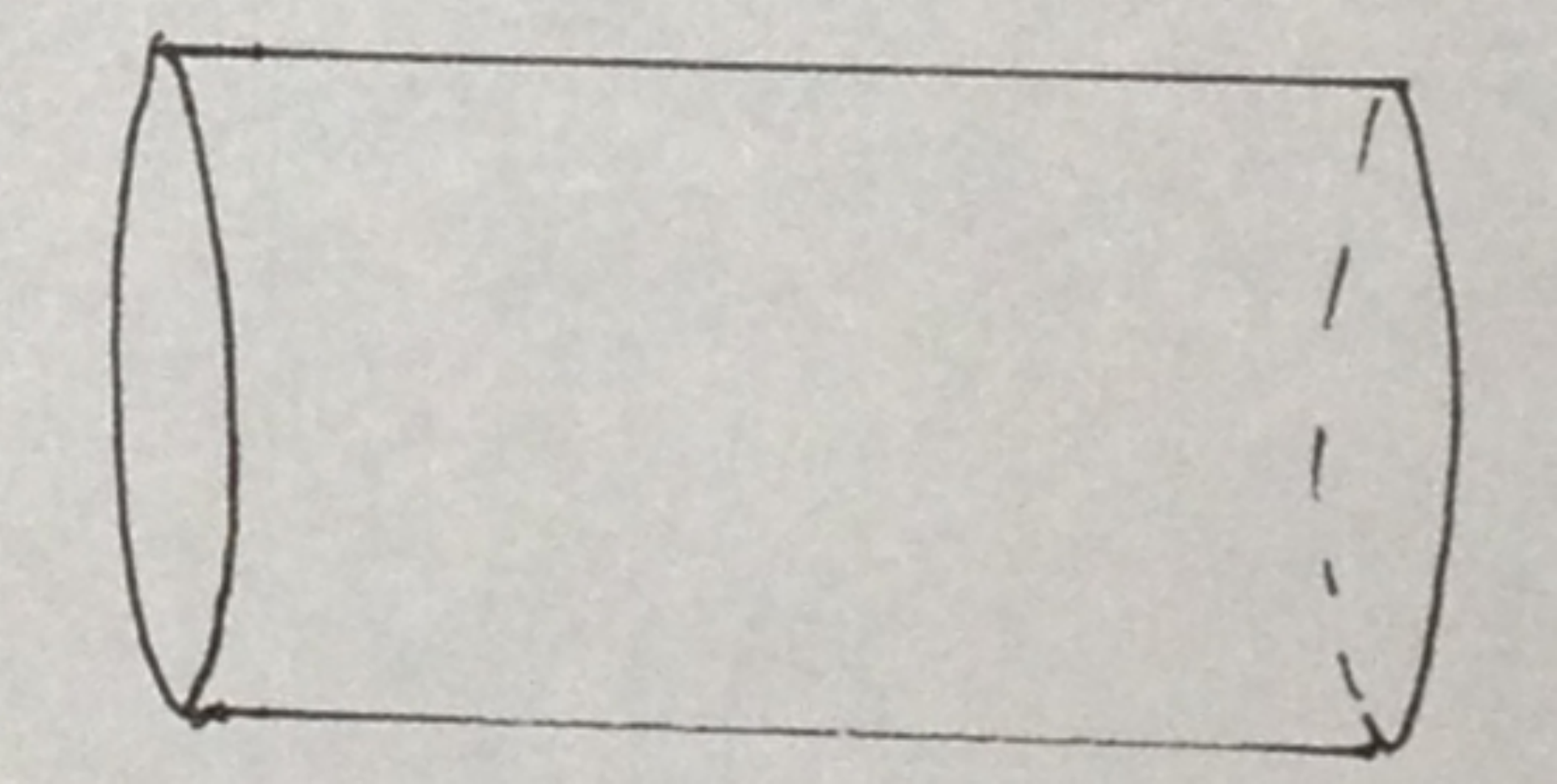}}
&$\longrightarrow$&
\fbox{\includegraphics[width=75pt]{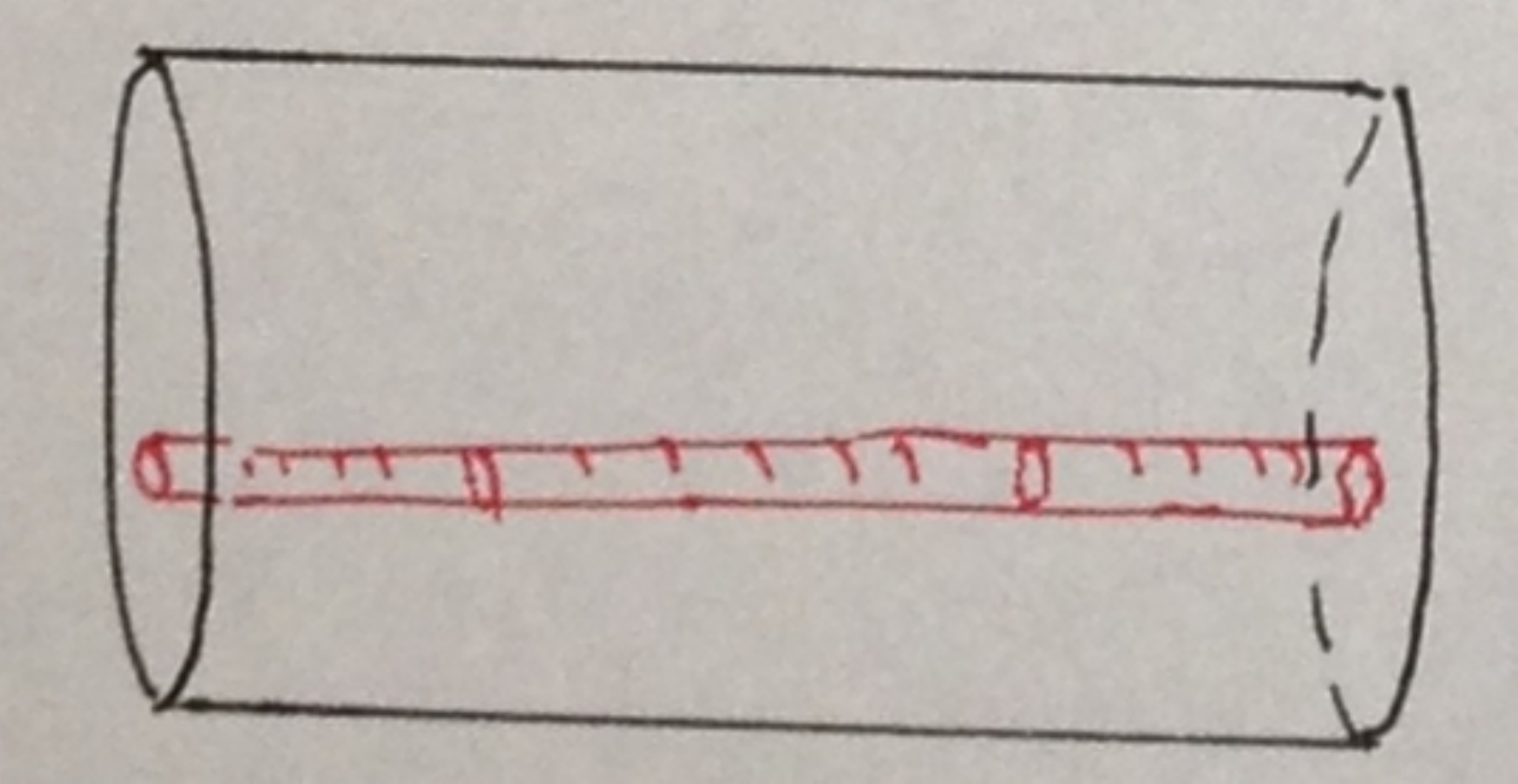}~~\includegraphics[width=75pt]{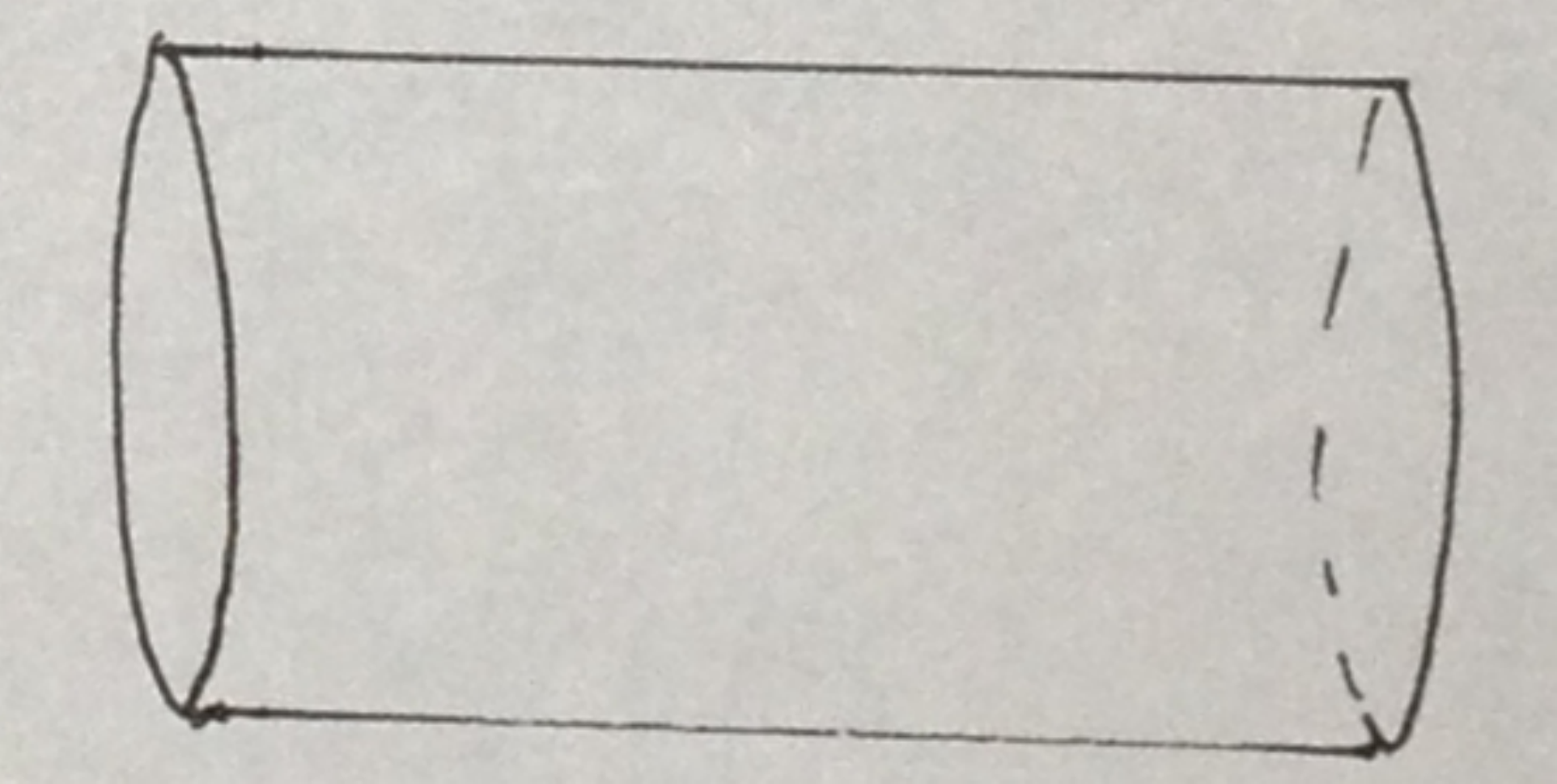}}
\\
\end{tabular}
\caption{The Bud Cancellation move on spheres.  Each box depicts two identical handlebodies, which are glued together via the identity map on their boundary to form the doubled handlebody.  Via this identification, the red embedded surfaces depicted glue up in each case to become an annulus.  The Bud Cancellation move exchanges the first box for the last; the pictures show how to interpolate with an isotopy.}
\end{center}
\end{figure}\todo{redraw}

Notice that, like Bud Exchange, Bud Cancellation move can be formulated for spheres in tree form:  two spheres in sphere tree form that differ by the inclusion or exclusion of two buds along a single edge of the universal cover $T$ of $\Gamma$ are homotopic, and are isotopic if no other buds lie between the given buds in $M_n$.

\begin{proof}
Two buds on the same edge connected by a twig are associated to the connected sum of two isotopic spheres via an annulus.  The connected sum of two embedded isotopic spheres is null-isotopic.  Let $S_{cancelled}$ denote the null-isotopic sphere associated to the sphere tree consisting of $b_1$ and $b_2$ connected by the twig between them.  Another twig connected to bud $b_i$ is associated to a surface with boundary connected to $S_{cancelled}$ by deleting a disk $D_i$ from $S_{cancelled}$ and identifying boundary components.  If only one of $b_1$ or $b_2$ is connected to another twig $t$ then the null-isotopy of $S_{cancelled}$ provides an isotopy of the resulting disk $S_{cancelled} - D_i$ to the deleted disk $D_i$.  The deleted disk $D_i$ is a cap on the surface with boundary associated to the twig $t$.  If both $b_1$ and $b_2$ are connected to twigs then $S_{cancelled} - D_1 - D_2$ is an annulus connecting these twigs together, the result of which is isotopic to a twig.  If $b_1$ and $b_2$ are innermost this local isotopy is in fact an isotopy of the sphere $S$ associated to $T_S$.  If $b_1$ and $b_2$ are not innermost then this local isotopy might introduce intersections with other buds of $S$ and so is only a homotopy.
\end{proof}

\subsection{Consolidated Sphere Trees}\label{sec:consolidated}

The isotopy conditions in the statements of the two moves on sphere trees suggest the an algorithm for simplifying a sphere tree, by applying the two simplification moves as much as possible.  To formally state this algorithm we need one more lemma.

\begin{lemma}\label{lem:existenceofcancellingbuds}
If $S$ is embedded and there exists an edge with more than one bud in $T_S$ then there exists a pair of buds in $T_S$ that are innermost with respect to each other.
\end{lemma}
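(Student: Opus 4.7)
The plan is to reduce the statement to an innermost-circle argument on the 2-sphere $\tilde{A}_e$ in the universal cover $\tilde{M}_n$, where $\tilde{A}_e$ is the lift of the $\mA$-sphere corresponding to the midpoint of $e$. First I would establish the dictionary: buds of $T_S$ on $e$ correspond bijectively to circles of $\tilde{S}\cap \tilde{A}_e$, while the $F_n$-orbits of buds, restricted to $e$, correspond to circles of $\pi^{-1}(S)\cap \tilde{A}_e$, where $\pi\co \tilde{M}_n \to M_n$ is the universal covering. Since $S$ is embedded and $\pi$ is a covering projection, $\pi^{-1}(S)$ is a disjoint union of embedded 2-spheres, so the circles in $\pi^{-1}(S)\cap \tilde{A}_e$ are pairwise disjoint on the 2-sphere $\tilde{A}_e$. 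By compactness of $\tilde{S}$ together with triviality of edge stabilizers in the Bass-Serre tree $T$ (free splittings of a free group have trivial edge groups), only finitely many lifts of $S$ meet $\tilde{A}_e$, each in finitely many circles; so this collection is finite.

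Next, a finite collection of disjoint circles on a 2-sphere has a nested dual tree structure, and by Hatcher normal form each bud is represented by a boundary-parallel sphere-cap bounding a 3-ball in one of the complementary components of $\tilde{\mA}$ adjacent to $\tilde{A}_e$. Sphere-caps lying in the same complementary component bound nested 3-balls (two disjoint embedded spheres in a 3-manifold bound either disjoint or nested 3-balls), and this nesting matches the ordering of the corresponding buds along $e$. Color each circle of $\pi^{-1}(S)\cap \tilde{A}_e$ \emph{red} if it comes from $\tilde{S}$ and \emph{blue} if it comes from some proper translate $g\cdot \tilde{S}$. By hypothesis we have at least two red circles, and the goal is to find two red circles that cobound an annular complementary region of $\tilde{A}_e\setminus \pi^{-1}(S)$ (equivalently, a pair that is adjacent in the full dual tree).

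I would proceed by a minimality/innermost argument. Among all pairs of red circles bounding nested 3-balls $B_1 \supsetneq B_2$ with no red sphere-cap in $B_1 \setminus B_2$, choose one minimizing the total number of sphere-caps of $\pi^{-1}(S)$ contained in $B_1 \setminus B_2$. If this number is zero, the corresponding buds are innermost and we are done. Otherwise, there is some sphere-cap $\Sigma$ from a translate $g\cdot \tilde{S}$ in $B_1 \setminus B_2$. Since $\tilde S$ separates $\tilde M_n$ into two components and $g\cdot\tilde S$ is disjoint from $\tilde S$, the entire sphere $g\cdot \tilde S$ lies in one component of $\tilde M_n \setminus \tilde S$; tracking which complementary region of $\tilde A_e \setminus \tilde S$ contains the circles of $g\cdot \tilde S \cap \tilde A_e$ constrains the position of $\Sigma$ relative to the red sphere-caps.

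The main obstacle will be this last step: showing that the presence of a blue sphere-cap in $B_1 \setminus B_2$ can always be used to \emph{descend} to a strictly smaller red configuration, not just to a smaller configuration using blue circles. The key is to argue that the nested sequence of sphere-caps in $\pi^{-1}(S)\cap \tilde A_e$ cannot consist entirely of blue ones between two red ones; because the full collection is finite and the innermost sphere-cap in the nesting bounds a 3-ball containing no other sphere-cap, iterating the innermost argument (and using triviality of edge stabilizers to rule out pathological accumulation) eventually yields two adjacent red circles, giving the required innermost pair of buds of $T_S$.
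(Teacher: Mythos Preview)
Your proposed dictionary is incorrect, and this undermines the whole approach. Circles of $\tilde S \cap \tilde A_e$ do \emph{not} correspond to buds of $T_S$ on $e$. In tree form, a bud on $e$ gives a bud-sphere that is a \emph{fiber} over a point of $e$, hence parallel to (and disjoint from) $\tilde A_e$; what actually meets $\tilde A_e$ are the twig-tubes passing through the midpoint of $e$. So the objects you are counting on $\tilde A_e$ are the wrong ones. More importantly, the ``innermost'' condition for two buds $b_1,b_2$ is a statement about the \emph{linear ordering} of bud-spheres along the interval direction of the product region over the edge, not about nesting of circles on a single cross-sectional $2$-sphere. Your attempt to reduce the problem to an innermost-circle argument on $\tilde A_e$ therefore models the wrong geometry, and the final paragraph correctly identifies that you cannot close the argument --- because the setup does not give you access to the structure that makes it work.

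The paper's proof is both simpler and uses the correct model. Work downstairs in $M_n$ and look at the product region $E\cong S^2\times I$ lying over the edge of $\Gamma$. All bud-spheres of $S$ whose buds have orbit meeting $e$ appear as fibers of $E$, linearly ordered along $I$. If $b_1,b_2$ are not innermost, some other bud $b_1'$ of $T_S$ has its bud-sphere strictly between those of $b_1$ and $b_2$. The twig joining $b_1$ to $b_2$ is an annulus in $E$ that must cross the level of $b_1'$; embeddedness of $S$ forces it through a hole in the $b_1'$ bud-sphere, so $b_1'$ carries a twig in $E$, which (again by embeddedness, since it cannot cross the $b_1$ or $b_2$ levels) terminates at another bud $b_2'$ with bud-sphere strictly between those of $b_1,b_2$. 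The pair $b_1',b_2'$ lies on a common edge of $T$, and the region between their bud-spheres in $E$ is strictly smaller. Iterate; finiteness of the bud set forces termination at an innermost pair. Recasting your argument in this $S^2\times I$ picture, rather than on a fixed $2$-sphere slice, will make it go through.
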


\begin{proof}
By definition, if two buds $b_1$ and $b_2$ are on the same edge of a sphere tree and they are not innermost then there must exist some third bud $b_1'$ whose orbit contains a point between $b_1$ and $b_2$.  Consider the submanifold $E$ of $M_n$ corresponding to the nonsingular leaves of the foliation of $M_n$ lying over the edge containing $b_1$ and $b_2$, as in Section \ref{sec:spheretreesfromspheres}.  This submanifold is a product manifold $S^2 \times I$ where $I$ is an interval.  The spheres associated to $b_1$, $b_2$, and $b_1'$ are all fibers of this fibration.  We form $S$ in part by taking the connected sum of the spheres associated to $b_1$ and $b_2$, introducing an annular twig in $E$ connecting them.  Since $S$ is embedded this twig cannot intersect the sphere associated to $b_1'$, so $b_1'$ must have a twig adjacent to it.  Similarly, this new twig cannot intersect either $b_1$ or $b_2$, and so must terminate in a sphere associated to another bud $b_2'$.  The bud $b_2'$ is on the same edge of $T_S$ with $b_1'$.  Since $T_S$ has finitely many buds, iterating this argument must eventually terminate in two buds which are on the same edge and are innermost.
\end{proof}

We can now state the simplification algorithm.\\\\
\textbf{Sphere Tree Simplification Algorithm.} Assume $S$ is an embedded sphere in sphere tree form with sphere tree $T_S$ in a tree $T$.  Repeatedly apply the following two operations until neither applies.
\begin{enumerate}
\item If two buds of $T_S$ lie on the same edge and are innermost with respect to each other, apply the Bud Cancellation Move.
\item If a vertex of $T$ is adjacent to ends of $T_S$ in all but possibly one direction and these buds are innermost with respect to $v$, apply the Bud Exchange Move.\\
\end{enumerate}

A similar algorithm works for non-embedded $S$ (or if the resulting sphere need not be embedded) by ignoring the `innermost' requirements.

By Lemma \ref{lem:existenceofcancellingbuds}, this algorithm terminates in a sphere tree where no edge of $T_S$ has more than one bud, and where no vertex of $T_S$ is adjacent to ends of $T_S$ in all but possibly one direction.  We call a sphere tree \emph{consolidated} if the steps of the Sphere Tree Simplification Algorithm do not apply to it.  From now on we will assume all sphere trees are consolidated unless stated otherwise.  Furthermore, when applying the Bud Exchange Move to a sphere tree (e.g. to evolve sphere trees in the next section), if the Bud Cancellation Move applies to the result then we automatically apply it so that the result is still consolidated.  Checking the definition, note that a sphere associated to a consolidated sphere tree is always in Hatcher normal form.  Also note that many consolidated sphere trees can represent that same isotopy class of sphere, because the locations of the buds can vary by application of the Bud Exchange and Bud Cancellation moves.


\section{Quasigeodesics:  Folding Paths and Fold Paths}\label{sec:folding}

There are two important definitions related to approximating geodesics for proposed curve complex analogues:  that of the projection of a folding path, as used by Bestvina and Feighn to approximate geodesics in the factor complex \cite{bestvina_f:hyperbolicity_of_ff11}, and that of a fold path, as defined by Handel and Mosher in their proof of hyperbolicity of the splitting complex \cite{handel_m:hyperbolicity_of_fs12} and used there as approximations to geodesics.  For our purposes, we find the (continuous) notion of a folding path to be more relevant than the (discrete) notion of a fold path.  It comes as no surprise that these two notions are closely related. In this section, we introduce both folding paths and fold paths, and we show that a certain set of projections of folding paths also forms a nice family of quasigeodesics in the sphere complex.

\subsection{Definitions and Motivation}

Let $X$ be a simplicial complex.  A \emph{path of simplices} in $X$ is a sequence of simplices $\sigma_0, \dots, \sigma_k$ in $X$ such that for all $i$ either $\sigma_i$ is a face of $\sigma_{i+1}$ or vice versa.  We think of a path of simplices as a piecewise constant map from an interval into the simplices of $X$ that is continuous with respect to the poset topology.  This latter interpretation is indeed the case in the situations we care about:  projections of Teichm\"uller geodesics to the curve complex; projections of folding paths in outer space to the sphere and factor complexes; and fold paths, which can be interpreted as projections of certain paths in outer space to the sphere complex.

A family of paths of simplices is \emph{almost transitive path family} if there exists a constant $D$ such that, for any two vertices $u$ and $v$ of $X$, there exists a path of simplices $\sigma_0, \dots, \sigma_k$ in the family with $d(u,\sigma_0)\leq D$ and $d(v,\sigma_k) \leq D$.  A path of simplices $\sigma_0, \dots, \sigma_k$ is an \emph{unparametrized quasigeodesic} between $x$ and $y$ in $X$ if there exists constants $K > 0$ and $C \geq 0$ and a nondecreasing function $\rho: \mathbb{Z} \to \mathbb{Z}$ such that for all $0 \leq i, j \leq k$,
    $$\frac{1}{K}|\rho(i)-\rho(j)| - C \leq d_X(\sigma_{\rho(i)},\sigma_{\rho(j)}) \leq K|\rho(i)-\rho(j)| + C,$$
and moreover
    $$d_X(\sigma_{\rho(i)},\sigma_{\rho(i+1)}) \leq C.$$
A family of paths of simplices is a family of \emph{uniform unparametrized quasigeodesics} if the constants $K$ and $C$ can be chosen uniformly over the whole family.

As stated in the introduction, the motivation for the current approach to understanding the geometry of $\Out$ comes from the study of the mapping class group.  In that setting, Masur and Minsky \cite{masur_m:curve_complexI} proved:

\begin{theorem}[\cite{masur_m:curve_complexI}, Theorem 2.6.]
The set of projections of Teichm\"uller geodesics forms an almost transitive path family of uniform unparametrized quasigeodesics in the curve complex.
\end{theorem}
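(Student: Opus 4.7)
The plan is to project Teichmüller space $\mathcal{T}(\Sigma)$ to the curve complex $\CC(\Sigma)$ via a Bers marking construction, then exploit logarithmic convexity of extremal length along Teichmüller geodesics. First I would define $\pi \colon \mathcal{T}(\Sigma) \to \CC(\Sigma)$ by sending a Riemann surface $X$ to the collection of its simple closed curves of hyperbolic length at most the Bers constant $L(\Sigma)$. By Bers' theorem this collection is nonempty and of uniformly bounded cardinality, so $\pi(X)$ has bounded diameter in $\CC(\Sigma)$ and is coarsely well-defined. For almost transitivity: given any vertex $v = [\gamma]$, pick $X_v \in \mathcal{T}(\Sigma)$ in which $\gamma$ has hyperbolic length much smaller than $L$, so that $v \in \pi(X_v)$; projecting the Teichmüller geodesic from $X_u$ to $X_v$ then produces a path of simplices whose endpoints are within distance $1$ of $u$ and $v$ respectively.

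Next I would verify that $\pi$ is coarsely Lipschitz, yielding the upper half of the unparametrized quasigeodesic inequality. If $d_{\mathcal{T}}(X,Y) \leq 1$, Wolpert's inequality bounds the ratio of hyperbolic lengths of any simple closed curve between $X$ and $Y$, so the Bers markings at $X$ and $Y$ differ by a uniformly bounded number of elementary moves, each of which changes $\pi$ by bounded distance in $\CC(\Sigma)$. Summing along the Teichmüller geodesic then converts Teichmüller length into a linear upper bound on curve complex distance, and also shows that the projected path is continuous in the poset topology (so is a legitimate path of simplices).

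The main obstacle, and the heart of the original Masur--Minsky argument, is the lower bound. The strategy is to show that the extremal length $\mathrm{ext}_{g(t)}(\gamma)$ of any fixed simple closed curve $\gamma$ along a Teichmüller geodesic $g(t)$ is a log-convex function of $t$ with a single ``balance time'' where it is minimized. As a consequence, each fixed curve can appear as a Bers curve only on an interval of uniformly bounded Teichmüller length, and if $\alpha$ is short at time $t_1$ and $\beta$ at time $t_2$ with $|t_1-t_2|$ large, then $i(\alpha,\beta)$ grows exponentially in $|t_1-t_2|$. Combining this with the Hempel--Bowditch estimate $d_{\CC}(\alpha,\beta) \preceq \log i(\alpha,\beta) + 2$ (and its matching lower bound for curves realized with controlled intersection) converts Teichmüller separation into a linear lower bound on $d_{\CC}(\pi(g(t_1)), \pi(g(t_2)))$. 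Checking that all constants can be chosen independently of the geodesic then produces the \emph{uniform} unparametrized quasigeodesic property, completing the proof.
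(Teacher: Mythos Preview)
The paper does not supply a proof of this statement at all: it is quoted verbatim as Theorem~2.6 of \cite{masur_m:curve_complexI} and used only as motivation for the analogous results about folding paths and fold paths. So there is nothing in the paper to compare your argument against, and any assessment has to be against the original Masur--Minsky proof.

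Your sketch of the coarse definition of $\pi$, of almost transitivity, and of the coarse Lipschitz upper bound is fine. The genuine gap is in the lower bound. You propose to show that if $\alpha$ is short at time $t_1$ and $\beta$ is short at time $t_2$ then $i(\alpha,\beta)$ grows like $e^{|t_1-t_2|}$, and then to invoke ``the Hempel--Bowditch estimate $d_{\CC}(\alpha,\beta)\preceq\log i(\alpha,\beta)+2$ and its matching lower bound''. But there is no matching lower bound: the inequality $d_{\CC}\lesssim\log i$ goes only one way. Two curves can have arbitrarily large intersection number while remaining at distance $2$ in $\CC(\Sigma)$ (take any two curves disjoint from a common third curve). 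So exponential growth of intersection number along the geodesic does not by itself force $d_{\CC}(\pi(g(t_1)),\pi(g(t_2)))$ to be large, and your proposed mechanism for the lower bound collapses.

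The actual Masur--Minsky argument does not go through intersection number at all. It proves a \emph{contraction} (or ``nesting'') property: there is a constant $c$ such that for any Teichm\"uller geodesic $g$ and any point $X\in\mathcal{T}(\Sigma)$, if the nearest-point projection of $X$ to $g$ is far from $g(t)$ then $\pi(X)$ lies within $c$ of a fixed vertex depending only on $g(t)$. The proof of this uses the structure of the horizontal and vertical foliations of the quadratic differential defining $g$, realized via a sequence of nested train tracks that carry the short curves at the various times. The quasigeodesic lower bound is then a formal consequence of this contraction property (and indeed this is the step that, combined with a Cobounded criterion, also yields hyperbolicity of $\CC(\Sigma)$). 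If you want to repair your outline, the missing idea is precisely this train-track nesting/contraction lemma; log-convexity of extremal length alone is not enough.
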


Bestvina and Feighn \cite{bestvina_f:hyperbolicity_of_ff11} and Handel and Mosher \cite{handel_m:hyperbolicity_of_fs12} have managed to prove analogues of this theorem in the case of the factor complex and the sphere (equivalently, splitting) complex, which we describe in this section.

But first, we need some standard terminology.

Let $\Gamma$ be a simplicial graph (possibly infinite, possibly locally infinite, so in particular possibly an $\mathbb{R}$-tree).  A \emph{natural vertex} of $\Gamma$ is a vertex of degree at least 3, and a \emph{natural edge} is the closure of a component of the complement of the set of natural vertices in $\Gamma$.  A \emph{direction} at a point $x \in\Gamma$ is the germ of a non-degenerate embedded segment in $\Gamma$ beginning at $x$.  A \emph{turn} at $x$ is an unordered pair of distinct directions at $x$.  A subset of $D_x$ is called a \emph{gate} at $x$.  A \emph{train track structure} on $\Gamma$ is a partition of $D_x$ for each vertex $x$ into at least two gates.  A turn is \emph{illegal} with respect to a given train track structure if both directions are contained in the same gate, and \emph{legal} otherwise.  A path in $\Gamma$ is called \emph{legal} if the train track structure on the path induced by inclusion into $\Gamma$ has no illegal turns.

Now let $\Gamma'$ be another simplicial graph.  A \emph{morphism} $\phi \co \Gamma \to \Gamma'$ is a map such that every natural edge, which is isometric to an interval, can be subdivided into subintervals on which $\phi$ is an isometric embedding.  Each morphism $\phi$ induces a partition on the set of all directions at each vertex $x$ of $\Gamma$, where two directions $d$ and $d'$ at $x$ are in the same partition set if $D\phi_x(d) = D\phi_x(d')$.  If this partition defines a train-track structure on $\Gamma$, the morphism $\phi$ is called a \emph{train track map}.

\subsection{Outer Space and its Connection to the Sphere Graph}
\label{ssec:outer_space}

For us, all relevant graphs that will be involved in train track maps come from \emph{outer space}.  Outer space is a topological space first defined by Culler and Vogtmann in their seminal paper~\cite{culler_v:outer_space}, and should be considered an analogue to Teichm\"uller space for the mapping class group.  For more information about outer space we  refer the reader to the excellent survey article~\cite{vogtmann:aut_fn_and_outer_space}. We take the definitions below mostly from that paper. Note that for our purposes we will use an unprojectivized version of an outer space.

Let $R_n$ be the graph with one vertex and $n$ edges (we call such a graph a \emph{rose}). We will identify the free group $F_n$ with the fundamental group $\pi_1(R_n)$ of $R_n$ in such a way that the generators of $F_n$ correspond to single oriented edges of $R_n$.

\begin{definition}
The \emph{(unprojectivized Culler-Vogtmann) outer space} $\cv$ is the space whose points are equivalence classes of pairs $(\tau,\Gamma)$ where:
\begin{itemize}
\item $\Gamma$ is a graph with fundamental group $F_n$;
\item each edge of $\Gamma$ is assigned a positive real length, making $\Gamma$ into a metric space via the path metric;
\item each vertex of $\Gamma$ has degree at least $3$;
\item $\tau\colon R_n\to\Gamma$ is a homotopy equivalence, called the \emph{marking}; and
\item two pairs $(\tau,\Gamma)$ and $(\tau',\Gamma')$ are equivalent if and only if there is an isometry $h\colon\Gamma\to\Gamma'$ such that $h\circ \tau$ is homotopic to $\tau'$.
\end{itemize}
A pair $(\tau, \Gamma)$ is called a \emph{marked metric graph}.
\end{definition}

\begin{definition}
The \emph{projectivized (Culler-Vogtmann) outer space} $\CV$ is the quotient of $\cv$ by the equivalence relation induced by scaling the graphs.
\end{definition}

Equivalently, one may think of points in $\CV$ as marked metric graphs in which the sum of the lengths of all edges is equal to one.

For each $x=(\tau,\Gamma)\in\cv$ the homotopy equivalence $\tau$ induces the isomorphism $\tilde\tau$ from $\pi_1(R_n)$, identified with $F_n$, to $\pi_1(\Gamma)$. Conversely, given an isomorphism $\eta\colon F\to\pi_1(\Gamma)$, it defines a homotopy equivalence $\tau$ from $R_n$ to $\Gamma$, and thus defines a point $x=(\tau,\Gamma)\in\cv$ such that $\tilde\tau=\eta$ (see, for example,~\cite{kapovich_n:subset_currents12}). Note that different automorphisms from $F_n$ to $\pi_1(\Gamma)$ may induce homotopic markings, and thus represent the same point of $\cv$. For example, if $\Gamma=R_n$ then inner automorphisms of $F_n$ induce markings homotopic to the trivial one.

The group $\Out$ acts on $\cv$ on right by changing the marking: given $\phi\in\Out$, let $f\colon R_n\to R_n$ be a representative for $\phi$; then $(\tau,\Gamma)\phi=(\tau\circ f,\Gamma)$. On the level of isomorphisms, this action simply corresponds to the right multiplication: $(\eta,\Gamma)\phi=(\eta\phi,\Gamma)$.

Alternatively and equivalently, we can think of points of the unprojectivized outer-space as free minimal actions of $F_n$ on simplicial $\R$-trees. Each element $(\tau,\Gamma)\in\cv$ induces such an action of $F_n$ on the universal cover $\tilde\Gamma$ of $\Gamma$ via the identification $\hat\tau$ between $F_n$ and $\pi_1(\Gamma)$ and the action of the fundamental group $\pi_1(\Gamma)$ on $\Gamma$ by deck transformations. Conversely, each free minimal action of $\F$ on the simplicial $\R$-tree induces a marking of the quotient space of this action, which is a graph with fundamental group isomorphic to $\F$. In this paper it will be convenient to use both these viewpoints. In particular, we will describe points in $\cv$ sometimes as graphs with markings, and sometimes as $\R$-trees with actions of $F_n$.

Projectivized outer space $\CV$ can be endowed with a non-symmetric Lipschitz metric. Let $(\tau,\Gamma)$ and $(\tau',\Gamma')$ be two points in $\CV$. We will call a morphism $\phi\colon\Gamma\to\Gamma'$ a \emph{difference of markings} if $\tau'$ is homotopic to $\phi\tau$. The \emph{Lipschitz distance} between $(\tau,\Gamma)$ and $(\tau',\Gamma')$ is the $\log$ of the minimal Lipschitz constant over all differences of markings from $\Gamma$ to $\Gamma'$. For more information on this notion, see~\cite{francaviglia_m:mertic_properties_of_outer_space11}.

There is a deep connection between $\CV$ and $\SC(M_n)$ described by Hatcher in the Appendix of~\cite{hatcher:homological_stability95}. Namely, if one denotes by $\SC(M_n)_\infty$ the subcomplex of $\SC(M_n)$ consisting of non-minimal sphere systems, then $\SC(M_n)-\SC(M_n)_\infty$ is homeomorphic to $\CV$. The rough idea is the following: given a minimal sphere system $[\hat S]$ in $\SC(M_n)-\SC(M_n)_\infty$ and a fixed rose $R_n$ embedded into $M_n$, one constructs a marked metric graph $(\tau_{[\hat S]},\Gamma_{\hat S})\in \CV$ as a dual graph to $[\hat S]$ in $M_n$, where the homotopy equivalence $\tau_{[\hat S]}$ from $R_n$ to $\Gamma_{[\hat S]}$ is defined as a composition of embedding of $R_n\to M_n$ and the collapse map $M_n\to\Gamma_{\hat S}$. Conversely, given a point $(\tau,\Gamma)$ in $\CV$ one constructs a 3-manifold $M_{\Gamma}$ diffeomorphic to $M_n$ by thickening $\Gamma$. The marking $\tau$ induces a diffeomorphism from $M_n$ to $M_{\Gamma}$ and the simple sphere system $[\hat S]$ in $M_n$ is defined as the preimage of the simple sphere system in $M_{\Gamma}$ corresponding to midpoints of edges in $\Gamma$. For more details we refer the reader to~\cite{hatcher:homological_stability95}.

This homeomorphism between $\SC(M_n)-\SC(M_n)_\infty$ and $\CV$ induces a continuous onto map from $\cv$ to $\SC(M_n)$.

\subsection{Folding Paths}

We now recall the definitions related to folding paths along with some standard facts about them.  These are proven for instance in~\cite{francaviglia_m:mertic_properties_of_outer_space11}.

\begin{definition}
Let $T$ and $T'$ represent two $\R$-trees in unprojectivized outer space $\cv$ and let $\phi \co T \to T'$ be a morphism inducing a train track structure on $T$.  For every $t \geq 0$ let $\sim_t$ denote an equivalence relation on points of $T$, where $x \sim_t y$ iff $\phi(x) = \phi(y)$ and $d_T(x,y) \leq t$.  Let $T_t$ denote the quotient of $T$ by the equivalence relation $\sim_t$.  Then $T_t$ is a tree, $\phi$ factors through $T_t$, and $T_t$ carries an induced free minimal action of $\F$ and so is a point in outer space.  In this case we say that the tree $T_t$ is \emph{obtained from $T$ by folding (all) illegal turns at unit speed for time $t$ with respect to $\phi$}.
\end{definition}

\begin{proposition}\label{prop:foldingpath}
Let $T, T' \in \cv$.  For any morphism $\phi \co T \to T'$ such that $\phi$ induces a train track structure on $T$, there exists a unique continuous path $(T_t)$, $t \in [\alpha,\omega]$ with morphisms $\phi_{st} \co T_s \to T_t$ for $s \leq t$ such that:
\begin{enumerate}
\item $T_\alpha = T$ and $T_\omega = T'$,
\item $\phi_{\alpha\omega} = \phi$,
\item $\phi_{tt} = Id$ for all $t$,
\item $\phi_{st} = \phi_{su}\phi_{ut}$ for $s \leq u \leq t$,
\item each $\phi_{st}$ isometrically embeds edges and induces a train track structure on $T_s$,
\item for $s < t, t'$ the illegal turns of $T_s$ with respect to $\phi_{st}$ and with respect to $\phi_{st'}$ coincide, so $T_s$ has a well-defined train track structure independent of $t$, and
\item for every $s$ there exists $\epsilon > 0$ such that $T_{s+\epsilon}$ is obtained from $T_s$ by folding illegal turns at unit speed.
\end{enumerate}
\end{proposition}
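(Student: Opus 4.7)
The plan is to construct $(T_t)$ explicitly as the quotient $T_t := T/{\sim_t}$, where $\sim_t$ is the $F_n$-equivariant equivalence relation generated by the conditions of the definition preceding the proposition, and then verify properties (1)--(7) and uniqueness directly from this construction. Since $\phi$ is $F_n$-equivariant and the path metric on $T$ is $F_n$-invariant, $\sim_t$ is automatically equivariant, so $T_t$ carries an inherited action of $F_n$. The standard argument, as in Francaviglia--Martino \cite{francaviglia_m:mertic_properties_of_outer_space11}, shows that $T_t$ is again an $\mathbb{R}$-tree, with the induced action free and minimal for every $t < \omega$, and hence represents a point of $\cv$.

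For $s \leq t$ one has an inclusion $\sim_s \,\subseteq\, \sim_t$, which yields a canonical factorization $T \to T_s \to T_t$ and thereby defines the morphism $\phi_{st}$. Properties (3) and (4) are then immediate from the universal property of quotients. For (1), set $\alpha = 0$ so that $\sim_0$ is trivial and $T_0 = T$, and take $\omega$ to be the smallest value at which $\sim_\omega$ coincides with the full fiber relation of $\phi$; then $T_\omega \cong T'$, and (2) follows by identifying $\phi_{0\omega}$ with $\phi$. For (5), note that away from the finitely many orbits of folded initial segments at illegal turns, $\phi_{st}$ is an isometry on edges; the induced train-track structure on $T_s$ is the one declaring two directions at a vertex equivalent if $D(\phi_{st})$ identifies them. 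Property (6) then follows because identifications under $\sim_t$ are monotonic in $t$: a turn in $T_s$ is illegal with respect to $\phi_{st}$ iff its two directions have a common initial segment identified at some later time, which is a property of $T_s$ alone. Property (7) is essentially the definition of $\sim_{s+\epsilon}$: at each illegal turn of $T_s$, initial segments of length exactly $\epsilon$ along the gated directions get identified, which is precisely folding all illegal turns at unit speed for time $\epsilon$.

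For uniqueness, any family $(T_t', \phi_{st}')$ satisfying (1)--(7) is rigidly determined by (7), which dictates exactly which pairs of points are identified at each time; combined with (1), (2), (4), and the universal property, this forces canonical isometries $T_t' \cong T_t$ intertwining the morphisms. Continuity of $t \mapsto T_t$ in the Lipschitz metric on $\cv$ comes from the uniform local bound on the infinitesimal stretch factor of $\phi_{s,s+\epsilon}$, which depends only on the local combinatorics of gates. The main obstacle I anticipate is verifying that $T_t$ remains a free, minimal $F_n$-tree at each intermediate $t$---in particular that the progressive identifications never collapse an axis to a loop nor create a global fixed point---which requires careful bookkeeping of which directions lie in which gate throughout the evolution, using that $\phi$ itself is an edge-isometric morphism without collapses and that the train track structure on $T$ has at least two gates at every vertex.
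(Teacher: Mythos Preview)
The paper does not actually prove this proposition. It is stated as a standard fact, with the surrounding text explicitly deferring to \cite{francaviglia_m:mertic_properties_of_outer_space11}: ``We now recall the definitions related to folding paths along with some standard facts about them. These are proven for instance in~\cite{francaviglia_m:mertic_properties_of_outer_space11}.'' Immediately after the statement the paper simply names the resulting path and moves on.

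Your proposal is a reasonable sketch of the standard argument, and in fact the construction you give---taking $T_t = T/{\sim_t}$ for the equivalence relation $\sim_t$ generated by identifying points with the same $\phi$-image at distance at most $t$---is exactly the definition the paper records just before the proposition. So you are not deviating from the paper's approach; you are filling in what the paper delegates to the literature. The verification of (1)--(7) you outline is the expected one, and citing Francaviglia--Martino for the fact that the quotient remains a free minimal $F_n$-tree is appropriate, since that is where the details live. One small correction: in (4) the composition should read $\phi_{st} = \phi_{ut}\circ\phi_{su}$ (maps compose in the target direction), matching the factorization $T_s \to T_u \to T_t$; your phrasing ``$\phi_{st} = \phi_{su}\phi_{ut}$'' mirrors the paper's slightly informal notation but your explanation via the universal property is the correct content.
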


The path $(T_t)$ in $\cv$ is called the \emph{folding path} associated to $\phi$.  Note that we do not rescale the quotient graphs $F_n\backslash T_t$ to all have metric volume $1$, instead allowing their metric volume to monotonically and continuously decrease along a folding path. With a slight abuse of notation we will call the projection of a folding path in $\cv$ to $\CV$ also by folding path.

In order to relate folding paths to geodesics in $\CV$ with respect to the Lipschitz metric we need to introduce a notion of an \emph{optimal map}.

\begin{definition}
Let $\Gamma, \Gamma' \in \CV$, with markings $\tau$ and $\tau'$, respectively.  A map $\phi \co \Gamma \to \Gamma'$ is \emph{optimal} if:
\begin{enumerate}
\item $\phi$ is a difference of markings;
\item for each edge of $\Gamma$ the restriction of $\phi$ is either constant or an immersion with constant speed (called the \emph{slope} of $\phi$ on the given edge);
\item the set of edges on which $\phi$ has maximal slope has no vertices of degree 1 (this set of edges is the \emph{tension subgraph} of $\Gamma$); and
\item $\phi$ induces a train track structure on the tension subgraph.
\end{enumerate}
\end{definition}

\begin{proposition}\label{prop:optimalmap}
Let $\Gamma, \Gamma' \in \CV$.  There exists an optimal map from $\Gamma$ to $\Gamma'$.
\end{proposition}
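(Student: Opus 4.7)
The plan is to obtain an optimal map by minimizing the Lipschitz constant over a natural class of candidates and then verifying that any minimizer satisfies conditions (3) and (4). First I would restrict attention to ``straight'' differences of markings: maps $\phi \co \Gamma \to \Gamma'$ in the correct homotopy class that, on each (natural) edge of $\Gamma$, are either constant or an immersion at constant speed, so that condition (2) is built in. Such a $\phi$ is determined by the images of the finitely many vertices of $\Gamma$ in $\Gamma'$ together with a choice of reduced edge-path in $\Gamma'$ for each edge of $\Gamma$. Working equivariantly on universal covers, the Lipschitz constant is a continuous function of the vertex placements that is proper modulo the deck transformation action, so by a standard compactness argument the infimum of the Lipschitz constant over all straight differences of markings is attained by some $\phi_0$.

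Next I would verify condition (3) for $\phi_0$ by a perturbation argument. Suppose the tension subgraph $\Delta$ had a vertex $v$ of degree $1$ in $\Delta$, incident to the unique edge $e \in \Delta$ and possibly to other edges $e_1,\dots,e_k \in \Gamma - \Delta$. Moving $\phi_0(v)$ a small distance in the direction of $\phi_0(e)$ at $\phi_0(v)$ shortens the image path of $e$ and hence strictly decreases its slope, while changing the slopes on each $e_i$ continuously. Since the $e_i$ had slope strictly below the maximum, a small enough perturbation produces a straight difference of markings of strictly smaller Lipschitz constant, contradicting minimality.

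Condition (4) would be verified by a very similar argument. Suppose at some vertex $v$ of $\Delta$ the partition of directions of $\Delta$ at $v$ induced by $\phi_0$ has only one gate, so every edge of $\Delta$ incident to $v$ leaves in a direction all mapping to a single outgoing direction $d'$ at $\phi_0(v)$. Pulling $\phi_0(v)$ back a small distance along $d'$ then simultaneously shortens the $\phi_0$-image of every edge of $\Delta$ at $v$, strictly decreasing all of their slopes, while perturbing the slopes on edges $e_i \in \Gamma - \Delta$ at $v$ only slightly; again this contradicts minimality of the Lipschitz constant.

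The main obstacle is making the perturbation analysis in the last two steps quantitatively precise: moving $\phi_0(v)$ affects the slopes of all edges of $\Gamma$ incident to $v$ (including those not in $\Delta$), and one must choose the perturbation size small enough relative to the strict gap between the maximum slope and the slopes on edges outside $\Delta$ to ensure the Lipschitz constant genuinely decreases. A secondary technicality is that $\phi_0(v)$ may itself be a vertex of $\Gamma'$, so one must work with one-sided perturbations along a consistent choice of direction, or pass to a sufficient subdivision of $\Gamma'$, so that ``slope'' is well-defined throughout the deformation.
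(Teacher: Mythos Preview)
The paper does not actually give a proof of this proposition; it is stated as a known fact (the surrounding discussion points to \cite{francaviglia_m:mertic_properties_of_outer_space11}). Your outline is essentially the standard Francaviglia--Martino argument, and the overall strategy is the right one: minimize the Lipschitz constant over straight differences of markings, then use local perturbations at vertices to arrange conditions (3) and (4).

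There is, however, a genuine gap in your perturbation step. You claim that if the tension subgraph $\Delta$ has a degree-$1$ vertex $v$ with incident edge $e\in\Delta$, then moving $\phi_0(v)$ slightly along the image of $e$ produces a straight map of \emph{strictly smaller} Lipschitz constant. That is false in general: any edge of $\Delta$ not incident to $v$ still has the old maximal slope, so the Lipschitz constant of the perturbed map is the same, not smaller. The identical objection applies to your argument for (4). What the perturbation actually buys you is a new minimizer whose tension subgraph is strictly contained in the old one (you removed $e$ from $\Delta$ without adding any of the $e_i$, by choosing the perturbation small relative to the slope gap).

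The standard repair is to add a secondary minimization: among all straight differences of markings realizing the minimal Lipschitz constant, choose one whose tension subgraph has the fewest edges (or smallest total length). Your perturbation then contradicts this secondary minimality rather than the primary one. Equivalently, you can phrase it as an iteration: repeatedly perturb to strictly shrink $\Delta$ while keeping the Lipschitz constant fixed; since $\Delta$ has only finitely many edges the process terminates, and the terminal map satisfies (3). Then run the same scheme for (4). With that adjustment, your proof goes through.
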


Note that optimal maps are not unique (unlike Teichm\"uller maps in Teichm\"uller space).

\begin{proposition}[\cite{francaviglia_m:mertic_properties_of_outer_space11}]
For each $\Gamma,\Gamma'\in\CV$ there is $\Gamma''\in\CV$ in the closure of the same simplex as $\Gamma$ and an optimal map $\phi\colon\Gamma''\to\Gamma'$, such that the following path is a geodesic from $\Gamma$ to $\Gamma'$:  first follow the line from $\Gamma$ to $\Gamma''$ in their simplex, and then follow the folding path from $\Gamma''$ to $\Gamma'$ associated to the lift $\tilde\phi$ of $\phi$ to universal covers.

Moreover, if there is an optimal map $\phi$ from $\Gamma$ to $\Gamma'$ with tension subgraph equal to $\Gamma$, then $\Gamma'' = \Gamma$ and the folding path from $\Gamma''$ to $\Gamma'$ is a geodesic.
\end{proposition}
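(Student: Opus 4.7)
The plan is to leverage the characterization that, for any optimal difference of markings $\psi\colon\Gamma\to\Gamma'$ with Lipschitz constant $L$ (the common slope on the tension subgraph), one has $d_{\CV}(\Gamma,\Gamma') = \log L$. The upper bound is immediate from the definition of the Lipschitz distance. The matching lower bound is the standard Francaviglia-Martino stretching lemma: any admissible difference of markings must stretch some legal immersed loop passing through the tension subgraph by at least the factor $L$. I would prove this characterization first, since it reduces the proposition to producing a path from $\Gamma$ to $\Gamma'$ of length exactly $\log L$.

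For the moreover clause, assume the tension subgraph of $\psi$ is all of $\Gamma$, and let $(T_t)_{t\in[\alpha,\omega]}$ be the folding path associated to the lift $\tilde\psi$ as in Proposition~\ref{prop:foldingpath}. Track the quotient volume $V(t) := \mathrm{vol}(F_n\backslash T_t)$ along the path. Because $\phi_{st}$ isometrically embeds edges by part~(5), it is $1$-Lipschitz in $\cv$; rescaling the endpoints to their projective representatives of volume one then makes the induced map on $\CV$-points have Lipschitz constant at most $V(s)/V(t)$. Hence $d_{\CV}(\Gamma_s,\Gamma_t) \leq \log(V(s)/V(t))$. Summing over any partition of $[\alpha,\omega]$ telescopes to an upper bound of $\log(V(\alpha)/V(\omega))$ for the length of the projected folding path. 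The full-tension hypothesis forces $\tilde\psi$ to itself be $1$-Lipschitz in $\cv$, whence $V(\alpha)/V(\omega)$ agrees with the projectivized Lipschitz constant $L$. The upper bound $\log L = d_{\CV}(\Gamma,\Gamma')$ combined with the triangle inequality forces equality along every subarc, so the projected folding path is a geodesic.

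For the general case, define $\Gamma''$ in the closure of the simplex of $\Gamma$ by rescaling each edge $e$ to new length $m_e/L''$, where $m_e$ is the length of the $\psi$-image of $e$ and $L'' := \sum_e m_e$; edges on which $\psi$ is constant ($m_e = 0$) collapse to points. Then $\psi$ descends to an optimal map $\psi''\colon\Gamma''\to\Gamma'$ of constant slope $L''$ on every surviving edge, i.e., with full tension subgraph, so the moreover case yields a folding-path geodesic of length $\log L''$ from $\Gamma''$ to $\Gamma'$. For the straight-line segment from $\Gamma$ to $\Gamma''$ in their simplex, the identity combinatorial map from $\Gamma(s)$ to $\Gamma(t)$ has maximal slope realized on tension edges (since those grow fastest among all edges under the linear interpolation), and a direct computation shows these slopes telescope so that the total length of the segment is at most $\log(L/L'') = \log L - \log L''$. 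Concatenating gives a path from $\Gamma$ to $\Gamma'$ of total length at most $\log L = d_{\CV}(\Gamma,\Gamma')$, which is therefore a geodesic, forcing equality along both segments.

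The step I expect to be the main obstacle is the volume-decay identity $V(\alpha)/V(\omega) = L$ along the folding path: it requires carefully invoking the train-track property from part~(6) of Proposition~\ref{prop:foldingpath} to confirm that under the full-tension hypothesis $\tilde\psi$ remains isometric on edges throughout folding, so that folding at unit speed reduces the quotient volume at precisely the rate that saturates the Lipschitz bound. Without this identity, the subadditive estimate on the folding path would be looser and the geodesic conclusion would fail.
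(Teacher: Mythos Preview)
The paper does not prove this proposition; it is quoted directly from Francaviglia--Martino, so there is no in-paper argument to compare against.

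Your treatment of the ``moreover'' clause is essentially correct, and your straight-line length computation in the general case is also right (the maximum of $\dot\ell_e/\ell_e$ along the linear interpolation is attained on tension edges throughout and integrates to $\log(L/L'')$). Two remarks, though. First, the obstacle you flag is not one: once you rescale $T_\alpha$ by the factor $L$ so that $\tilde\psi$ becomes a morphism---which full tension permits, since the slope is the same on every edge---part~(5) of Proposition~\ref{prop:foldingpath} already gives that every $\phi_{st}$ embeds edges isometrically, and the volume identity is then immediate from $V(\alpha)=L\cdot\mathrm{vol}(\Gamma)=L$ and $V(\omega)=\mathrm{vol}(\Gamma')=1$. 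Your sentence ``the full-tension hypothesis forces $\tilde\psi$ to itself be $1$-Lipschitz'' is misleading as written: $\tilde\psi$ is $L$-Lipschitz, and it is the rescaling that produces a $1$-Lipschitz morphism.

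The genuine gap is in the general case. Your $\psi''\colon\Gamma''\to\Gamma'$ has constant slope $L''$ on every surviving edge, so its tension subgraph is full, but you have not checked that $\psi''$ is \emph{optimal}: optimality also requires at least two gates at every vertex of the tension subgraph, which is now every vertex of $\Gamma''$. The hypothesis on $\psi$ only guarantees two gates at vertices of the \emph{original} tension subgraph; a vertex of $\Gamma$ lying entirely outside that subgraph may have a single gate under $\psi$, and nothing in your rescaling repairs this. Without the train-track structure the lift $\tilde\psi''$ is not a train-track map, the folding path from $\Gamma''$ is undefined, and the moreover clause cannot be invoked. Francaviglia--Martino handle this by a more delicate perturbation within the simplex that enlarges the tension subgraph while maintaining the two-gate condition at each step; an alternative route is to start from a map that is already foldable (at least three gates everywhere), as in Handel--Mosher or the terse maps of Section~\ref{sec:terse}, but then one must separately argue that such a map realizes the Lipschitz distance.
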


\subsection{Fold Paths}

We now turn to the notion of a fold path as defined by Handel and Mosher \cite{handel_m:hyperbolicity_of_fs12}.  We begin by recalling the relevant definitions.  A more extended exposition of the definitions and facts concerning fold paths can be found in \cite{handel_m:hyperbolicity_of_fs12}.  Given two $\R$-trees $S, T \in \cv$ such that $S$ has no vertices of degree $2$, a map $f\co S \to T$ is \emph{foldable} if $f$ is injective on each edge of $S$ and $f$ has at least 3 gates at each vertex of $S$.  A \emph{maximal fold factor} of a map $f \co S \to T$ is a map $h \co S \to U$ such that $f$ factors through $h$ and $h$ is the identity on $S$ except for the following property:  $h$ equivariantly folds together exactly two $F_n$-orbits of oriented initial segments of edges such that the initial segments have maximal length. If a map $f$ is foldable then after performing the maximal fold factor the induced map from the quotient space to $T$ is also foldable. Therefore, we can consider sequences of maximal fold factors, preserving the fact that we have a foldable map at each step. A \emph{fold sequence} is a sequence of trees $S_0, \dots, S_K \in \cv$ together with foldable maps $f^i_j \co S_i \to S_j$ such that $f^i_{i+1}$ is a maximal fold factor of $f^i_K$ for all $i = 1, \dots, K-1$.  A \emph{fold path} is a sequence of simplices in $\SC(M_n)$ which is the projection of a fold sequence (defined in the end of Subsection~\ref{ssec:outer_space}).

Note fold paths are defined in \cite{handel_m:hyperbolicity_of_fs12} for arbitrary minimal simplicial actions of $F_n$ on trees with trivial edge stabilizers, not just points from the interior of outer space, though we do not need that greater generality here (similar to the point of view exploited in~\cite{kapovich_r:hyperbolicity_FF12}).

Now we recall some facts about fold paths.  These facts follow from (the proofs of) results in Section 2 of \cite{handel_m:hyperbolicity_of_fs12}.  For two consecutive simplices in a fold path, the simplices are distinct but share a common face (which can be one of the two simplices).  For any two trees $S, T \in \cv$ with $S$ having no vertices of degree 2, there exists trees $S', S'' \in \cv$ such that both have no vertices of degree 2, $S$ and $S''$ both differ from $S'$ by equivariantly collapsing some subset of $S'$, and there exists a foldable map $S'' \to T$.  Note that when $S$ is from outer space (and not its compactification), we may take $S = S'$, and so the projection of $S''$ to $\SC(M_n)$ is a face of the projection of $S$ to $\SC(M_n)$.  Moreover, when $S'$ has $F_n$-quotient a rose and $T$ is from outer space (and not its compactification), $S'' = S'$.  For any foldable map $f\co S \to T$ there exists a fold sequence $S=S_0, \dots, S_K=T \in \cv$ such that $f = f^0_K\co S \to T$.

For our purposes, the most important property of fold paths, shown by Handel and Mosher, is that they are quasigeodesics in the (hyperbolic) sphere complex.

\begin{proposition}\cite{handel_m:hyperbolicity_of_fs12}
The set of all fold paths forms an almost transitive path family of uniform unparametrized quasigeodesics.
\end{proposition}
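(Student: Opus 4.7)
The plan is to establish the two required properties separately, using the tools on fold paths summarized just before the proposition.

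\textbf{Almost transitivity.} Given arbitrary vertices $u,v$ of $\SC(M_n)$, I would first lift them via the Hatcher correspondence of Subsection \ref{ssec:outer_space} to trees $S,T\in\cv$ whose projections to $\SC(M_n)$ recover $u$ and $v$. By the first recalled fact about fold paths, there exist trees $S',S''\in\cv$ with no degree-$2$ vertices such that $S$ and $S''$ are each obtained from $S'$ by equivariantly collapsing a subtree, and such that a foldable map $S''\to T$ exists. Since both $S$ and $S''$ are collapses of $S'$, the projections of $S$ and $S''$ to $\SC(M_n)$ are faces of the projection of $S'$, and in particular the projection of $S''$ lies within $\SC$-distance $1$ of $u$. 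The last recalled fact then promotes the foldable map $S''\to T$ to a fold sequence $S''=S_0,\dots,S_K=T$, whose projection is a fold path in $\SC(M_n)$ with first simplex within distance $1$ of $u$ and last simplex equal to $v$. This gives almost transitivity with constant $D=1$.

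\textbf{Uniform unparametrized quasigeodesic property, easy direction.} Let $\sigma_0,\dots,\sigma_K$ be a fold path. Because consecutive simplices share a common face, any two consecutive vertex choices in $\sigma_i$ and $\sigma_{i+1}$ are at $\SC$-distance at most $1$, giving
$$d_{\SC}(\sigma_i,\sigma_j)\le |i-j|$$
and hence the upper bound in the quasigeodesic inequality with $K=1,C=1$ for any reparametrization $\rho$.

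\textbf{Uniform unparametrized quasigeodesic property, hard direction, and the plan.} Following Handel and Mosher, I would construct the reparametrization $\rho$ by collapsing maximal sub-runs of the fold path whose simplices all project into an $L$-neighborhood of a single vertex of $\SC(M_n)$, for some uniform constant $L=L(n)$. The upper bound then automatically carries over to $\rho$, and the nontrivial content is the lower bound: a linear-in-$|\rho(i)-\rho(j)|$ lower bound on $d_{\SC}(\sigma_{\rho(i)},\sigma_{\rho(j)})$. To prove it, I would isolate a local ``progress'' statement of the form: in any block of more than $L$ consecutive maximal fold factors, the dual sphere system must change in a way that is not captured by adjacency in $\SC(M_n)$ to any fixed sphere, and thus the projection must leave any fixed $\SC$-ball of bounded radius. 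Given such a statement, concatenating the local progress across disjoint blocks yields the desired linear lower bound, with constants depending only on $n$, giving uniformity across the whole family.

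The main obstacle is exactly this local progress statement: folds can fail to change the projection to $\SC(M_n)$ for long stretches (for instance when many folds only refine a single edge of the quotient graph before producing a new sphere), so one must combinatorially extract the moment at which the dual sphere system genuinely changes up to isotopy. Handel and Mosher achieve this through a careful analysis of foldable maps built on the Whitehead-graph description of train track structures, together with bounds on the combinatorial complexity of maximal fold factors between trees of bounded quotient volume, all of which depend only on $n$. Piecing these estimates together to produce uniform constants $K,C$ for the entire family is the delicate step that upgrades an individual quasigeodesic estimate to uniformity, and is the content cited from~\cite{handel_m:hyperbolicity_of_fs12}.
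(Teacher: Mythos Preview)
The paper does not prove this proposition at all: it is stated as a citation of a result of Handel and Mosher, with no argument given. So there is no ``paper's own proof'' to compare against, and your proposal is not competing with anything in the present paper.

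That said, your write-up is a fair summary of why the statement is plausible, but it is not a proof. The almost transitivity argument is fine and matches exactly the facts the paper recalls just before the proposition (in fact the paper later reuses precisely this argument in the proof of Lemma~\ref{lem:terse_paths_project_to_geodesics}). The upper bound is also fine. The substantive content, however, is entirely in the ``hard direction'', and there you do not give a proof: you describe a reparametrization scheme and a hoped-for ``local progress'' statement, and then explicitly defer to \cite{handel_m:hyperbolicity_of_fs12} for the actual estimates. That is not a gap in your reasoning so much as an honest acknowledgment that the real work lies elsewhere --- which is exactly the position the paper takes by citing the result rather than proving it.

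One small inaccuracy worth flagging: Handel and Mosher's argument for the quasigeodesic lower bound does not proceed via a ``local progress after $L$ folds'' statement of the kind you sketch. Their proof of hyperbolicity and of the quasigeodesic property for fold paths goes through a Masur--Minsky-style coarse projection to fold paths together with contraction estimates for free splitting units, not through bounding how many maximal folds can occur without moving in $\SC$. So while your outline is a reasonable heuristic, it is not the route actually taken in the cited reference, and carrying it out directly would require new arguments.
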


\subsection{Terse Paths}\label{sec:terse}

Fold paths come from discrete sequences of points in outer space.  However, our proof of the Bounded Geodesic Image theorem requires continuous paths in outer space that project to quasigeodesics.  We find these by looking at a particular kind of hybrid between a fold path and a folding path, which we call a \emph{terse path}.  Conceptually, a terse map is an optimal map which has full tension subgraph and is also a foldable map, so that the projection of the associated folding path to the sphere complex parallels a fold path.  We choose the name terse because a foldable map can be constructed by eliminating all unnecessary and potentially time-wasting edges in the relevant graphs, an optimal map wastes no time on backtracking, and having full tension subgraph means all edges are being folded as fast as possible.

\begin{definition}
Let $T,T'$ be $\R$-trees in $\cv$ and let $\Gamma,\Gamma'\in\CV$ be the projections of $T,T'$. A \emph{terse map} from $T$ to $T'$ is a foldable map $\tilde\phi\co T\to T'$ induced by the optimal map $\phi$ from $\Gamma$ to $\Gamma'$, whose tension graph is all of $\Gamma$. A \emph{terse path} from $T$ to $T'$ is a folding path associated to a terse map from $T$ to $T'$.
\end{definition}

\begin{lemma}\label{lem:terse_paths_project_to_geodesics}
The set of projections of terse paths forms an almost transitive path family of uniform unparametrized quasigeodesics in the sphere complex.
\end{lemma}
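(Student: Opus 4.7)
The plan is to reduce this lemma to the analogous statement for fold paths, which was established by Handel and Mosher. The key observation is that a terse map is, by construction, simultaneously (a) a foldable map, so it generates a fold sequence, and (b) an optimal map with full tension subgraph, so its associated folding path identifies edges everywhere at maximal rate. This forces the critical combinatorial events along a terse path to match the maximal fold factors of an associated fold sequence, so the two paths project to the same sequence of simplices in $\SC(M_n)$.

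For almost transitivity, I would first show that terse maps exist between any two points of $\cv$ after modifications of bounded cost in the sphere complex. Given $T,T'\in\cv$ with projections $\Gamma,\Gamma'\in\CV$, Proposition~\ref{prop:optimalmap} yields an optimal map $\phi\co\Gamma\to\Gamma'$. If its tension subgraph is not all of $\Gamma$, one shrinks the non-tense edges toward length zero, producing a point $\Gamma''$ in the closure of the simplex of $\Gamma$ together with an optimal map $\phi''\co\Gamma''\to\Gamma'$ whose tension subgraph is all of $\Gamma''$; this is the standard trick already invoked in the geodesic proposition preceding Section~6.4. The resulting path from $\Gamma$ to $\Gamma''$ stays inside a single simplex of $\CV$, so its projection to $\SC(M_n)$ stays within bounded diameter. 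Foldability of the lift $\tilde\phi''$ requires injectivity on edges (automatic for optimal maps, which are edge immersions) and at least three gates at every vertex; when the latter fails one subdivides and perturbs edge lengths, again at bounded cost in $\SC(M_n)$. Given any two vertices of $\SC(M_n)$, one completes them to reduced sphere systems (which costs at most a uniform constant by the correspondence of Section~\ref{sec:complexes}) and applies this construction to get a terse path whose projection starts and ends within uniform distance of the given vertices.

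For the unparametrized quasigeodesic property, I would analyze how the projection of a terse path $(T_t)$ varies in $t$. The projection to $\SC(M_n)$ is piecewise constant, changing only at critical times at which the simplicial structure of the quotient $F_n\backslash T_t$ changes. At each such critical time a collection of oriented initial segments are identified in the quotient, and the simplex changes by a single face relation. Because $\tilde\phi$ is foldable and has full tension, these combinatorial events are exactly the maximal fold factors of a fold sequence $T=S_0,S_1,\dots,S_K=T'$ whose foldable maps $f^i_K$ factor through $\tilde\phi$. The projection of this fold sequence is by definition a fold path, and it visits the same ordered sequence of simplices (possibly via a common face between consecutive simplices) as the projection of the terse path. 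The Handel--Mosher proposition quoted above guarantees that fold paths form a family of uniform unparametrized quasigeodesics in $\SC(M_n)$, and the same constants, inflated by a uniform additive amount to absorb the bounded modifications above, then bound projections of terse paths.

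The main technical obstacle will be matching a terse folding path to a genuine fold sequence in the sense of \cite{handel_m:hyperbolicity_of_fs12}: one must verify that the critical events of $(T_t)$ correspond precisely to maximal fold factors, not merely to some sequence of folds. The ``full tension'' condition is meant to guarantee that the longest $F_n$-orbits of initial segments are always the ones being folded, and ``foldable'' rules out any spurious backtracking identifications, so everything should fit Handel and Mosher's framework; but the bookkeeping needed to extract the discrete fold sequence cleanly from the continuous folding path will require care. A secondary concern is checking that the modifications needed to build terse maps from arbitrary pairs of points --- shrinking non-tense edges, subdividing to create gates --- can be made with a constant independent of the pair of vertices in $\SC(M_n)$.
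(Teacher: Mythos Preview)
Your overall strategy---reduce to Handel--Mosher's result that fold paths are uniform unparametrized quasigeodesics---is the same as the paper's. The two places where your execution diverges are worth flagging.

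\textbf{Almost transitivity.} You start from an optimal map and then try to force foldability; the paper does the reverse. Handel and Mosher's Lemma~2.3 already produces a \emph{foldable} map between (trees over) any two reduced sphere systems, and the paper then simply rescales the metric on the source so that the map has slope one on every edge, which makes the tension subgraph full without disturbing foldability. Your route requires arranging at least three gates at every vertex starting from a generic optimal map, and your ``subdivide and perturb edge lengths'' suggestion does not obviously accomplish that; subdivision introduces valence-two vertices with at most two gates. Going foldable-first avoids this entirely.

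\textbf{Matching to a fold sequence.} Your claim that the critical events of a terse path ``are exactly the maximal fold factors of a fold sequence'' and that the two paths ``visit the same ordered sequence of simplices'' is too strong. Along a terse path \emph{all} illegal turns fold at unit speed simultaneously, so at a single critical time several orbits of turns may be involved---either several gates splitting at once, or a vertex collision merging gates---whereas a maximal fold factor identifies exactly two orbits of initial segments. The paper does not claim a bijection; instead it records the critical times $t_1<\dots<t_k$, builds auxiliary graphs $\hat\Gamma_i$ by maximally folding the gates active up to time $t_{i+1}$, and shows (i) $\hat\Gamma_i$ and $\Gamma_{t_i}$ project to simplices sharing a face, and (ii) passing from $\hat\Gamma_i$ to $\hat\Gamma_{i+1}$ can be interpolated by boundedly many maximal folds, the bound depending only on $n$. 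So the projection of the terse path and the associated fold path are uniformly close, not identical. Your final paragraph shows you suspect this is the sticking point; the resolution is precisely this bounded-interpolation argument rather than an exact correspondence.
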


Recall that a sphere system is \emph{reduced} if its complement is simply connected.

\begin{proof}
Every vertex of the sphere complex is a face of infinitely many simplices which are represented by reduced sphere systems, so to prove these paths form an almost transitive path family it suffices to find a terse path whose projection connects any two such simplices.

For any sphere system $A$, let $[A]$ denote the simplex in $\SC$ represented by $A$.

For any two sphere systems $A$ and $B$, Handel and Mosher prove \cite[Lemma 2.3]{handel_m:hyperbolicity_of_fs12} that there exists a sphere system $A''$ such that:
\begin{itemize}
\item $[A]$ and $[A'']$ are faces of a common simplex in $\SC$, and
\item there exists a foldable map $T_{A''} \mapsto T_B$, where $T_{A''}$ and $T_B$ are points in unprojectivized outer space $\cv$ whose projections to the sphere complex are $[A'']$ and $[B]$, respectively.
\end{itemize}
Their proof shows that if $A$ is simple (that is, $T_A$ is locally finite) then $A''$ can be taken to represent a face of $[A]$, and that if $A$ and $B$ are reduced sphere systems (that is, $T_A$ and $T_B$ are locally finite with $F_n$-quotient a rose having exactly $n$ edges) then $A''$ can be taken to be $A$.  Thus, given any two reduced sphere systems $A$ and $B$ there exists a foldable map $f: T_A \to T_B$ for trees $T_A$ and $T_B$ projecting to $[A]$ and $[B]$, respectively.  Moreover, this map $f$ may be taken to be optimal.

Let $A$ and $B$ be reduced sphere systems, and let $f: T_A \to T_B$ be as described.  The property of being a foldable map only depends on the train track structure on $T_A$ induced by $f$, and does not depend on the metric of $T_A$.  Thus, given a foldable map, we may rescale tree $T_A$ and the map $f$ until the tension subgraph of $f$ is all of $T_A$, by defining a metric on $T_A$ so that $f$ restricts to an isomorphism on every preimage of every edge of $T_B$.  Note that $f$ is nonconstant on every edge of $T_A$ since $A$ and $B$ are reduced, so $T_A$ still projects to $[A]$ (though even if $f$ were constant on some edges, assigning some edges length zero would not decrease the number of gates at any vertex, preserving foldability of $f$ with $T_A$ projecting to a face of $[A]$).  Thus, between any two reduced sphere systems there exists a projection of a terse path, and so the set of projections of terse paths is an almost transitive path family.

It remains to show that terse paths project to unparametrized quasigeodesics with uniform constants in $\SC(M_n)$.  To show that a given terse path projects to an unparametrized quasigeodesic, we construct a fold path that is uniformly close to it.  Then, as Handel and Mosher have shown that fold paths are uniform unparametrized quasigeodesics \cite{handel_m:hyperbolicity_of_fs12}, projections of terse paths will be too.

We begin constructing this fold path by considering the behavior of illegal turns along a terse folding path $(T_t)_{t\in [0,N]}$.  For some $t \in [0,N]$, let $\Gamma_t := F_n \backslash T_t$ denote the quotient graph and let $\omega$ denote some gate at a vertex $v$ in $\Gamma_t$.  Under folding, there is some nontrivial amount of time such that the vertex $v$ and the directions in $\omega$ at $v$ all evolve continuously and in a well-defined manner.  This continuous evolution only stops when either $v$ evolves to collide with some other vertex or the gate $\omega$ splits into multiple gates (or both).  This time is called the \emph{critical time} for the gate $\omega$.

Along a folding path between points in outer space, the set of all critical times for all possible gates -- that is, the set of times when two vertices collide or when a gate splits -- is finite and hence discrete.  Let $t_1 < \dots < t_k$ denote the critical times listed in order, and set $t_0 := 0$.  For each $i = 0, \dots, k-1$, let $\Omega_i$ denote the set of all gates have critical time $t_{i+1}$. Evolving each gate $\omega$ in $\Omega_i$ backward in time, there exists some minimal time $t' < t_{i+1}$ and some gate $\omega'$ in $\Gamma_{t'}$ such that $\omega'$ evolves continuously to $\omega$.  By minimality, it must be that $t' = t_j$ for some $0 \leq j \leq i$. Thus, we can think of each gate $\omega\in\Omega_i$ as existing in each $\Gamma_t$ for $t_j\leq t\leq t_{i+1}$. Construct the graph $\hat\Gamma_i$ by maximally folding all gates from $\Omega_1,\Omega_2,\ldots,\Omega_{k-1}$ in $\Gamma_{t_0}$. Note, that by definition of $\Omega_{i+1}$ some gates in $\Omega_{i+1}$ might be created only after folding all gates in $\Omega_j$, $j\leq i$. By construction, we get that $\Omega_{i+1}$ is the set of gates in $\hat\Gamma_{i}$ and that $\hat\Gamma_{i+1}$ is obtained from $\hat\Gamma_i$ by maximally folding all gates in $\Omega_{i+1}$ viewed as gates in $\hat\Gamma_i$.  Moreover, one can obtain $\Gamma_{t_i}$ by (non-maximally) folding gates from $\Omega_{i+1},\Omega_{i+2},\ldots,\Omega_{k}$ that are defined in $\hat\Gamma_i$ up to a time $t_i$.

We now claim that the projections of the sequences $\{\Gamma_t\}$ and $\{\hat \Gamma_i\}$ are quasiisometric.  First, by the last remark in the previous paragraph, the graph $\Gamma_{t_i}$ can be obtained from $\hat\Gamma_i$ by folding some gates that never collide with any vertices. This means that $\Gamma_{t_i}$ projects in $\SC(M_n)$ to a simplex whose face is the projection of $\hat\Gamma_i$. In particular, the projections of $\hat\Gamma_i$ and $\Gamma_{t_i}$ are at most distance one apart. On the other hand, by the same reasoning, the distance between projections of $\Gamma_{t_i}$ and $\Gamma_{t}$ is also at most one for $t\in[t_i,t_{i+1})$. Thus the claim follows.

Finally, we claim that the sequence of graphs $\{\hat \Gamma_i\}$ can be interpolated to a fold sequence such that the projections of $\{\hat \Gamma_i\}$ and the fold sequence are quasiisometric.  But this follows immediately from construction, since the number of turns in each $\Omega_i$ is uniformly bounded above by a function of $n$, and from the definition of a fold sequence, since folding each $\Omega_i$ maximally can be realized by folding individual turns within $\Omega_i$ maximally, and by Lemma~2.5 in~\cite{handel_m:hyperbolicity_of_fs12} performing a maximal fold cannot move more than distance than 2. The lemma then follows.
\end{proof}

\subsection{Evolving Sphere Trees along Folding Paths}\label{sec:evolving}

In this paper, we prefer to use folding paths over fold paths because with the continuously varying folding paths it is easier to keep track of how sphere trees change along the path (see in particular the proof of the Bounded Geodesic Image Theorem, which requires the terse folding paths of Section \ref{sec:terse}).  In this section we detail that continuous variation, which we call \emph{evolving} a sphere tree along a folding path.  Note that the evolution of a sphere tree is not the same operation as taking the image of the tree as a set under any associated optimal map.

It is shown in Section~\ref{sec:spheretrees} how, given a simple sphere system $\mathcal A$ in $M_n$ and a sphere $S$ in $M_n$, one can construct a consolidated sphere tree $T_S$ of $S$ in the tree $T$, which is a universal cover of a dual graph $\Gamma$ to the sphere system $\mathcal A$, viewed as a point in $\cv$. Moreover, by Hatcher's correspondence discussed in the end of Subsection~\ref{ssec:outer_space} for any point $T\in\cv$ one can associate a simple sphere system $\mathcal A_T$ in $M_n$ and, hence, the consolidated sphere tree $T_S$ contained in $T$. Thus, given a folding path $(T_t),0\leq t\leq M_n$ in $\cv$, and a sphere $S$ in $M_n$ one can ask how the consolidated sphere trees $(T_t)_S$ evolve inside the trees $T_t$ along the folding path.

Moving along a portion of the folding path where vertices do not collide corresponds simply to rescaling the metric on $T_t$ and has no effect on the sphere trees $(T_t)_S$ that evolve continuously with $T_t$. The problem arises when two vertices $v$ and $w$ in $T_t$ collide at some time between times $t < t'$ when folding from time $t$ to time $t'$, and there is a bud of $(T_t)_S$ between $v$ and $w$, because by definition of sphere trees we do not allow buds to coincide with vertices of $T_t$. To avoid this problem we will simply use the Bud Exchange Move. First, we assume that time $t$ was chosen in such a way that no other vertices collide with either $v$ or $w$ between times $t$ and $t'$ (think of $t$ and $t'$ as chosen to be $\epsilon$ before and after to the collision time, respectively, for some very small $\epsilon$).  One can exchange the set of buds adjacent to one of the vertices that collide to its complement, which creates a new sphere tree $(T_t)_{S'}$ (possibly not consolidated) that corresponds to the sphere $S'$ homotopic to $S$ in $M_n$. After this exchange there will be no buds in $(T_t)_{S'}$ between $v$ and $w$, since $(T_t)_S$ was assumed to be consolidated.  Hence $(T_t)_{S'}$ can evolve continuously while the vertices $v$ and $w$ collide along the folding path to a sphere tree $(T_{t'})_{S'}$. Finally, after the collision we simply reduce $(T_{t'})_{S'}$ to consolidated form by applying Bud Cancellation and Bud Exchange moves as necessary to obtain $(T_{t'})_S$. An example of such evolution is shown in Figure~\ref{fig:sphere_evolution}.

\begin{figure}[h]
\begin{center}
\begin{tabular}{m{10pt}m{150pt}m{15pt}m{150pt}}
&
\fbox{\minibox{\includegraphics[width=140pt]{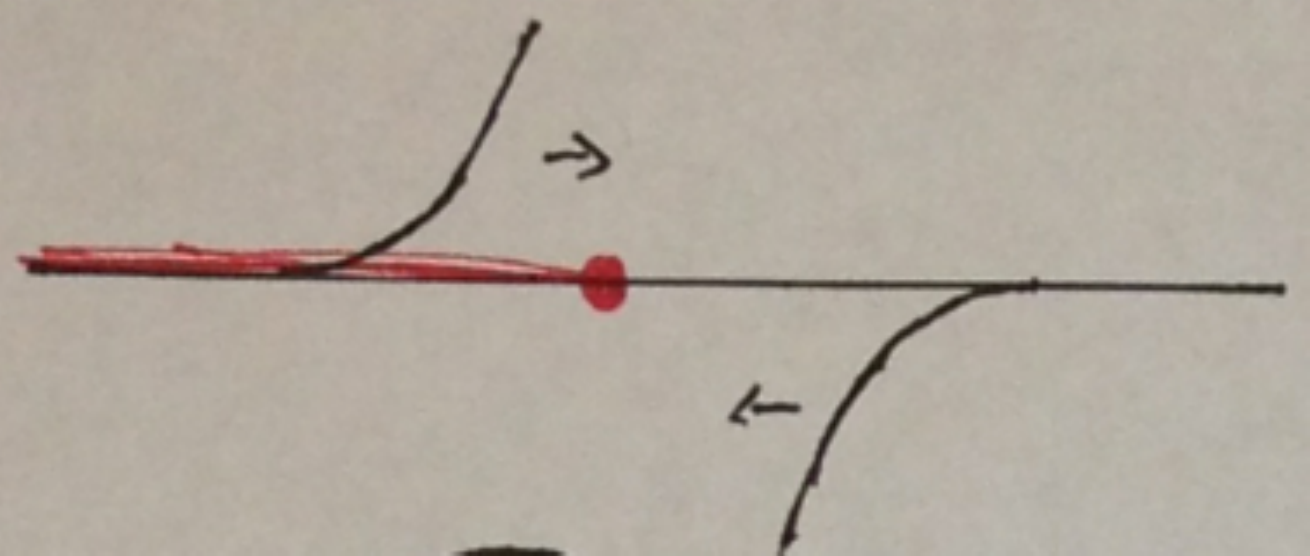}\\\includegraphics[width=140pt]{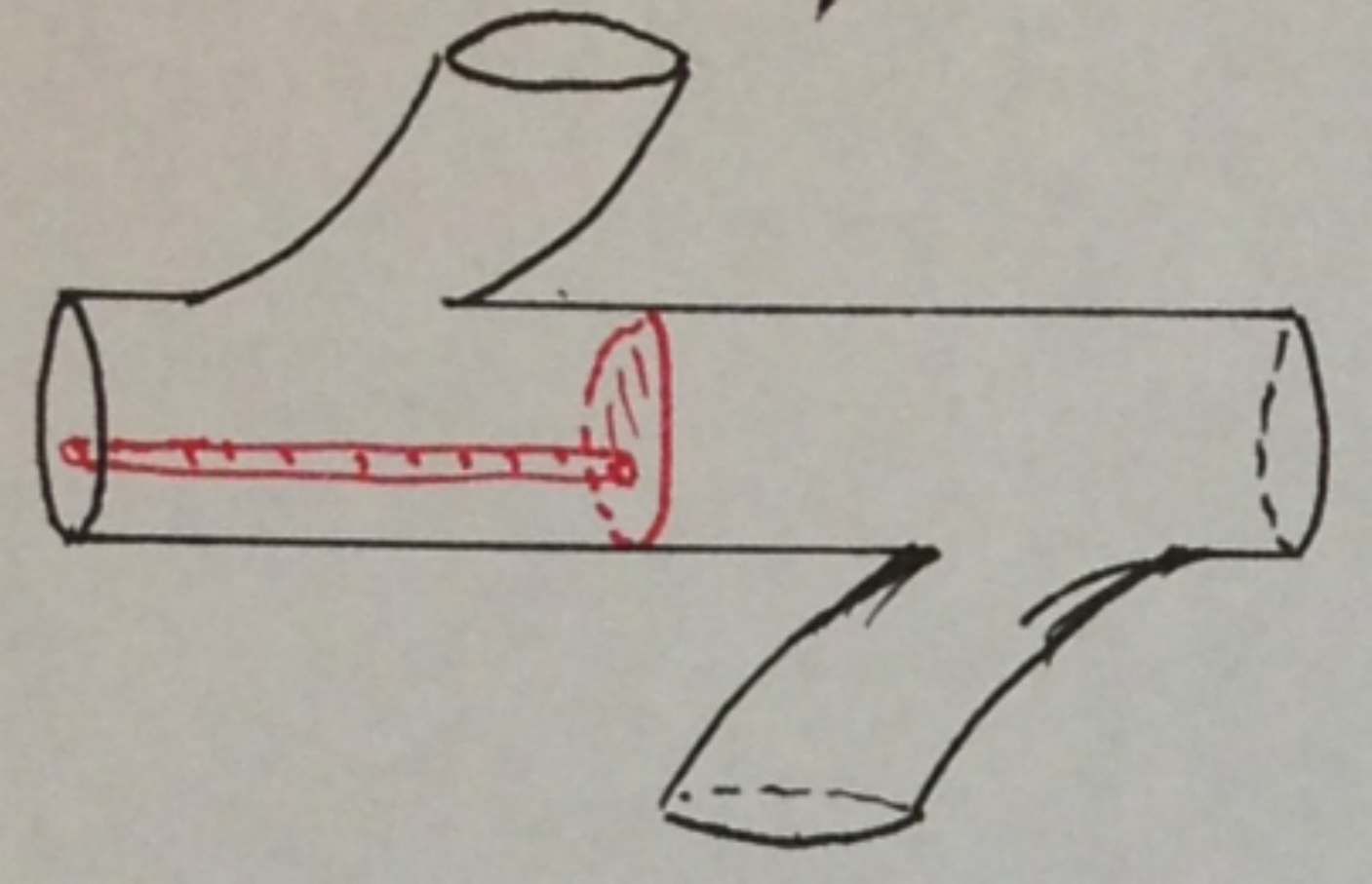}}}
&$\buildrel exch. \over \longrightarrow$&
\fbox{\minibox{\includegraphics[width=140pt]{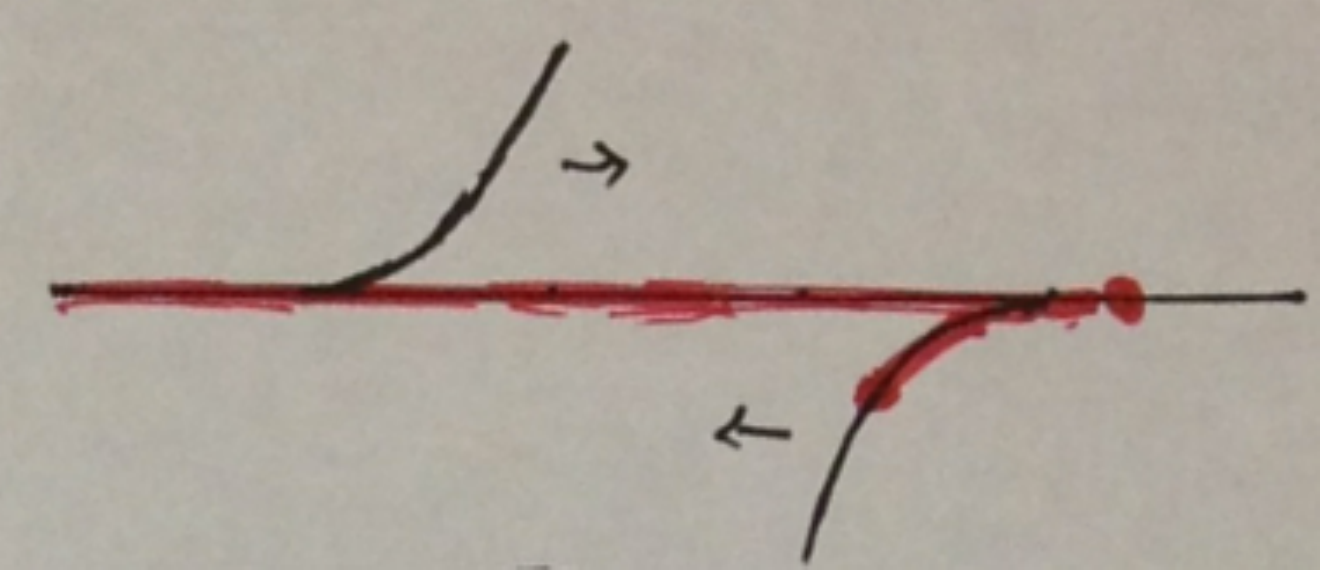}\\\includegraphics[width=140pt]{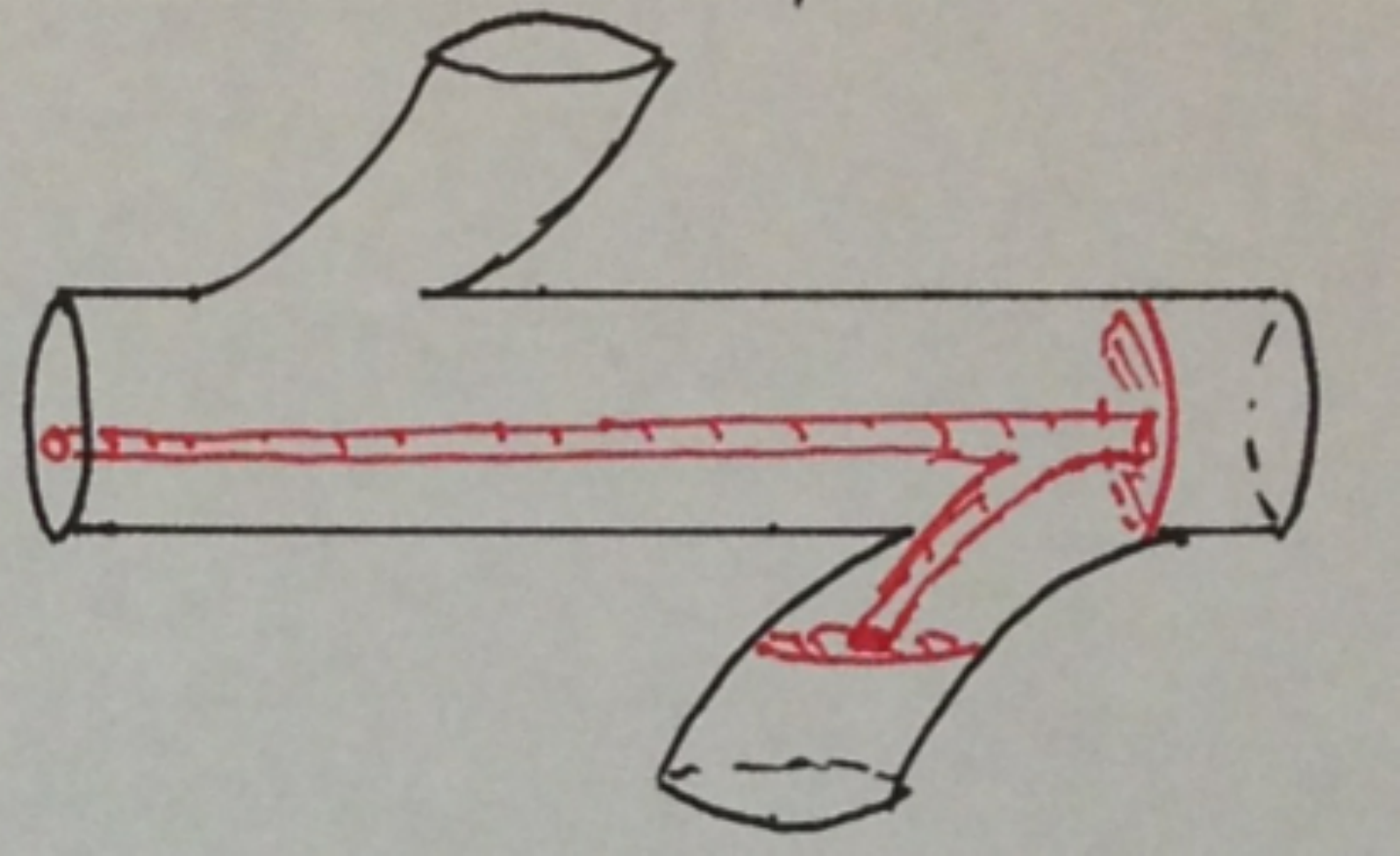}}}
\\\vspace{0.4cm}\\$\buildrel fold \over \longrightarrow$&
\fbox{\minibox{\includegraphics[width=140pt]{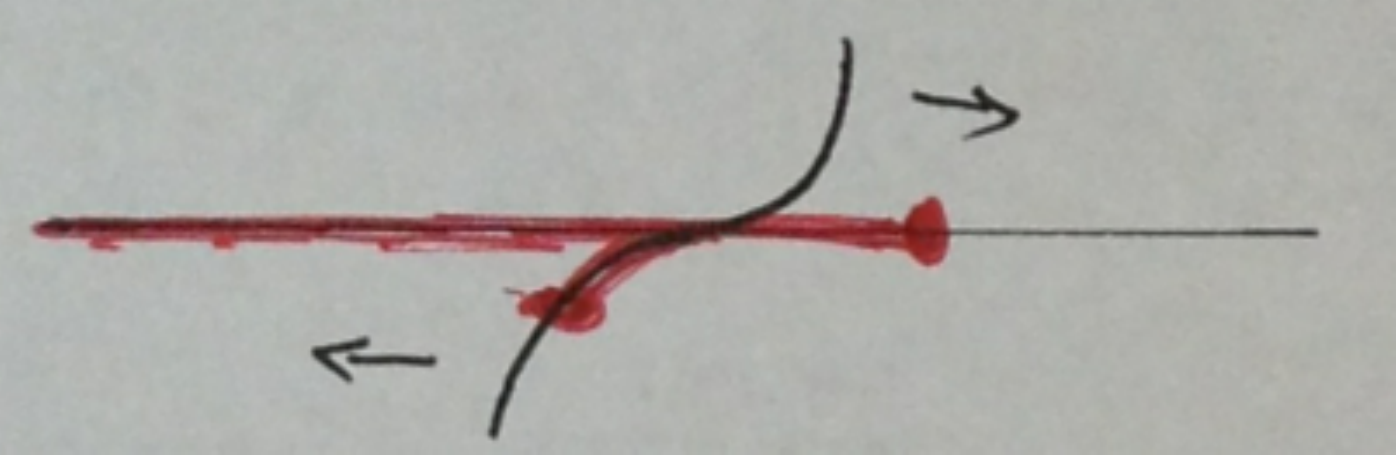}\\\includegraphics[width=140pt]{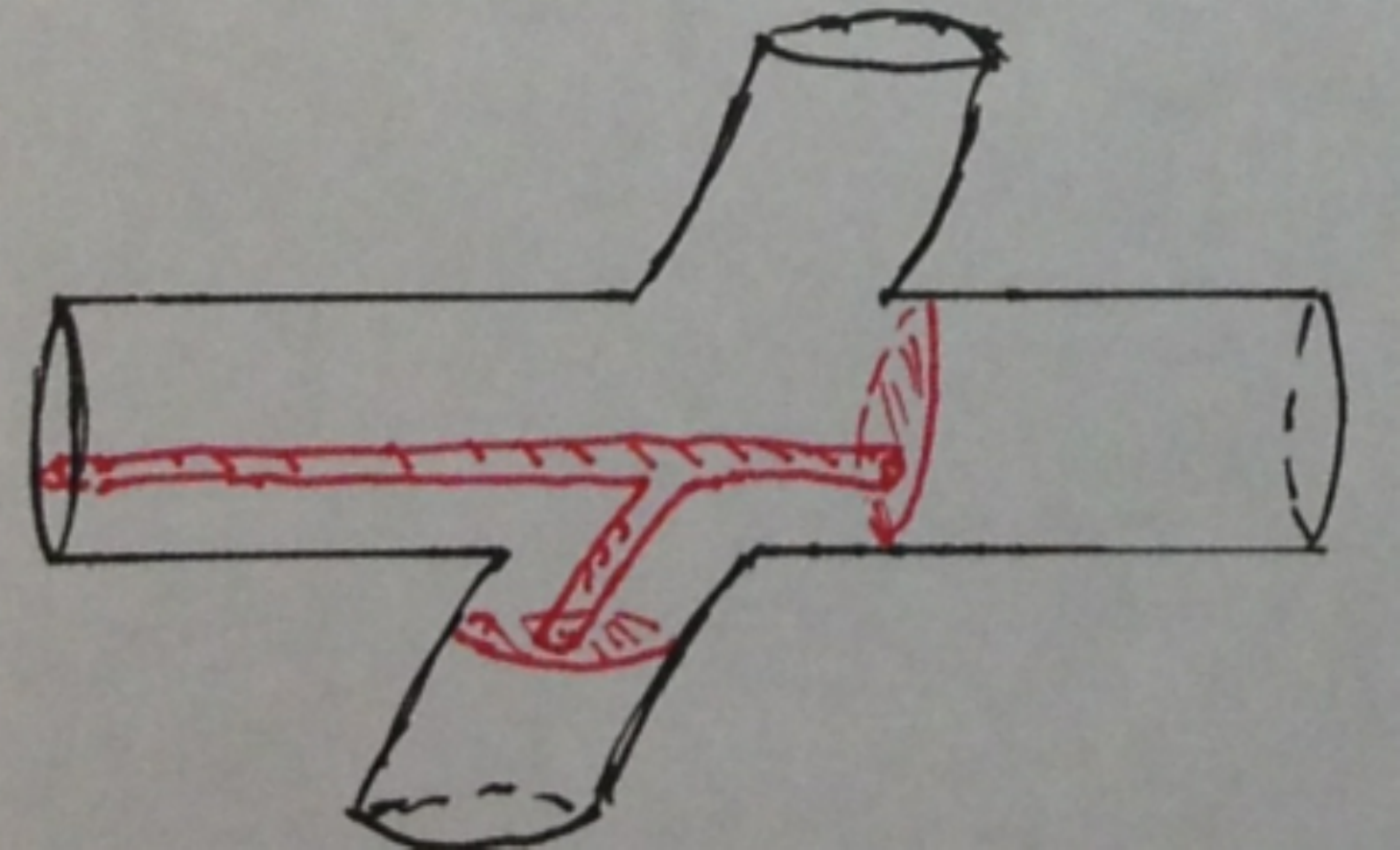}}}
&$\buildrel fold \over \longrightarrow$&
\fbox{\minibox{\includegraphics[width=140pt]{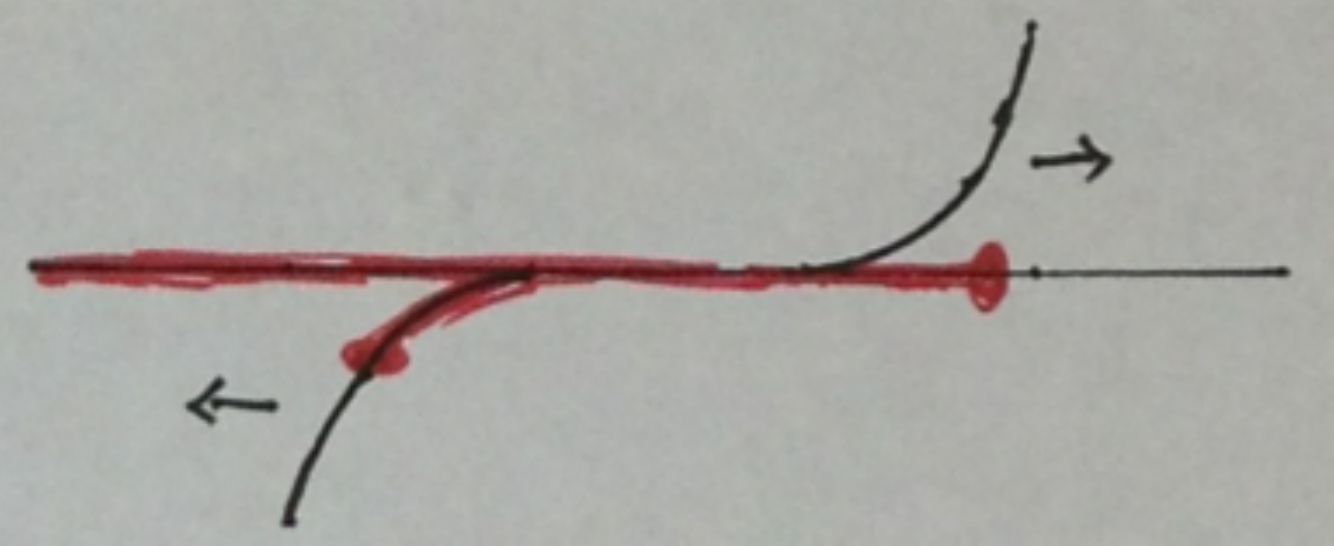}\\\includegraphics[width=140pt]{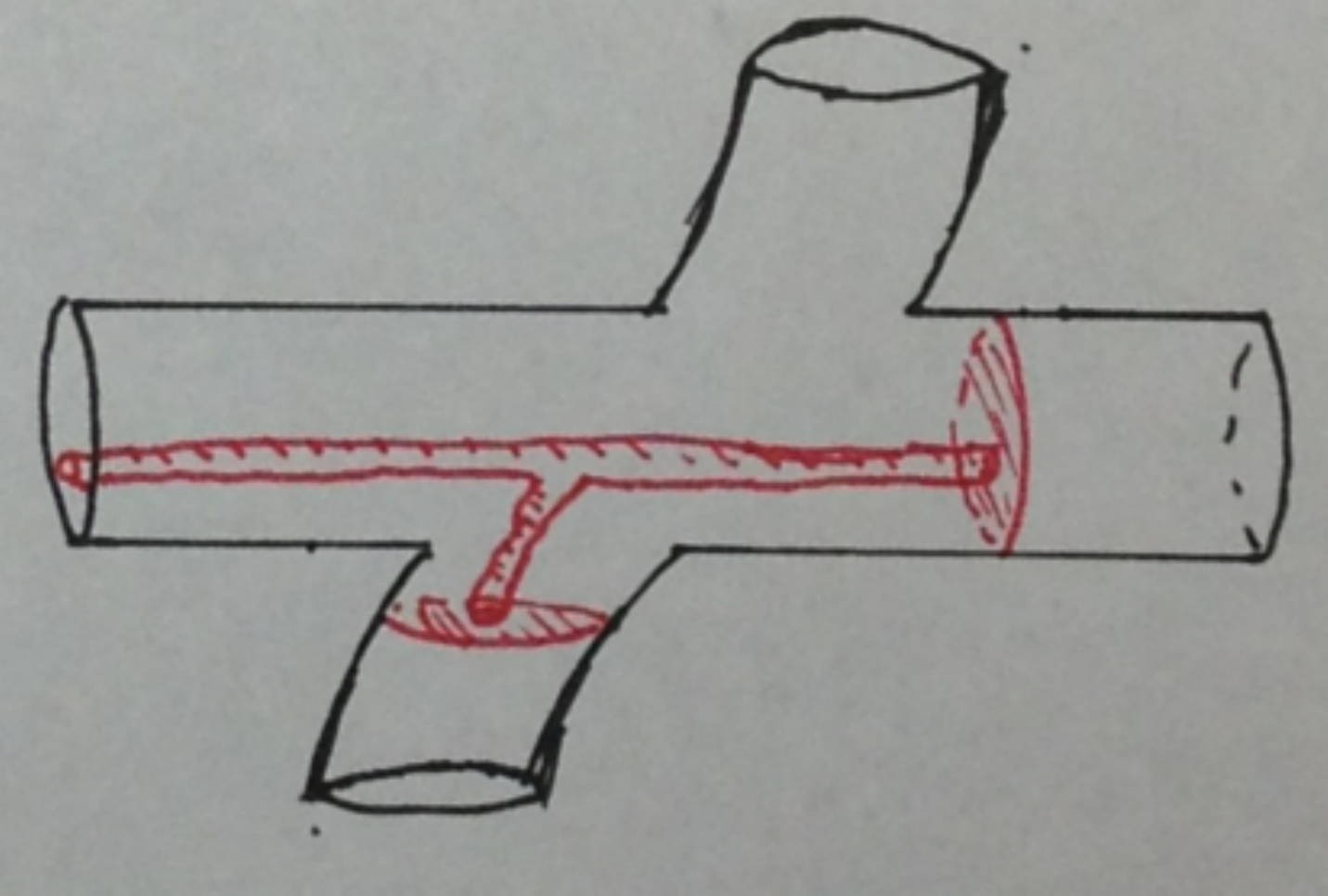}}}
\\
\end{tabular}
\caption{Sphere evolution along a folding path.  Each box shows a portion of the sphere tree with the gates indicated on top and the corresponding sphere in the doubled handlebody (with only one half of the handlebody drawn) on bottom.  As the gates are folded at unit speed, the bud in the first box is exchanged for the buds in the second box, and then the gates are folded normally past the gate collision in the third box.}\label{fig:sphere_evolution}
\end{center}
\end{figure}\todo{redraw}



\section{Sphere Trees as Slices of the Guirardel Core}\label{sec:core}

In this section we point out a useful connection between sphere trees and the Guirardel core via work of Behrstock, Bestvina, and Clay~\cite{behrstock_bc:intersection_numbers10}.  This section is not necessary for the remainder of the paper, so the reader interested only in projection and not in properties of sphere trees can skip it.

It turns out that sphere trees appear in the literature in a different guise.  Sphere trees are closely related to the trees studied by Behrstock, Bestvina, and Clay as slices of the Guirardel core.  The Guirardel core is a way of assigning a canonical CAT(0) geometry to a pair of splittings for groups acting on trees.  It is extremely useful in the study of $Out(F_n)$ and elsewhere, yielding for instance an intersection number between two points in outer space \cite{Guirardel}.  However, as the definition requires some exposition and the core is only tangential to our purposes, we defer its definition and a discussion of the core to the references.  Instead, in this section we focus on the work of Behrstock, Bestvina, and Clay.

Let $T_0 \in \cv$ be an arbitrary marked metric tree with quotient $\Gamma = F_n \backslash T_0$ having fundamental group $F_n$.  Let $e$ denote an edge of $T_0$ which covers a nonseparating edge in $\Gamma$.  These authors describe a construction for finding slices of the Guirardel core $\core(T_0, T)$ that lie above a point $p_e$ on the edge $e$ in $T_0$.  As the core is a subset of $T_0 \times T$, such a slice is a subtree of $T$, which is defined as follows.  Let $\phi \co T \to T_0$ be a map which is equivariant with respect to the actions of $F_n$ on the vertices of the trees and which is injective when restricted to the edges of $T$.  Elements in $\phi^{-1}(p_e)$ are called \emph{buds} (this set was denoted by $\tilde{\Sigma}_e$ in \cite{behrstock_bc:intersection_numbers10}).  Because the map $\phi$ is injective on edges, there is at most one bud per edge of $T$.  We call the convex hull $Y$ of the set of buds (together with the set of buds itself) a \emph{core tree} of $e$ with respect to $T$.

A vertex $v$ of the core tree is \emph{removable} if all but one direction from $v$ is towards an adjacent edge containing a bud on the boundary of the core tree.  If $\phi$ is such that there are no removable vertices, $\phi$ is called \emph{consolidated}.  Behrstock, Bestvina and Clay prove that $\phi$ may be equivariantly perturbed to remove all removable vertices via a process they call \emph{pruning}.  Once we show that core trees and sphere trees are the same object, it will become apparent that the pruning move they define is a special case of the Exchange Move above (specifically, it is the second step of the Sphere Tree Simplification Algorithm).

A core tree $Y$, being the convex hull of a set of non-vertex points, may contain some full edges of $T$ but always contains portions of other edges of $T$.  Let $core(Y)$ denote the subtree of $Y$ consisting only of those full edges of $T$ that are entirely contained in $Y$.

\begin{theorem}\cite{behrstock_bc:intersection_numbers10}
The tree $\emph{core}(Y)$ of a consolidated core tree $Y$ is the slice of the Guirardel core $\core(T, T_0)$ lying above a point $p_e$ on the edge $e$.
\end{theorem}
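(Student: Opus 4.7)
The plan is to show that the statement is a direct reformulation of a result of Behrstock-Bestvina-Clay after setting up a dictionary between the terminology of Section \ref{sec:core} and the terminology in \cite{behrstock_bc:intersection_numbers10}. The work is therefore primarily definitional rather than genuinely new.

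First I would unpack the definition of the slice of the Guirardel core over $p_e$. For equivariant $\BR$-trees $T$ and $T_0$, the Guirardel core $\core(T,T_0)$ is a subset of $T\times T_0$ cut out by a heavy/light quadrant condition: $(x,y)\in\core(T,T_0)$ when each of the four quadrants of $(T\setminus\{x\})\times(T_0\setminus\{y\})$ meets an appropriate translate of a fundamental diagonal. Slicing at $y=p_e$ yields a subset of $T$ whose structure comes entirely from how the preimage $\phi^{-1}(p_e)$ distributes inside $T$, so the slice is determined by the bud set together with the combinatorics of the convex hull of its orbits.

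Second I would match the two pieces of terminology. Since $\phi$ is equivariant and injective on edges, each edge of $T$ either misses $p_e$ entirely or meets it at exactly one point, so the set of buds in our sense is literally the set $\tilde\Sigma_e$ used in \cite{behrstock_bc:intersection_numbers10}. A vertex $v$ of $Y$ is removable in our sense precisely when it is a pruning vertex in the Behrstock-Bestvina-Clay sense: at $v$ all but one direction points into an edge ending at a boundary bud. Consequently a map $\phi$ that is consolidated in our sense produces a core tree $Y$ that is already consolidated in the Behrstock-Bestvina-Clay sense, with no pruning left to perform. Once this identification is in place, the operation $Y\mapsto\mathrm{core}(Y)$ of keeping only the full edges of $T$ inside $Y$ is the same on both sides.

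Third I would invoke the theorem from \cite{behrstock_bc:intersection_numbers10} that identifies this subtree with the slice of $\core(T,T_0)$ over $p_e$. The main obstacle I anticipate is not any topological input but bookkeeping: one has to be careful about which factor of $T\times T_0$ plays which role (our $T$ is their second factor, built from sphere systems rather than an arbitrary point of outer space), and one must verify that our Bud Exchange move, restricted to the situation of a removable vertex, is exactly their pruning move and not merely a close cousin. Once the conventions are lined up the theorem follows directly.
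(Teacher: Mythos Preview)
Your reading is correct: this theorem carries no proof in the paper because it is not a result of the paper at all --- it is quoted verbatim as a theorem of \cite{behrstock_bc:intersection_numbers10}, with the paper's preceding paragraphs serving only to set up the dictionary (buds $=\tilde\Sigma_e$, removable vertices, pruning, consolidated, $\mathrm{core}(Y)$) so that the citation makes sense in the present notation. Your proposal to check that the terminology lines up and then invoke the cited result is therefore exactly what the paper does, though you have spelled out the translation in somewhat more detail than the paper itself bothers to.
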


We now show that sphere trees and these core trees are in fact the same concept.

\begin{theorem}
Let $\mA$ be any simple sphere system in $M_n$ with associated dual graph $\Gamma$, and let $T := \tilde\Gamma$.  Let $S$ denote an essential embedded sphere in $M_n$ and let $T_0$ denote the universal cover of the dual graph in $M_n$ associated to any simple sphere system containing $S$.  There exists a consolidated sphere tree for $S$ with respect to $\mA$ that is a core tree for the edge associated to $S$ with respect to $T_0$.
\end{theorem}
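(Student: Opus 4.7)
The plan is to recognize that both sphere trees and consolidated core trees arise naturally from the same geometric data, namely the intersection pattern between the sphere system $\mathcal{A}$ and a simple sphere system $\mathcal{A}'$ containing $S$. I will produce a specific equivariant map $\phi\colon T \to T_0$, identify its preimages of a midpoint of $e$ with the buds of a sphere tree for $S$, and verify that the two consolidation procedures agree.

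First, by Theorem~\ref{thm:Hatchernormalform} I may isotope so that $\mathcal{A}$ and $\mathcal{A}'$ are in Hatcher normal form with respect to each other. Lifting to $\tilde{M}_n$, define $\phi\colon T \to T_0$ geometrically as follows: send each vertex of $T$ (a component of $\tilde{M}_n - \tilde{\mathcal{A}}$) to the vertex of $T_0$ corresponding to one chosen sub-piece obtained when that component is cut by $\tilde{\mathcal{A}}'$, and extend linearly across each edge of $T$ so that the image path traces, in order, the spheres of $\tilde{\mathcal{A}}'$ that the corresponding sphere of $\tilde{\mathcal{A}}$ crosses. Extend equivariantly. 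Minimality of intersection ensures that $\phi$ is injective on edges.

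Second, I identify the buds. A preimage point of $p_e$ (the midpoint of the edge $e$ of $T_0$ dual to the chosen lift $\tilde{S}$) under $\phi$ lies on some edge of $T$ dual to a sphere $\tilde{\sigma} \in \tilde{\mathcal{A}}$, and by construction such a preimage records a single intersection circle $\tilde{\sigma} \cap \tilde{S}$. On the other hand, when $S$ is put in tree form with respect to $\mathcal{A}$ as in Section~\ref{sec:spheretreesfromspheres}, the buds of the resulting sphere tree $T_S$ are produced by projecting those same intersection-circle positions on $\tilde{S}$ under the singular fibration $\tilde{M}_n \to T$. Hence the two sets of marked points coincide in $T$, and so do their convex hulls.

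Third, I check that the notions of consolidation match. Since $\phi$ is injective on edges, each edge of $T$ carries at most one bud, so the Bud Cancellation hypothesis never applies on the core-tree side. For Bud Exchange, a vertex $v$ in the shared subtree is adjacent to buds in all but one outgoing direction exactly when $v$ is a removable vertex in the core-tree sense, and in this situation the Bud Exchange Move is precisely the Behrstock--Bestvina--Clay pruning operation. Therefore the consolidated sphere tree agrees, as a subtree of $T$ with marked buds, with the consolidated core subtree obtained from $\phi$. The main obstacle is the bookkeeping in the second step: verifying that $\phi$, lifted from a not-canonically-defined map $\Gamma \to \Gamma_0$, really does send $\phi^{-1}(p_e)$ bijectively onto the projections of intersection circles of $\tilde{\mathcal{A}}$ with $\tilde{S}$. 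Having both sphere systems in simultaneous Hatcher normal form is what makes this accounting possible, since it forces each edge of $T$ to map to an embedded edge-path in $T_0$ whose crossings with $e$ correspond exactly to the intersection circles on the corresponding sphere of $\tilde{\mathcal{A}}$.
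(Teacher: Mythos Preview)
Your approach has a genuine gap in the second step. The claim that the buds of the sphere tree $T_S$ built in Section~\ref{sec:spheretreesfromspheres} are the projections of the intersection circles $\tilde S\cap\tilde{\mathcal A}$ is not correct. A small example already shows this: if $S$ happens to be disjoint from $\mathcal A$, there are no intersection circles at all, yet the sphere tree for $S$ certainly has buds (for instance a single bud on the edge of $T$ parallel to $S$, or after a Bud Exchange two buds, etc.). More generally, in the tree-form construction the buds are boundary-parallel spheres recording, for each piece $P\subset M_{0,k}$, which of the \emph{unvisited} boundary components of $M_{0,k}$ lie on the ``inside'' of $P$; they are not placed at the circles of $S\cap\mathcal A$. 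Consequently your identification $\phi^{-1}(p_e)=\{\text{buds of }T_S\}$ fails, and the rest of the argument, which depends on this identification, does not go through. (There is also a related issue in step~3: since your $\phi$ is injective on edges you conclude Bud Cancellation never applies, but the sphere tree built in Section~\ref{sec:spheretreesfromspheres} can have several buds on a single edge before consolidation, which already shows the two bud sets cannot agree as stated.)

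The paper's proof avoids this bookkeeping entirely by taking a different route. It fixes an optimal map $\phi\colon T\to T_0$, starts with the obvious sphere tree for $S$ in $T_0$ --- a single bud $p$ on the edge $e$ --- and then \emph{runs the evolution of sphere trees backwards} along the folding path for $\phi$. The rules of evolution (in particular Bud Exchange) are reversible, and unwinding them one checks that the buds of the resulting consolidated sphere tree in $T$ can be taken to be exactly $\phi^{-1}(p)$. This directly exhibits a consolidated sphere tree which is, by definition, a core tree for $\phi$. The point is that the folding-path picture manufactures the right map $\phi$ and the right bud set simultaneously, rather than trying to match two independently constructed objects after the fact.
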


\begin{proof}

Let $\phi \co T \to T_0$ be an optimal map.  Consider the sphere tree for $S$ with respect to $T_0$.  This is just a sphere tree with a single point $p$ that is a bud on an edge of $T_0$ corresponding to $S$.  To construct a sphere tree for $S$ with respect to $T$, we reverse the evolution of this point $p$ along the folding path for $\phi$.  The result must be a sphere tree $T_S$ for $S$ in $T$ that folds (via $\phi$) to the point $p$.  Under this reverse evolution, we will always maintain that the intermediate sphere trees are consolidated.  It is easy to verify that reversing sphere tree evolution (specifically, the Bud Exchange move that evolution relies on) is such that the buds for $T_S$ may be chosen to be the set of preimages of $p$.  This shows that there exists a consolidated sphere tree that coincides with a consolidated core tree.
\end{proof}

\begin{corollary}
The tree $\emph{core}(T_S)$ of a consolidated sphere tree $T_S$ is the slice of the Guirardel core $\core(T, T_0)$ lying above a point on the edge of $T$ corresponding to $S$.
\end{corollary}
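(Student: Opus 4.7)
The plan is to derive the corollary essentially immediately by composing the preceding theorem with the Behrstock--Bestvina--Clay theorem on consolidated core trees. The preceding theorem produces a consolidated sphere tree $T_S$ for $S$ with respect to $\mA$ which is simultaneously a consolidated core tree for the edge $e$ of $T_0$ corresponding to $S$. The BBC theorem, recalled just above, asserts that $\emph{core}(Y)$ of any consolidated core tree $Y$ is precisely the slice of the Guirardel core $\core(T, T_0)$ lying above the chosen point $p_e \in e$. Applying this statement with $Y = T_S$ yields exactly the claim of the corollary for this particular $T_S$.

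If one interprets the statement as applying to every consolidated sphere tree representing $S$, and not only the specific one produced by the preceding theorem, I would additionally show that $\emph{core}(T_S)$ is independent of this choice. Any two consolidated sphere trees for $S$ with respect to $\mA$ are related by a finite sequence of Bud Exchange moves at vertices of $T$, each followed where necessary by a Bud Cancellation in order to preserve consolidation; this freedom matches the freedom to choose different optimal maps $\phi\colon T \to T_0$ whose reverse folding-path evolution produces the differing representatives in the proof of the preceding theorem. A Bud Exchange at a vertex $v$ alters only the placement of buds adjacent to $v$, and crucially neither adds nor removes any full edge of $T$ from the convex hull of the buds, so $\emph{core}$ is preserved under these moves and hence is well-defined on consolidated sphere trees representing $S$.

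The only step requiring care is this invariance of $\emph{core}$ under Bud Exchange at a vertex $v$ that lies in or adjacent to $T_S$: there the convex hull genuinely shifts near $v$, and one must verify by a short case analysis on the gates at $v$ — using the consolidated condition to rule out the extreme configurations (e.g.\ $v$ adjacent to buds in all but at most one direction, all innermost) — that no full edge of $T$ is added to or subtracted from the convex hull. Granting this local check, the corollary follows by combining the BBC identification applied to the representative produced by the preceding theorem with the equality of $\emph{core}$ across all consolidated representatives of $S$.
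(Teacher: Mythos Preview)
Your proposal is correct and follows essentially the same approach as the paper's proof. Both arguments first apply the BBC theorem to the specific consolidated sphere tree produced by the preceding theorem (which is simultaneously a consolidated core tree), and then extend to arbitrary consolidated sphere trees for $S$ by observing that any two such trees differ by a sequence of Bud Exchange and Bud Cancellation moves which, when consolidation is maintained, preserve the $\emph{core}$ subgraph; the paper states this invariance in one sentence while you sketch the local case analysis, but the logical structure is identical.
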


\begin{proof}
This corollary holds for the consolidated sphere tree $T_{core}$ identified in the previous theorem as coinciding with a core tree.  Any other consolidated sphere tree for $S$ differs from $T_{core}$ by a sequence of Bud Exchanges and Bud Cancellations, which correspond to applying different choices of these moves while unfolding the map $\phi$ in the proof of the previous theorem.  When the property of being consolidated is maintained note that Bud Exchange and Bud Cancellation preserve the $core$ subgraph of the sphere trees.
\end{proof}

Recall the procedure for evolution of sphere trees allows us to algorithmically construct a sphere tree $T_S$ for a given sphere $S$ with respect to a given tree $T\in\cv$ as follows.
    \begin{enumerate}
    \item Choose a tree $Y \in \cv$ compatible with $S$ in the sense that a lift of $S$ is parallel to the sphere lying over the midpoint of some edge $e$ of $Y$ in the fibration of $\tilde M_n$ with base space $Y$ (or, in other words, $Y$ corresponds to a splitting of $F_n$ which is a refinement of the splitting of $F_n$ induced by $S$).  Choose an optimal map $\phi \co Y \to T$ inducing a train track structure on $Y$.  Such a map always exists (this follows for instance from Proposition 2.5 of~\cite{bestvina_f:hyperbolicity_of_ff11}).
    \item Let $Y_S$ denote a sphere tree for $S$ with respect to $Y$. Since $Y$ was chosen to be compatible with $S$, $Y_S$ consists of a single bud on the midpoint of any lift of $e$ and no twigs.
    \item Evolve $Y_S$ along the folding path associated to $\phi$.
    \end{enumerate}
By the above theorems, this procedure also allows us to algorithmically construct Behrstock-Bestvina-Clay core trees as well as slices of the Guirardel core by using folding paths.

\section{Bounded Geodesic Image theorem}\label{sec:maintheorem}

We are now ready to prove our main theorem, the Bounded Geodesic Image theorem.


\begin{theorem}[Bounded Geodesic Image]\label{thm:boundedgeodesicimage}  Let $S \subset Y\subset M_n$ be an essential nonseparating embedded sphere in a submanifold $Y$ of the doubled handlebody such that $Y$ exhausts $M_n$ and $\SC(Y)$ is hyperbolic.  Let $X := Y - S$.  For any geodesic segment, ray or line $\gamma$ in $\SC(Y)$ such that $\gamma$ does not contain $[S]$, the set $\pi_X(\gamma)$ has uniformly bounded diameter in $\SC(X)$.
\end{theorem}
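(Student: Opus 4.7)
The plan is to follow the outline sketched in the introduction, exploiting hyperbolicity of $\SC(Y)$ to reduce to terse folding paths terminating at $[S]$, then tracking how sphere trees with respect to a reference tree for $S$ evolve in order to extract a common innermost bud whose $\pi_X$-image gives a common element.

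First, I would use $\delta$-hyperbolicity of $\SC(Y)$ to reduce the problem to terse folding paths. Given two vertices $[A], [B]$ on $\gamma$, form a triangle with $[S]$; by $\delta$-thinness, any vertex $[C] \in \gamma$ lies uniformly close to a vertex on one of the two sides to $[S]$. By Lemma \ref{lem:terse_paths_project_to_geodesics}, each side is uniformly close in $\SC(Y)$ to the projection of a terse folding path in $\cv$ terminating at a tree whose associated sphere system contains $S$. Combined with the Lipschitz property from Proposition \ref{prop:properties}, bounding $\pi_X(\gamma)$ reduces to uniformly bounding $\pi_X$ on the projection of such a terse folding path, since a common projection point on each of the two half-geodesics to $[S]$ suffices to bound the total $\pi_X$-diameter of $\gamma$ by the triangle inequality in $\SC(X)$.

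Second, I would fix as reference tree $T^*$ the universal cover of the dual graph of the simple sphere system $\{S\}$ in $M_n$. Since $S$ is nonseparating and $X$ exhausts $M_n$, this quotient graph has one vertex representing $X$ and one loop representing $S$, so $T^*$ has a single $F_n$-orbit of vertices and of edges. For any sphere $C$, Section \ref{sec:spheretreesfromspheres} and Theorem \ref{thm:Hatchernormalform} let me form a consolidated sphere tree $T^*_C$ of $C$ with respect to $T^*$; its buds adjacent to vertices of $T^*$ correspond bijectively (via the projection $M_n \to \Gamma_S$) to the isotopy classes of innermost components of $C \cap X$ -- that is, to elements of $\pi_X([C])$.

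Third, I would analyze how sphere trees evolve along a terse folding path $(T_t)_{t\in[\alpha,\omega]}$ with $T_\omega$ representing (a sphere system containing) $S$. At time $\omega$, the sphere tree $(T_\omega)_{C_\omega}$ already sits in $T^*$ and determines a distinguished bud adjacent to a specific $F_n$-orbit of vertices, namely the one arising from the final stage of folding onto the $S$-edge. Pulling earlier spheres $C_t$ backwards via the fold maps $\phi_{t\omega}$ and applying Section \ref{sec:evolving} -- i.e.\ reversing the Bud Exchange and Bud Cancellation moves at each vertex-collision time -- I would show that the sphere tree $(T^*)_{C_t}$ contains that same innermost bud, and hence that the same isotopy class of disk in $X$ occurs as an element of $\pi_X([C_t])$ for every $t$. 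This gives a single element of $\DS(X)$ common to $\pi_X([C_t])$ for all $t$ along the projected terse path, and therefore uniformly bounds $\pi_X$ there.

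The hard step is the third one: verifying that evolution along the folding path genuinely preserves the distinguished innermost bud in the reference sphere tree, and that Bud Exchange moves performed when vertices collide do not displace it nor create auxiliary buds that push the projection far away. Concretely, this amounts to checking that whenever a collision at a vertex $v \in T_t$ forces a Bud Exchange before the collision time, the corresponding bud adjacent to the chosen $F_n$-orbit of vertices in $T^*$ is either unaffected or replaced by a bud still in the same $\pi_X$-orbit, and that the reconsolidation step after the collision changes the bud set only via Cancellations of pairs that do not alter the distinguished innermost bud. Once this invariance is established, the three-step reduction completes the proof.
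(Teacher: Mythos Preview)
Your first step (the $\delta$-hyperbolicity reduction to terse folding paths ending at $[S]$) matches the paper. The rest diverges, and there is a genuine gap.

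First a technical point: the reference tree $T^*$ you propose is the Bass--Serre tree of the single splitting $\{S\}$, hence not a point of $\cv$ (its vertex stabilizers are conjugates of $\pi_1(X)$, which are nontrivial), and the paper's sphere-tree machinery in Section~\ref{sec:spheretrees} is set up only for \emph{simple} sphere systems. The paper instead completes $S$ to a reduced system $\hat S$ and works in $T_{\hat S}\in\cv$, identifying the edge $\hat e$ of $F_n\backslash T_{\hat S}$ corresponding to $S$.

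More seriously, your third step claims that the sphere trees of the \emph{actual} path vertices $C_t$ (with respect to $T^*$) all share a distinguished innermost bud. There is no reason for this. In $T_{t_i}$ the sphere tree of a vertex $\hat S_i$ of the projected path is a single bud on an edge of $T_{t_i}$; after forward evolution to $T_{\hat S}$ these sphere trees can look completely different from one another, and nothing forces a common bud. The paper does \emph{not} project the path vertices $\hat S_i$ directly. Instead it locates, at the last time $N$ before $\hat e$ appears, a gate $\hat\tau_N$ whose folding creates $\hat e$, and traces this gate \emph{backwards} along the folding path to gates $\hat\tau_t$ with a distinguished direction $d_t$. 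It then defines \emph{auxiliary} spheres $S_i$, one near each $\hat S_i$, by a two-bud sphere tree at $v_{t_i}$: one bud in direction $d_{t_i}\in\hat\tau_{t_i}$ and one in a direction $d''_{t_i}\notin\hat\tau_{t_i}$. By construction $S_i$ is disjoint from $\hat S_i$, hence uniformly close in $\SC(Y)$; and because no other gate can fold past $\hat\tau_t$ from outside, forward evolution of the sphere tree of $S_i$ to time $N+\epsilon$ always retains an end bud in direction $d_N$. That engineered common feature is what makes all $\pi_X([S_i])$ coincide (up to distance~$2$).

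So the missing idea is precisely this construction of auxiliary spheres $S_i$ via the backward-tracked gate. Without it, the ``hard step'' you flag is not merely unverified but likely false as stated: the innermost buds of the path vertices themselves need not coincide. Also note that the relevant evolution in Section~\ref{sec:evolving} is of a \emph{fixed} sphere's tree along the folding path (forward in time), not a ``reversal'' of moves applied to varying spheres $C_t$ as you describe.
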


This theorem should be compared to the recent version of the Bounded Geodesic Image theorem of Bestvina and Feighn \cite{bestvina_f:subfactor_projection12}, where in order to get bounded diameter of the projection of geodesic, this geodesic has to avoid the 4-neighborhood of the vertex $[S]$.

\begin{proof}
The case when $\gamma$ is a geodesic line follows easily from the case when $\gamma$ is a finite geodesic path.  Let $[A]$ and $[B]$ denote the endpoints of $\gamma$, and let $\gamma_A$ and $\gamma_B$ be geodesics from $[A]$ to $[S]$ and from $[B]$ to $[S]$, respectively. Since $\SC(Y)$ is hyperbolic, this is a uniformly thin triangle.  Thus, it suffices to prove that $\pi_X(\gamma_A-\{[S]\})$ and $\pi_X(\gamma_B-\{[S]\})$ are uniformly bounded, since $\pi_X$ is Lipschitz (note here we use that $\gamma$ does not contain $[S]$, and that there is at least one point on $\gamma$ that are within distance $\delta + 1$ from \emph{both} $\gamma_A$ and $\gamma_B$, where $\delta$ is the hyperbolicity constant).  Moreover, we only need to show projection is bounded for $\gamma_A-\{[S]\}$, as the argument for $\gamma_B - \{[S]\}$ will be identical.

Consider the geodesic $\gamma_A$.  Let $\hat A$ and $\hat S$ denote reduced sphere systems containing $A$ and $S$, so that, in particular, the corresponding trees $T_{\hat A}$ and $T_{\hat S}$ in the outer space are locally finite. Fix some terse map from $T_{\hat A}$ to $T_{\hat S}$ and let $(T_t)$, $t\in[0,t_{last}]$ be the corresponding terse folding path in the outer space between them. This folding path and what follows below is shown in Figure~\ref{fig:folding_path}.  The folding path projects to a sequence $\{\hat S_i\}_{0\leq i\leq L}$ of simplices in $\SC(Y)$ with $\hat S_0=\hat A$ and $\hat S_L=\hat S$ that is an unparametrized quasigeodesic according to Lemma~\ref{lem:terse_paths_project_to_geodesics}. We can assume that in this sequence each simplex is either a face or coface of each of its neighbors. Thus, to prove $\pi_X(\gamma_A)$ is uniformly bounded, it suffices to find spheres $S_i$ for $i = 0, \dots, L-1$ so that for each $i$, $[S_i]$ is uniformly close to $[\hat S_i]$ in $\SC(Y)$, and $\pi_X(\{[S_i]\}_{i=0}^{L-1})$ is uniformly bounded. Below, we explicitly construct such spheres.  From now on we will repeatedly use the fact that $(T_t)$ is a folding path, and we no longer need terseness (which is only used so show quasigeodicity), so from now on we refer to $(T_t)$ as a folding path.

\begin{figure}[h]
\begin{center}
\includegraphics[width=\textwidth]{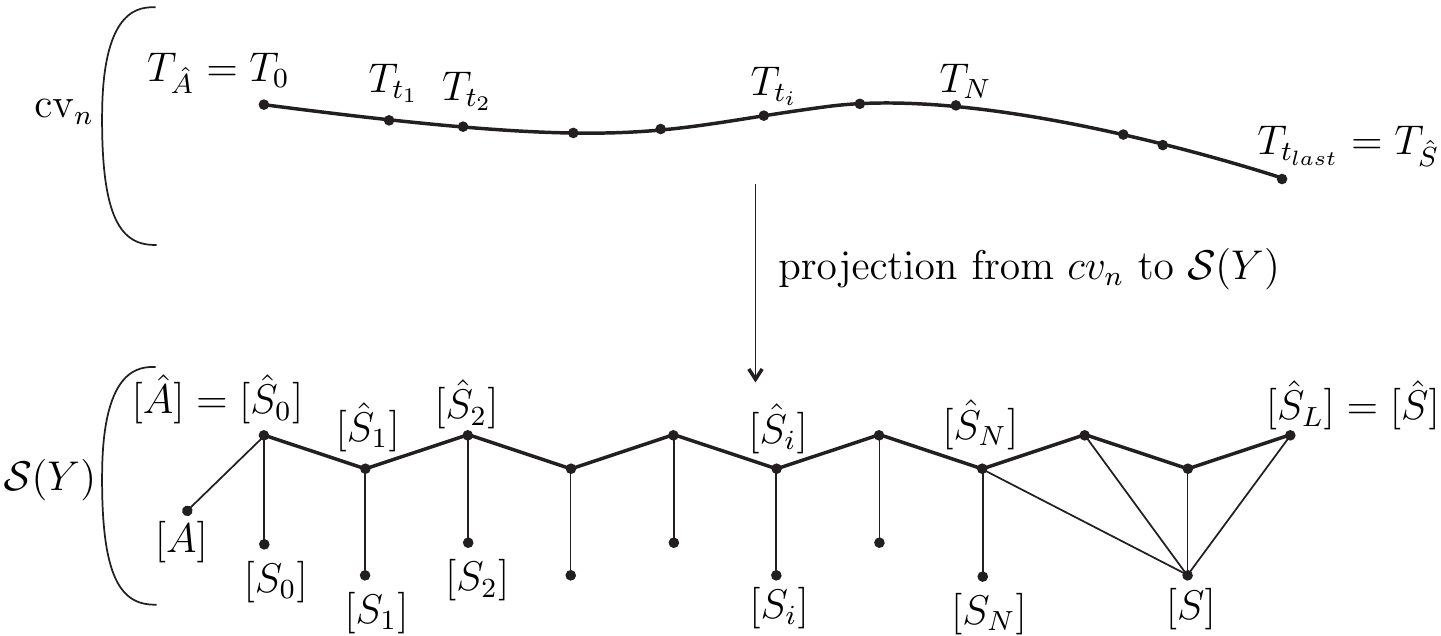}
\caption{Projection of the folding path $(T_t)$,$t\in[0,t_{last}]$ to $\SC(Y)$\label{fig:folding_path}}
\end{center}
\end{figure}

For all $0 \leq i \leq L$ fix $t_i \in [0,t_{last}]$ for which $T_{t_i}=\hat S_i$.

The graph of groups decomposition of the splitting of $F_n$ corresponding to the sphere $S$ has underlying graph $\Gamma_S$ containing exactly one edge $e$.  The graph $\Gamma_{\hat S} := F_n \backslash T_{\hat S}$, given trivial edge and vertex labels, is the graph of groups decomposition of $F_n$ with respect to the sphere system $\hat S$.  Moreover, the fact that $S \subset \hat S$ induces a map between graph of groups decompositions, which has the effect of effect of sending $\Gamma_{\hat S}$ to $\Gamma_S$ by collapsing all but one edge to a point. We denote by $\hat e$ the non-collapsed edge of $\Gamma_{\hat S}$, and by $\tilde e$ the complete preimage of $\hat e$ in $T_{\hat S}$.

We can trace back in time to the last time before the edge $\hat e$ appears in any quotient graph along the folding path $T_t$.  To be precise, we define the `last time before $\hat e$ appears' as the last time $N$ such that there does not exist a point in the interior of $\hat e$ which has a single preimage in the graph $F_n \backslash T_N$.  If $\hat e$ is present in $T_0=T_{\hat A}$, then $\hat S$ and $\hat A$ share a common sphere, so $S$ is distance at most $2$ from $A$, and hence $\pi_X(\gamma_A-\{[S]\})$ is bounded by Proposition \ref{prop:properties}.  It also follows that for each $t\in (N, t_{last}]$ the sphere system corresponding to $T_t$ contains $S$ and thus the projection of $T_t$ to the sphere complex will be to a simplex that has $[S]$ as a face.  The projection of $T_{N}$ to the sphere complex belongs to the link of $[S]$ in $\SC(Y)$.  Thus, as we are trying to prove that the projection of a geodesic beginning at $[S]$ is bounded and such a geodesic contains at most one point in the link of $[S]$, we need not consider the projection of the folding path past time $N$.

We wish to choose the spheres $S_i$ mentioned above, but we must be careful with our choices.  Intuitively, we make these choices by `pulling back' to time $0$ a gate at time $\tau_N$ that creates the edge $\hat e$, and then using this preimage gate to define the $S_i$.  We now formally describe what we mean by this last sentence.

The time $N$ was chosen so that the edge $\hat e$ appears immediately after time $N$.  Thus, there exists some gate $\tau_N$ at a vertex $v_N$ in $\Gamma_N := F_n \backslash T_N$ that contains at least two directions and, after being folded, creates the edge $\hat e$ behind it.  Let $\hat\tau_N$ be an arbitrary lift of $\tau_N$ in $T_N$. Fix any two directions $d_N$ and $d'_N$ in $\hat\tau_N$.  For each $0 \leq t \leq N$ we choose a gate $\hat\tau_t$ with vertex $v_t$ as well as directions $d_t$ and $d'_t$ in $\hat\tau_t$ that evolve under folding to $\hat\tau_N$, $v_N$, $d_N$, and $d'_N$, respectively, and which are chosen as follows.

Fix points $p_N$ and $p'_N$ close to $v_N$ on the edges of $T_N$ in the directions $d_N$ and $d'_N$, respectively.  Let $f: T_0 \to T_N$ be the optimal map corresponding to the folding path.   Let $p_0\in f^{-1}(\{p_N\})$ and $p'_0\in f^{-1}(\{p'_N\})$ be preimages of the points $p_N$ and $p'_N$, and consider the unique geodesic path in $T_0$ from $p_0$ to $p'_0$. This path may contain other points from $f^{-1}(\{p_N,p'_N\})$, but since it starts in $f^{-1}(\{p_N\})$ and ends in $f^{-1}(\{p'_N\})$, there must be some subpath $\gamma_0$ that starts in $f^{-1}(\{p_N\})$, ends in $f^{-1}(\{p'_N\})$, and also does not contain any other points from $f^{-1}(\{p_N,p'_N\})$.  Without loss of generality, assume $p_0$ and $p'_0$ are chosen so that they are the endpoints of $\gamma_0$.  For each $t \in (0,N]$, let $p_t$ and $p'_t$ be the points in $T_t$ which are the images along the folding path of $p_0$ and $p'_0$, respectively, and let $\gamma_t$ be the geodesic path between $p_t$ and $p'_t$.

By construction, after folding $T_0$ to $T_N$, the path $\gamma_0$ maps to a path connecting $p_N$ and $p'_N$ and so contains $\gamma_N$ and hence $v_N$.  Moreover, since there are no points in the interior of $\gamma_0$ which fold to either $p_N$ or $p'_N$, no illegal turn contained in any $\gamma_t$ can fold past the the endpoints of $\gamma_t$ as $\gamma_t$ evolves along the folding path. Therefore, along the folding path, each illegal turn in each $\gamma_t$ will either stop being illegal after some time or will continuously  evolve and persist until time $N$. The process of folding of $\gamma_0$ to $\gamma_N$ is shown in Figure~\ref{fig:gate_preimage}. Since $\{d_N,d'_N\}$ is the only illegal turn contained in $\gamma_N$, and since legal turns stay legal (hence illegal turns stay illegal when time flows backwards), there must be at least one turn contained in $\gamma_0$ that evolves to the turn $\{d_N,d'_N\}\subset \hat\tau_N$. Arbitrarily choose some such turn $\{d_0,d'_0\}$, where $d_0$ and $d'_0$ are directions in $T_0$ that point towards $p_0$ and $p'_0$, respectively.  Let $\hat\tau_0$ denote the gate of $T_0$ containing $\{d_0,d'_0\}$ and let $v_0$ denote the corresponding vertex. In Figure~\ref{fig:gate_preimage} the candidates for the vertex $v_0$ are labeled with dots. The vertex $v_0$ evolves to some vertex $v_t$ along $\gamma_t$.  Let $d_t$ and $d'_t$ denote the directions from $v_t$ which point towards $p_t$ and $p'_t$, respectively.  The turn $\{d_t, d'_t\}$ must be illegal, by the choice of $\{d_0,d'_0\}$, and so there is a gate $\hat\tau_t$ in $T_t$ containing $\{d_t,d'_t\}$.  Let $v_t$ be the vertex corresponding to $\hat\tau_t$.

\begin{figure}[h]
\begin{center}
\includegraphics{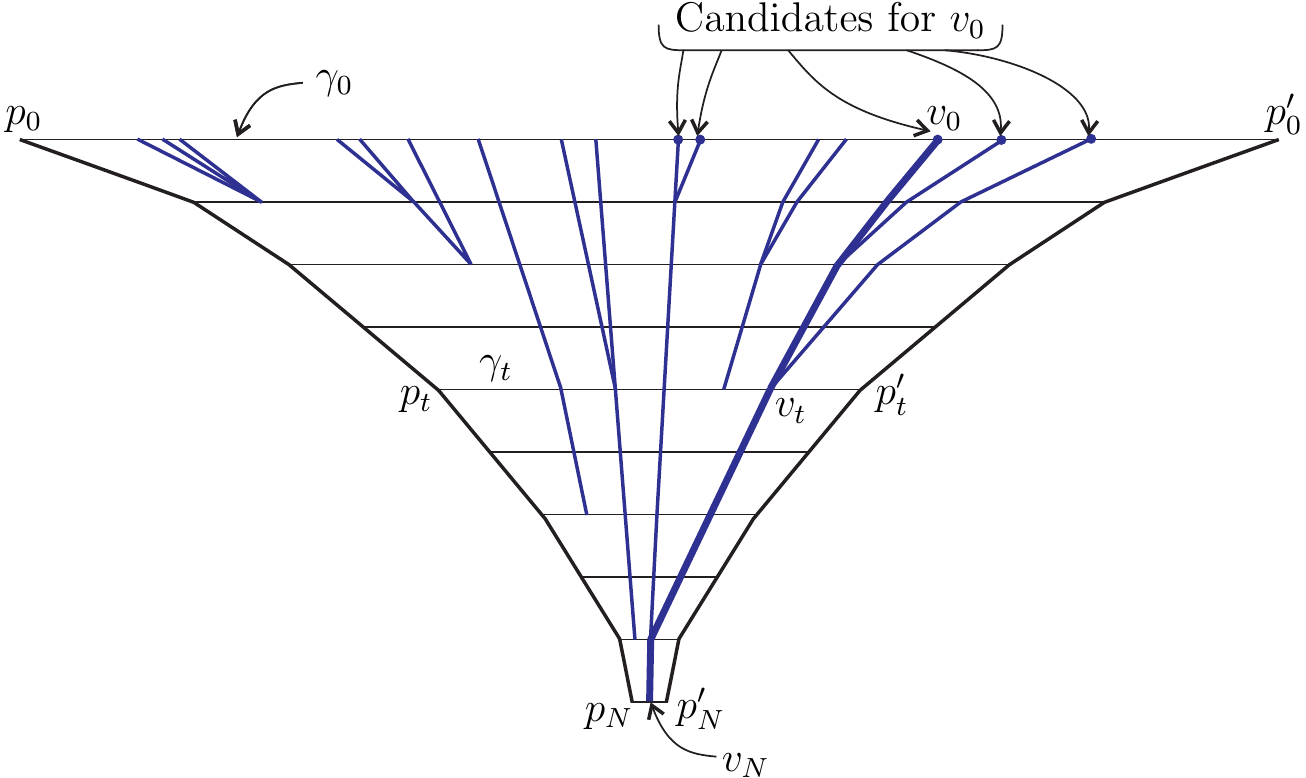}
\caption{Selecting of the preimage $\tau_0$ of the gate $\tau_N$\label{fig:gate_preimage}}
\end{center}
\end{figure}

We are now ready to define the spheres $S_i$.  At time $t_i$, $i=0,\ldots,N$ the vertex at the gate $\hat\tau_{t_i}$ must have at least one other gate besides $\hat\tau_{t_i}$. Pick an arbitrary direction $d''_{t_i} \not\in \hat\tau_{t_i}$ at $v_{t_i}$.  Define the sphere $S_i$ to be the sphere whose sphere tree consists of exactly two buds, one on each of the two edges adjacent to $v_{t_i}$ in the directions $d_{t_i}$ and $d''_{t_i}$.  This sphere tree for $S_i$ is depicted in Figure~\ref{fig:def_of_S_i}. Then by construction $S_i$ is disjoint from (though not contained in) the sphere system $\hat S_i$. Therefore, to finish the proof, it is enough to show now that the projections of all $S_i$ to the link $\SC(X)$ of the vertex $[S]$ coincide.

\begin{figure}[h]
\begin{center}
\includegraphics{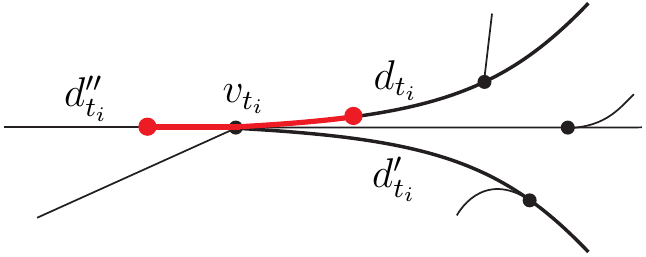}
\caption{Definition of a sphere $S_i$\label{fig:def_of_S_i}}
\end{center}
\end{figure}

One obtains the consolidated sphere tree corresponding to $S_i$ with respect to $\hat S_N$ by folding the tree $T_{t_i}$ along the folding path and following the rules of evolving sphere trees in Section \ref{sec:evolving}. Since along the folding path all illegal turns are folded at the unit speed, other gates can merge with $\hat\tau_t$, but no other gate can fold past the gate $\hat\tau_{t}$ from any direction which is not in $\hat\tau_t$. On the other hand, even if some gate folds past $\hat\tau_{t}$ from a direction which is in $\hat\tau_{t}$, then according to the rules of folding sphere trees it will still be the case that exactly one direction ($d_t$) of $\hat\tau_t$ is not an external direction for the sphere tree, and there is an end bud in the direction $d_t$.  Therefore, at time $N$ this statement still holds:  \textbf{the consolidated sphere tree for $S_i$ viewed in $T_N$ must contain an end bud in the direction $d_N$ and no other buds in any direction contained in $\hat\tau_N$ from $v_N$}.  Note we have no control over what happens (aside from what is prescribed by the rules of folding sphere trees) in the direction $d''_t$, as the rest of the sphere tree for $S_i$ must be in this direction.

\begin{figure}[h]
\begin{center}
\begin{tabular}{c}
\includegraphics{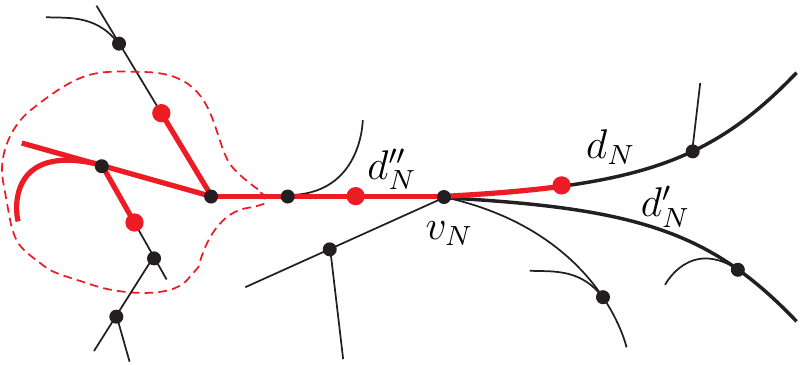}\\
~\hspace{4cm}{\huge$\downarrow$} \textit{(fold for time $\epsilon$)}\\[.1cm]
\includegraphics{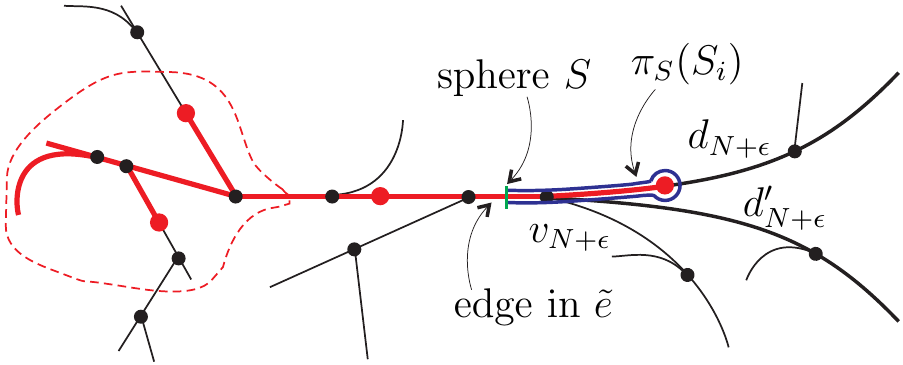}\\
\end{tabular}
\caption{Computing the projection $\pi_S(S_i)$ (Case I)\label{fig:sphere_tree_N1}}
\end{center}
\end{figure}

After folding $\tau_N$ for a small time $\epsilon$ further (so that no other vertex collisions happen and no illegal turn stops being folded) and creating the edge $\hat e$ in $F_n\setminus T_{N+\epsilon}$, the projections of the $S_i$ to $\SC(X)$ have sphere trees which are obtained from the consolidated sphere tree for $S_i$ in $T_{N+\epsilon}$ by chopping along the midpoint of the edges in $\tilde e$.

\begin{figure}[h]
\begin{center}
\begin{tabular}{l}
\includegraphics[width=3in]{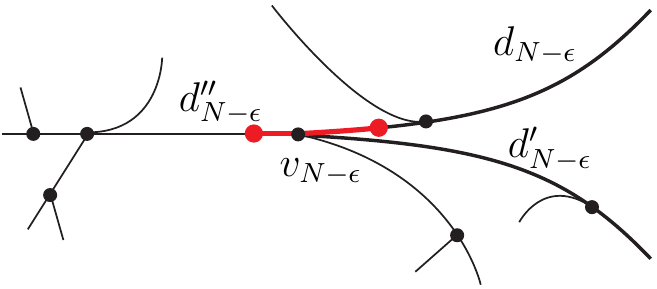}\\
~\hspace{3.2cm}{\huge$\downarrow$} \textit{(fold for time $\epsilon$)}\\[.25cm]
\includegraphics[width=3in]{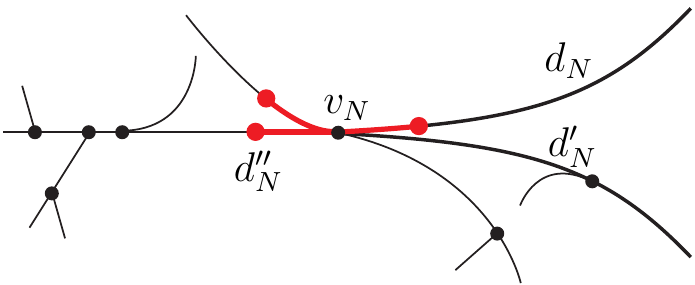}\\
~\hspace{3.2cm}{\huge$\downarrow$} \textit{(fold for time $\epsilon$)}\\[.25cm]
\includegraphics[width=3in]{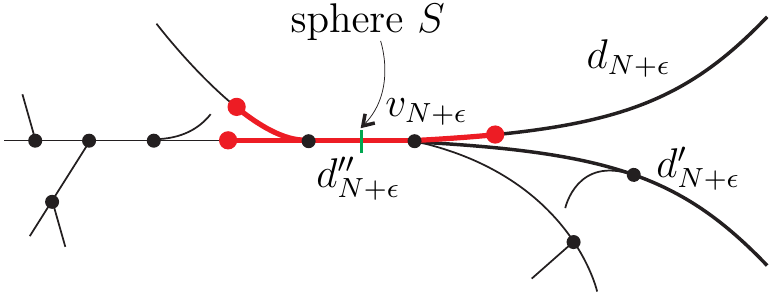}\\
~\hspace{3.2cm}{\huge$\downarrow$} \textit{(Bud Exchange move)}\\[.25cm]
\includegraphics[width=3in]{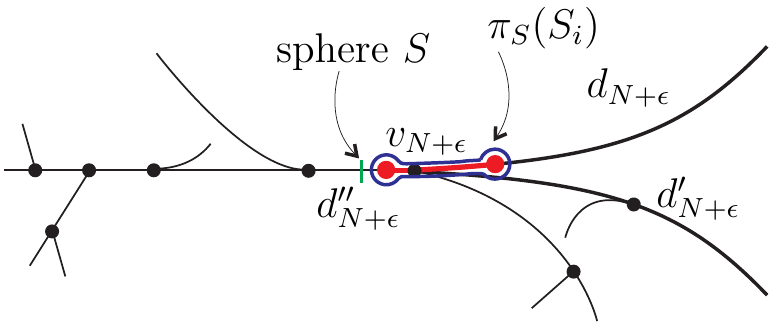}\\
\end{tabular}
\caption{Computing the projection $\pi_S(S_i)$ (Case II)\label{fig:sphere_tree_N2}}
\end{center}
\end{figure}

Since $N$ was chosen to be last time we do not see the edge $\hat e$ in $\Gamma_t$, there is at least one more direction at $v_N$ that is not in the gate $\hat\tau_N$ (otherwise the edge $\hat e$ should have been introduced earlier). Further, since we picked $\epsilon$ in such a way that no vertex collision happens and no illegal turn stops being illegal from time $N$ to time $N+\epsilon$, at time $N+\epsilon$, the vertex $v_{N+\epsilon}$ has precisely two gates, one of which is $\tau_{N+\epsilon}$ and the other one corresponds to a newly created edge in $\tilde e$.

By construction, there are two possibilities for the sphere tree of $S_i$ in $T_{N+\epsilon}$: either it will contain the whole edge adjacent to $v_{N+\epsilon}$ from the orbit $\tilde e$, in which case its projection to $\SC(X)$ is a relative sphere with volume $0$ and a single bud in direction $d_{N+\epsilon}$ (see Figure~\ref{fig:sphere_tree_N1}); or, via Bud Exchange moves, it will be a connected sum of a sphere $S$ and a sphere in the direction $d_{N+\epsilon}$ and will coincide with its projection to $\SC(X)$ (see Figure~\ref{fig:sphere_tree_N2}).

As the projections of all of the $S_i$ are either disjoint, or distance 2 apart, the theorem is proven.

\end{proof}

\begin{corollary}\label{cor:largedistance}
Given the notation of the Bounded Geodesic Image theorem, for any spheres $A, B \subset Y$, if $d_X([A],[B])$ is large then $[S]$ is on every geodesic in $\SC(Y)$ connecting $[A]$ and $[B]$.
\end{corollary}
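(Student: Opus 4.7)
The plan is to prove this corollary as a direct consequence of the Bounded Geodesic Image theorem by contrapositive, leveraging the uniform bound on the diameter of the projection of any geodesic avoiding $[S]$.

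First, I would let $C$ denote the uniform bound on the diameter of $\pi_X(\gamma)$ in $\SC(X)$ guaranteed by Theorem \ref{thm:boundedgeodesicimage}, valid for any geodesic $\gamma$ in $\SC(Y)$ that does not contain $[S]$. I would then argue by contrapositive: suppose there exists a geodesic $\gamma$ from $[A]$ to $[B]$ in $\SC(Y)$ which does not pass through $[S]$. Since the endpoints $[A]$ and $[B]$ lie on $\gamma$, their projections $\pi_X([A])$ and $\pi_X([B])$ are both subsets of $\pi_X(\gamma)$. Applying the Bounded Geodesic Image theorem to $\gamma$ yields
\[
d_X([A],[B]) \leq \diam_{\SC(X)} \pi_X(\gamma) \leq C.
\]
Thus, if $d_X([A],[B]) > C$, no such geodesic can exist, which means every geodesic from $[A]$ to $[B]$ must pass through $[S]$.

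A minor technical point to check is that $\pi_X([A])$ and $\pi_X([B])$ are defined in the first place, so that the statement $d_X([A],[B]) \leq C$ makes sense. Since $A, B \subset Y$ and $Y$ exhausts $M_n$, the hypothesis of Proposition \ref{prop:properties}(\ref{prop:nonempty}) is satisfied (with $X$ exhausting $Y$, which follows since $X = Y - S$), so the projections are nonempty. The rest is just interpreting the phrase ``large'' to mean larger than the uniform constant $C$ coming from the Bounded Geodesic Image theorem. There is no substantial obstacle here — the corollary is essentially a restatement of the theorem via the Lipschitz property of $\pi_X$ applied to the endpoints.
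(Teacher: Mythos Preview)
Your proof is correct and is exactly the intended argument: the paper states this corollary without proof, treating it as immediate from the Bounded Geodesic Image theorem via the contrapositive you give. The only cosmetic point is that the paper's $d_X$ is measured in $\DS(X)$ while Theorem~\ref{thm:boundedgeodesicimage} bounds the diameter in $\SC(X)$, but these are $(1,2)$-quasi-isometric by Proposition~\ref{prop:Stonosep}, so this does not affect the conclusion.
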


\section{Future Directions}\label{sec:futuredirections}

There are many interesting directions and questions arising from this work.

Most pressing is the relationship between our definition of projection and that of Bestvina and Feighn \cite{bestvina_f:subfactor_projection12}.  Do these notions of projection coincide, or do they at least differ by a uniformly bounded amount?  In $\tilde M_n$, the universal cover of the doubled handlebody, the lift of a given sphere system and its associated Bass-Serre tree are dual to each other.  Does this duality lead to a relationship between submanifold projection and subfactor projection?

Bestvina and Feighn also use their projection to verify the axioms of Bestvina, Bromberg, and Fujiwara \cite{BestvinaBrombergFujiwara} to show that $\Out$ acts on a finite product of hyperbolic spaces.  The work contained here proves results similar to these axioms, indicating that this may lead to an independent proof of Bestvina and Feighn's result.

Our definition of projection makes sense for projecting isotopy classes of spheres to sphere systems.  There are various complexes which are quasiisometric to the factor complex and that have as vertices the isotopy classes of spheres.  Many of our results for projection hold for such complexes, but it is not clear that the projection map in this context is still Lipschitz in the sense of Proposition \ref{prop:properties}.  Does submanifold projection satisfy other desirable properties for the factor complex?

For our proof of Bounded Geodesic Image theorem, we needed to use special properties of folding paths.  We know that fold paths are quasigeodesics in the sphere complex \cite{handel_m:hyperbolicity_of_fs12} and in the factor complex \cite{kapovich_r:hyperbolicity_FF12}, and that folding paths are quasigeodesics in the factor complex \cite{bestvina_f:hyperbolicity_of_ff11}.  We suspect that projections of folding paths to the sphere complex are also quasigeodesics, which if true would loosen our restriction of requiring terse paths.  Note that surgery paths also form a coarsely transitive path family of quasigeodesics in the sphere complex \cite{hilion_h:hyperbolicity_FS12}.

An interesting notion to explore is that of the volume of a sphere tree, defined as the metric volume of the underlying core.  The volume of a sphere tree appears to behave nicely under folding paths.  We suspect that the projection of a vertex to a geodesic used to prove hyperbolicity \cite{bestvina_f:hyperbolicity_of_ff11,handel_m:hyperbolicity_of_fs12} in either the case of the splitting complex or the factor complex is uniformly boundedly close to the place along the geodesic where the associated sphere tree has minimal volume.

One application of the Bounded Geodesic Image theorem could be to provide a simple way of showing that the sphere complex and the factor complex are not quasiisometric, as follows.  Every reducible element of $\Out$ acts with bounded orbit on the factor complex.  Does there exist a reducible element that acts with unbounded orbit on the sphere complex?  This could be shown by providing a reducible element and an orbit of points in the sphere complex such that the projection of the orbit to each point in the orbit has large diameter.  This would show via Corollary \ref{cor:largedistance} that the orbit lies on a geodesic line.

Of course, the ultimate goal in defining projection would be to describe a hierarchy-type machine similar to the one developed in~\cite{masur_m:curve_complexII} for the case of mapping class groups, yielding an understanding of distances in $\Out$.

\bibliographystyle{alpha}
\bibliography{outfn}

 \end{document}